\numberwithin{equation}{section} % Numérotation des équations
\newcommand{\R}{\mathbf{R}}
\newcommand{\N}{\mathbf{N}}
\newcommand{\F}{\mathcal{F}}
\newcommand{\Hen}{\mathcal{H}} % H entropy
\newcommand{\Prob}{\mathcal{P}}
\newcommand{\Proba}{\mathcal{P}_\mathrm{adm}} % Proba admissible
\newcommand{\Probin}{\mathcal{P}_\mathrm{in}} % Proba incompressible
\newcommand{\Probbc}{\mathcal{P}_\mathrm{bc}} % Proba boudary conditions
\newcommand{\Leb}{\mathcal{L}} % Lebesgue measure
\newcommand{\C}{\mathcal{C}} % Congestion term
\newcommand{\A}{\mathcal{A}} % Action of a curve
\newcommand{\Afr}{\mathfrak{A}} % Set of indexes for a
\newcommand{\Pbc}{P_{\mathrm{bc}}} % Paramemtrization of the boundary conditions
\newcommand{\vbf}{\mathbf{v}} % v bold
\newcommand{\ddr}{~\mathrm{d}} % d droit pour les intégrales
\newcommand{\dr}{\partial}
\newcommand{\dst}[1]{\displaystyle{#1}}
\newcommand{\bsl}{\backslash}
\newtheorem{theo}{Theorem}[section]
\newtheorem*{theo*}{Theorem}
\newtheorem{prop}[theo]{Proposition}
\newtheorem{crl}[theo]{Corollary}
\newtheorem{lm}[theo]{Lemma}
\newtheorem{defi}[theo]{Definition}
\newtheorem{asmp}{Assumption}
\newcommand{\intervalle}[4]{\mathopen{#1}#2
                            \mathclose{}\mathpunct{},#3
                            \mathclose{#4}}
\newcommand{\intervalleff}[2]{\intervalle{[}{#1}{#2}{]}}
\newcommand{\intervalleof}[2]{\intervalle{(}{#1}{#2}{]}}
\newcommand{\intervallefo}[2]{\intervalle{[}{#1}{#2}{)}}
\newcommand{\intervalleoo}[2]{\intervalle{(}{#1}{#2}{)}}
\begin{document}

\title{Time-convexity of the entropy in the multiphasic formulation of the incompressible Euler equation}

\date{\today}
\author{Hugo Lavenant}
\address{Laboratoire de Math\'ematiques d'Orsay, Univ. Paris-Sud, CNRS, Universit\'e Paris-Saclay, 91405 Orsay Cedex, France}
\email{hugo.lavenant@math.u-psud.fr}

\keywords{Euler equations, Wasserstein space, flow interchange}

\maketitle

\begin{abstract}
We study the multiphasic formulation of the incompressible Euler equation introduced by Brenier: infinitely many phases evolve according to the compressible Euler equation and are coupled through a global incompressibility constraint. In a convex domain, we are able to prove that the entropy, when averaged over all phases, is a convex function of time, a result that was conjectured by Brenier. The novelty in our approach consists in introducing a time-discretization that allows us to import a \emph{flow interchange} inequality previously used by Matthes, McCann and Savar\'e to study first order in time PDE, namely the JKO scheme associated with non-linear parabolic equations.        
\end{abstract}

%\tableofcontents
\section{Introduction and statement of the main result}

Since the idea of Arnold \cite{Arnold1966} to consider the motion of an incompressible and inviscid fluid, described by the Euler equation, as a variational problem, namely a geodesic on the (infinite dimensional) group of orientation and measure-preserving diffeomorphisms (this is formally speaking an instance of the \emph{least action principle}), this variational point of view has turned out to be fruitful. In particular, Brenier introduced relaxations leading to generalized geodesics on the group of measure-preserving maps: translated at a microscopic level, fluid particles are allowed to split and diffuse on the whole space (for a general survey, see for instance \cite{Daneri2012}). We will concentrate in this paper on one of Brenier's model with a flavor of Eulerian point of view introduced in \cite{Brenier1999} (see also \cite[Section 4]{Brenier2003}, \cite[Section 1.5.3]{Daneri2012} and \cite{Ambrosio2009}) which goes as follows.  

There are (possibly infinitely) many phases indexed by a parameter $\alpha$ which belongs to some probability space $(\Afr, \A, \theta)$. At a fixed time $t$, each phase is described by its density $\rho^\alpha_t$ and its velocity field $\vbf^\alpha_t$, which are functions of the position $x$. We assume that all the densities are confined in a fixed bounded domain $\Omega$, and up to a normalization constant $\rho^\alpha_t$ can be seen as a probability measure on $\Omega$. The evolution in time of the phase $\alpha$ is done according to the continuity equation 
\begin{equation}
\label{equation_continuity_equation}
\partial_t \rho_t^\alpha + \nabla \cdot ( \rho^\alpha_t \vbf^\alpha_t ) = 0,
\end{equation}  
where $\nabla \cdot$ stands for the divergence operator. We assume no-flux boundary conditions on $\partial \Omega$, thus the total mass of $\rho^\alpha$ is preserved over time. The different phases are coupled through the incompressibility constraint: at a fixed $t$ the density of all the different phases must sum up to the Lebesgue measure $\Leb$ (restricted to $\Omega$). In other words, for any $t$ we impose that 
\begin{equation}
\label{equation_incompressibility_condition}
\int_\Afr \rho^\alpha_t \ddr \theta(\alpha) = \Leb.
\end{equation} 
Looking at the problem from a variational point of view, we assume that the values of $\rho^\alpha_t$ are fixed for $t= 0$ and $t=1$ and that the trajectories observed are those solving the following variational problem: 
\begin{equation}
\label{equation_variationnal_problem_unformal}
\min \left\{ \int_\Afr \int_0^1 \int_\Omega \frac{1}{2} |\vbf^\alpha_t(x)|^2 \rho^\alpha_t(x) \ddr  x \ddr t \ddr \theta(\alpha) \ :  \ (\rho^\alpha, \vbf^\alpha) \text{ satisfies \eqref{equation_continuity_equation} and \eqref{equation_incompressibility_condition}} \right\}.
\end{equation}
From a physical point of view, the functional which is minimized corresponds to the average (over all phases) of the integral over time of the kinetic energy, namely the global \emph{action} of all the phases. Without the incompressibility constraint, each phase would evolve independently and follow a geodesic in the Wasserstein space joining $\rho^\alpha_0$ to $\rho^\alpha_1$. 

In Brenier's original formulation, the space $(\Afr, \mathcal{A}, \theta)$ is the domain $\Omega$ endowed with the Lebesgue measure $\Leb$. If $h : \Omega \to \Omega$ is a measure-preserving map, "classical" boundary conditions are those where $\rho^\alpha_0$ is the Dirac mass located at $\alpha$ and $\rho^\alpha_1$ is the Dirac mass located at $h(\alpha)$. In a classical solution, each phase $\alpha$ will be of the form $\rho^\alpha_t = \delta_{y^\alpha(t)}$, where $y^\alpha : \intervalleff{0}{1} \to \Omega$ is a curve joining $\alpha$ to $h(\alpha)$. But, even if one starts with "classical" boundary conditions, there are cases where the phase $\alpha$ may split and $\rho^\alpha$ may not be a Dirac mass for any $t \in \intervalleoo{0}{1}$, leading to a "non-classical" solution (for examples of such cases, the reader can consult \cite[Section 6]{Brenier1989} or the detailed study \cite{Bernot2009}).   

\bigskip

With formal considerations (see for instance \cite[Section 4]{Brenier2003}), one can be convinced that for each phase $\alpha$, the optimal velocity field is the gradient of a scalar field $\varphi^\alpha$ (i.e. $\vbf^\alpha_t = \nabla \varphi^\alpha_t$), and that each $\varphi^\alpha$ evolves according to a Hamilton-Jacobi equation 
\begin{equation*}
\partial_t \varphi^\alpha_t + \frac{|\nabla \varphi^\alpha_t|^2}{2} = -  p_t,
\end{equation*} 
with a pressure field $p$ that does not depend on $\alpha$ and that arises from the incompressibility constraint. If we look at the Boltzmann entropy of the phase $\alpha$, a lengthy formal computation leads to 
\begin{equation*}
\frac{\mathrm{d}^2}{\mathrm{d}t^2} \int_\Omega \rho^\alpha_t(x) \ln \rho^\alpha_t(x) \ddr x  = \int_\Omega \left[ \Delta p_t(x)  + |D^2 \varphi^\alpha_t(x)|^2  \right] \rho^\alpha_t(x) \ddr x,
\end{equation*}  
where $|D^2 \varphi^\alpha_t|^2 = \sum_{i,j} ( \partial_{ij} \varphi^\alpha_t)^2$ is the square of the Hilbert-Schmidt norm of the Hessian w.r.t. (with respect to) space of $\varphi^\alpha_t$. Thus, if one defines the averaged entropy $H$ as a function of time by 
\begin{equation*}
H(t) := \int_\Afr \left( \int_\Omega \rho^\alpha_t(x) \ln \rho^\alpha_t(x) \ddr x \right) \ddr \theta(\alpha),
\end{equation*}  
the previous computation leads to 
\begin{equation*}
H''(t)  = \int_\Omega \Delta p_t +  \int_\Afr \int_\Omega  |D^2 \varphi^\alpha_t(x)|^2 \rho^\alpha_t(x) \ddr x \ddr \theta(\alpha) \geqslant \int_{\partial \Omega} \nabla p_t \cdot n
\end{equation*}
where $n$ stands for the outward normal of $\Omega$. At this point, it becomes natural to assume that $\Omega$ is \emph{convex}. Indeed, if this is the case, the acceleration of a fluid particle located on the boundary will be directed toward the interior of $\Omega$ because the particle is constrained to stay in $\Omega$. As the acceleration of the fluid particles is -- at least heuristically -- equal to $-\nabla p$, it is reasonable to expect that $\nabla p \cdot n \geqslant 0$ on $\partial \Omega$. Therefore, at a formal level, assuming the convexity of $\Omega$ leads to $H'' \geqslant 0$, i.e. to the property that the averaged entropy $H$ is a convex function of time. This was remarked and conjectured by Brenier in \cite[section 4]{Brenier2003}, but has received no proof to our knowledge until now. Indeed, the main difficulty lies in the fact that \emph{a priori} the solutions are not regular enough to make the above computation rigorous. The goal of this paper is to give a rigorous statement and to prove this conjecture. The two main statements can informally be stated as follows: 

\begin{theo*}
Let us assume that $\Omega$ is convex. If $(\rho^\alpha, \vbf^\alpha)_{\alpha \in \Afr}$ is the unique solution of the variational problem \eqref{equation_variationnal_problem_unformal} whose total entropy $\int_0^1 H(t) \ddr t$ is finite and minimal compared to all other solutions, then $H$ is a convex function. 
\end{theo*}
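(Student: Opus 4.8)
The key idea, as the abstract announces, is to run a time-discretization of the variational problem so that the convexity of $H$ follows from the discrete analogue of the "flow interchange" argument of Matthes--McCann--Savar\'e. Let me sketch how I would set this up.

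First, I would introduce a discrete-in-time action: fix $N\in\N$ and a time step $\tau = 1/N$, and consider the minimization of
\begin{equation*}
\sum_{k=0}^{N-1} \int_\Afr \frac{W_2^2(\rho^\alpha_{k\tau}, \rho^\alpha_{(k+1)\tau})}{2\tau} \ddr\theta(\alpha)
\end{equation*}
over all families $(\rho^\alpha_{k\tau})_{0\le k\le N}$ that satisfy the incompressibility constraint \eqref{equation_incompressibility_condition} at each discrete time and match the prescribed data at $k=0$ and $k=N$; here $W_2$ denotes the quadratic Wasserstein distance on $\Omega$. This is the natural discretization of \eqref{equation_variationnal_problem_unformal}, and one expects — by a standard $\Gamma$-convergence / compactness argument, together with a geodesic interpolation to reconstruct $(\rho^\alpha,\vbf^\alpha)$ between consecutive nodes — that minimizers of the discrete problem converge (up to extraction) to the entropy-minimal solution of the continuous problem. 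The reason this formulation is useful is that it turns the second-order-in-time Euler dynamics into a sequence of coupled one-step Wasserstein problems, exactly the setting where the flow interchange technique applies.

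The heart of the matter is then to prove a discrete convexity inequality for the entropy along a discrete minimizer, namely
\begin{equation*}
H(k\tau) - 2 H((k+1)\tau) + H((k+2)\tau) \ \geqslant\ 0 \quad\text{for all } 0\le k\le N-2,
\end{equation*}
which passes to the limit and yields convexity of $H$. To get it, I would perturb the discrete minimizer by flowing the middle density $\rho^\alpha_{(k+1)\tau}$ (for every $\alpha$) along the heat flow (with Neumann boundary conditions on the convex domain $\Omega$) for a short time $s$, and compare the action of the perturbed family with that of the minimizer. Competitors are admissible: the heat flow preserves total mass, and — crucially — because heat flow is \emph{linear}, integrating \eqref{equation_incompressibility_condition} against $\theta$ and using Fubini shows that the perturbed middle layer still integrates to the heat-flow evolution of $\Leb$, which is $\Leb$ itself since $\Leb$ (restricted to the convex $\Omega$, with Neumann conditions) is stationary for the heat semigroup. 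Minimality gives that the derivative in $s$ at $s=0$ of the perturbed action is nonnegative; the Benamou--Brenier / Otto-calculus computation of this derivative produces, on one side, $-\frac{1}{\tau}\big(\text{(dissipation of the two adjacent Wasserstein terms)}\big)$ and, on the other, the key identity that the metric slope of $W_2^2(\mu,\cdot)/(2\tau)$ differentiated along the heat flow reproduces (a discretization of) the entropy. Combining the two adjacent steps and summing the resulting inequalities telescopes into the discrete second difference of $H$ being nonnegative, up to controllable error terms. This is precisely the mechanism by which, in the first-order JKO setting, one proves that the entropy decays along the heat-flow JKO scheme; here the second-order structure replaces "decay" by "convexity."

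The main obstacle — and where the convexity of $\Omega$ enters decisively — is the boundary term. In the formal computation recalled in the introduction, $H''$ picks up $\int_{\partial\Omega}\nabla p_t\cdot n$, which one wants to be nonnegative. In the discrete flow-interchange computation this manifests as the need to justify the integration by parts when differentiating the Wasserstein terms along the heat flow, and to control the sign of the resulting boundary contributions. The standard way around this, which I expect to use, is that for a \emph{convex} domain $\Omega$ the heat semigroup with Neumann conditions is a contraction in $W_2$ (equivalently, the Bochner/Bakry--\'Emery inequality holds with the right sign), so the "bad" boundary terms have a favorable sign and can be absorbed; the remaining technical work is to make the differentiation rigorous despite the low regularity of the $\rho^\alpha$ — handled by the usual regularization of the heat flow (the density becomes smooth and bounded away from $0$ for $s>0$, with quantitative estimates uniform in $\alpha$ coming from the ultracontractivity of the semigroup) and a dominated-convergence argument to integrate over $\alpha\in\Afr$. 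Passing the discrete inequality to the limit $N\to\infty$ then requires lower semicontinuity of $H$ along the interpolation and the identification of the limit with the entropy-minimal solution; this is where the hypothesis that we work with the specific solution of minimal total entropy is used, to ensure the limit of discrete minimizers is the one in the statement.
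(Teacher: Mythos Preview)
Your core idea---discretize in time and run a heat-flow flow-interchange at an interior node to obtain discrete convexity of $H$---is correct, and your observation that the heat flow preserves the incompressibility constraint (by linearity and stationarity of $\Leb$) is the right one. At the discrete level the argument is in fact cleaner than you suggest: the Evolution Variational Inequality for the heat flow (which holds on a convex $\Omega$ precisely because the entropy is displacement convex) gives directly
\[
\limsup_{s\to 0}\frac{W_2^2(\Phi_s\rho_{k\tau},\rho_{(k\pm 1)\tau})-W_2^2(\rho_{k\tau},\rho_{(k\pm 1)\tau})}{2s}\ \leqslant\ \Hen(\rho_{(k\pm1)\tau})-\Hen(\rho_{k\tau}),
\]
so after integrating in $\alpha$ one gets $2H(k\tau)\leqslant H((k-1)\tau)+H((k+1)\tau)$ with no ``controllable error terms'' to absorb; there is no need for Otto calculus or explicit boundary integrations by parts.

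The genuine gap is in the passage to the limit. You assert that discrete minimizers converge to the entropy-minimal solution and that ``this is where the hypothesis \ldots\ is used,'' but your discrete functional does not see the entropy at all, so its minimizers have no reason to select the minimal-entropy solution among the (possibly many) solutions of the continuous problem; and even if they did, lower semicontinuity only gives $H_{\bar Q}(t)\leqslant\liminf H^{N}(t)$, which is not enough to transfer convexity. The paper fixes this by adding to the discrete functional a small entropic penalization $\lambda\sum_{k}\tau H_Q(k\tau)$ (and, separately, relaxing incompressibility by an $L^q$ penalty). This $\lambda$-term (i) forces $H_Q(k\tau)<\infty$ at interior times so that the flow-interchange inequality is nontrivial, (ii) selects, in the limit $\lambda\to 0$ taken \emph{after} $q\to\infty$ and $N\to\infty$, the solution with minimal total entropy, and (iii) yields convergence of the \emph{total} entropies $\int_0^1 H^{N,q,\lambda}\to\int_0^1 H_{\bar Q}$, which combined with the l.s.c.\ bound upgrades to a.e.\ pointwise convergence of $H$ and hence to convexity. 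Without this selection mechanism the limit step does not go through, and your sketch does not supply an alternative.
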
  

\begin{theo*}
Let us assume that $\Omega$ is convex and that the boundary terms are such that $H(0)$ and $H(1)$ are finite. Then there exists a solution of the variational problem \eqref{equation_variationnal_problem_unformal} such that $\int_0^1 H(t) \ddr t$ is finite and minimal compared to all other solutions.  
\end{theo*}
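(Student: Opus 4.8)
The plan is to separate a soft selection argument from a hard constructive one. For the soft part, denote by $\mathcal{S}$ the set of minimizers of \eqref{equation_variationnal_problem_unformal}, i.e.\ the admissible families $(\rho^\alpha,\vbf^\alpha)_{\alpha\in\Afr}$ (those satisfying \eqref{equation_continuity_equation}, \eqref{equation_incompressibility_condition} and the prescribed endpoint conditions) whose action is minimal. In the momentum variables $\mathbf{m}^\alpha=\rho^\alpha\vbf^\alpha$ the action is convex and the constraints are affine, so $\mathcal{S}$ is convex; it is nonempty by Brenier's existence theory, and compact in the weak topology underlying that theory (a uniform action bound together with the prescribed phase masses makes the sublevel sets compact). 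On the other hand, writing $\mu(\ddr\alpha,\ddr t,\ddr x)=\rho^\alpha_t(x)\ddr x\ddr t\ddr\theta(\alpha)$, one has $\int_0^1 H(t)\ddr t=\mathrm{Ent}(\mu\,|\,\theta\otimes\ddr t\otimes\Leb)$, the relative entropy of $\mu$ with respect to a fixed finite reference measure; hence the functional $\mathbf{E}:(\rho^\alpha,\vbf^\alpha)\mapsto\int_0^1 H(t)\ddr t$ is convex and weakly lower semicontinuous. Consequently, as soon as we exhibit \emph{one} element of $\mathcal{S}$ at which $\mathbf{E}$ is finite, the direct method applied to $\mathbf{E}$ on the compact set $\mathcal{S}$ produces the asserted solution. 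Everything thus reduces to constructing one action minimizer with finite total entropy.

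For the construction I would use the time-discretization on which the rest of the paper rests. For $N\in\N$, minimize $\mathbf{A}_N\big((\rho^\alpha_k)_k\big)=\sum_{k=0}^{N-1}\frac{N}{2}\int_\Afr W_2^2(\rho^\alpha_k,\rho^\alpha_{k+1})\ddr\theta(\alpha)$ over families $(\rho^\alpha_k)_{0\le k\le N}$ satisfying $\int_\Afr\rho^\alpha_k\ddr\theta=\Leb$ for each $k$ and matching the endpoint data at $k=0,N$; a minimizer exists by the direct method. Extend it to a path on $\intervalleff{0}{1}$ by letting each phase follow, on $\intervalleff{k/N}{(k+1)/N}$, the Wasserstein geodesic between $\rho^\alpha_k$ and $\rho^\alpha_{k+1}$; the associated velocity fields then solve \eqref{equation_continuity_equation} with no-flux boundary conditions, with action exactly $\mathbf{A}_N((\rho^\alpha_k)_k)$. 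Two facts drive the limit $N\to\infty$. First, restricting any admissible continuous competitor to the node times yields an admissible discrete family whose $\mathbf{A}_N$ is, by Cauchy--Schwarz, at most its action; so $\min\mathbf{A}_N$ stays below the continuous infimum. Second, the incompressibility defect on $\intervalleff{k/N}{(k+1)/N}$ is bounded by $\big(\int_\Afr W_2^2(\rho^\alpha_k,\rho^\alpha_{k+1})\ddr\theta\big)^{1/2}=O(N^{-1/2})$ uniformly in $k$; hence, along a subsequence, the discrete-plus-interpolation paths converge to a path that is admissible (incompressibility being recovered in the limit) and, by lower semicontinuity of the action, has action not exceeding the continuous infimum. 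That limit path therefore lies in $\mathcal{S}$.

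It remains to bound $\mathbf{E}$ uniformly along the construction, and this is where the \emph{flow interchange} of Matthes--McCann--Savar\'e enters. Given a discrete minimizer $(\rho^\alpha_k)_k$ and an interior index $k$, replace $\rho^\alpha_k$ by $S_s\rho^\alpha_k$, where $(S_s)_{s\ge0}$ is the Neumann heat semigroup on $\Omega$ applied phase by phase. Since $S_s$ commutes with the $\theta$-average and fixes $\Leb$, the perturbed family is still admissible, so minimality gives $\frac{\mathrm d}{\mathrm ds}\mathbf{A}_N\big|_{s=0^+}\ge 0$. Evaluating this derivative through the first variation of $\tfrac12 W_2^2$, and using that $(S_s)$ is the Wasserstein gradient flow of the entropy together with McCann's displacement convexity of $\rho\mapsto\int_\Omega\rho\ln\rho$ on the \emph{convex} domain $\Omega$ — the very property that replaces the nonnegativity of the boundary term in the formal computation above — the evolution variational inequality yields the discrete convexity $H((k-1)/N)-2H(k/N)+H((k+1)/N)\ge 0$. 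Hence $H(k/N)\le\max(H(0),H(1))$ for all $k$, uniformly in $N$. On each subinterval, displacement convexity of the entropy along the per-phase Wasserstein geodesics (again using convexity of $\Omega$) bounds $H(t)$ by the larger of the two adjacent nodal values, so $\int_0^1 H(t)\ddr t\le\max(H(0),H(1))$ for every discrete-plus-interpolation path. Weak lower semicontinuity of $\mathbf{E}$ transfers this bound to the limit path, which is therefore an action minimizer with finite total entropy; the soft part then concludes the proof.

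The step I expect to be the main obstacle is the flow interchange on the multiphasic discrete scheme: one must differentiate $\mathbf{A}_N$ and the entropy along the Wasserstein heat flow with estimates uniform in the phase parameter $\alpha$, handle phases whose entropy is a priori infinite (by starting the semigroup at a positive time and letting it tend to $0$), and justify the evolution variational inequality for the Neumann heat flow on $\Omega$ using only convexity of $\Omega$ — all considerably more delicate than in the single-phase JKO setting. The remaining ingredients (compactness and lower semicontinuity in Brenier's weak framework, and the verification that incompressibility is restored as $N\to\infty$) are comparatively routine.
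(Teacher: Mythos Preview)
Your proposal follows essentially the same route as the paper: time-discretize, use the heat flow (as the Wasserstein gradient flow of the entropy) in a flow-interchange argument to establish discrete convexity of the averaged entropy, bound it by the endpoint values, and pass to the limit. Your identification of the key mechanism is correct.

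There is one cosmetic difference and one genuine gap.

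\emph{The cosmetic difference.} The paper's discrete problem carries two extra parameters besides $N$: the incompressibility is relaxed via a penalty $\C_q(m_{k\tau}(Q))=\|m_{k\tau}(Q)\|_{L^q}^q-1$, and an entropic penalty $\lambda\sum_k\tau H_Q(k\tau)$ is added; the limits are taken in the order $q\to\infty$, $N\to\infty$, $\lambda\to 0$. Your hard incompressibility at the discrete times is in fact fine for the flow interchange (the heat flow is linear and fixes Lebesgue, so the constraint is preserved), so the $\C_q$ relaxation is not essential. For selecting the minimal-entropy minimizer, the paper uses the $\lambda$-penalty: as $\lambda\to 0$ the limit $\bar Q$ is automatically the solution of least total entropy. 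Your soft argument (minimize total entropy by the direct method on the compact set of action-minimizers) is a legitimate alternative, though compactness is really set up in the non-parametric framework (probability measures on $\Gamma$) rather than for families $(\rho^\alpha,\vbf^\alpha)$; the translation back to the parametric setting is a separate step the paper carries out in its final section.

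\emph{The genuine gap.} It is tied to the entropic penalty. In the paper, because the discrete functional contains $\lambda\sum_k\tau H_Q(k\tau)$, any discrete minimizer automatically has $H_Q(k\tau)<+\infty$ at every interior node; this is exactly what allows the reverse-Fatou step in the flow interchange (the dominating function is $\Hen(\rho_{(k\pm1)\tau})$, which must be $Q$-integrable). In your scheme there is no such term, and nothing forces the discrete minimizer you pick to have finite averaged entropy at the interior nodes. The single-node flow interchange then yields the inequality $2H(k/N)\le H((k{-}1)/N)+H((k{+}1)/N)$ only when both neighbors already have finite entropy; when they do not, the inequality is vacuous, and one cannot rule out a whole block of interior nodes with $H=+\infty$ (e.g.\ $N=3$, $H(0),H(1)$ finite but $H(1/3)=H(2/3)=+\infty$ is not excluded by your argument). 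Thus the step ``hence $H(k/N)\le\max(H(0),H(1))$ for all $k$'' does not follow as written. The paper avoids this via the $\lambda$-penalty. If you want to keep your simpler discretization, one fix is to first run the heat flow on \emph{all} interior nodes simultaneously: contraction of $W_2$ kills the interior pairs, and the two remaining boundary terms together with \eqref{equation_EVI} give $H(1/N)+H((N{-}1)/N)\le H(0)+H(1)$; iterating on $\{2,\dots,N{-}2\}$, then $\{3,\dots,N{-}3\}$, and so on, yields finiteness at every node, after which your single-node argument goes through.
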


\noindent More precisely, see Theorem \ref{theorem_main_convexity_entropy} and Theorem \ref{theorem_finite_entropy_IF_implies_L1} for the exact assumptions and statements, and Section \ref{section_equivalence_formulation} for the translation in Brenier's parametric setting (see below).

Let us remark that the convexity of the entropy is invisible for classical solutions. Indeed, if $\rho^\alpha_t$ is a Dirac mass for any $\alpha$, then $H(t) = + \infty$. Thus the convexity of the entropy is non trivial only for "strongly" non-classical solutions.  

\bigskip

The strategy to prove the convexity of the entropy goes as follows. For a fixed $\alpha$, we see $t \mapsto \rho^\alpha_t$ as a curve in the space of probability measures on $\Omega$. This space (denoted $\Prob(\Omega)$) can be endowed with the $2$-Wasserstein distance $W_2(\cdot, \cdot)$ coming from optimal transport: the squared distance $W_2^2(\mu, \nu)$ between two measures $\mu$ and $\nu$ is just the optimal transport cost among all transport between $\mu$ and $\nu$. The interest of this distance is that the action $\int_0^1 \int_\Omega \frac{1}{2} |\vbf^\alpha_t(x)|^2 \rho^\alpha_t(x) \ddr x  \ddr t$ of the curve $\rho^\alpha$ (or at least, the minimal value of the action among all velocity field $\vbf^\alpha$ satisfying the continuity equation \eqref{equation_continuity_equation}) can be seen as the integral w.r.t. time of the square of the \emph{metric derivative} of the curve $t \mapsto \rho^\alpha_t$ in the metric space $(\Prob(\Omega), W_2)$. In particular, there appears a natural time-discretization of the action: if $N$ is large enough and $\tau := 1/N$ we expect that 
\begin{equation*}
\inf_{\vbf^\alpha \text{ satisfying } \eqref{equation_continuity_equation}} \  \int_0^1 \int_\Omega \frac{1}{2} |\vbf^\alpha_t(x)|^2 \rho^\alpha_t(x) \ddr x  \ddr t = \int_0^1 \frac{1}{2} \left| \dot{\rho}^\alpha_t \right|^2 \ddr t  \simeq \sum_{k=1}^N \frac{\tau}{2} \left( \frac{W_2( \rho^\alpha_{(k-1) \tau}, \rho^\alpha_{k \tau} )}{\tau} \right)^2.   
\end{equation*} 
At a discrete level, $\rho^\alpha$, which is a curve valued in $\Prob(\Omega)$, is approximated by an element of $\Prob(\Omega)^{N+1}$. The incompressibility constraint will be relaxed in order to allow comparison of the optimizer with any other competitor. If $\C_q : \Prob(\Omega) \to \R$ penalizes (more and more as $q \to + \infty$) the probability measures that are different from the Lebesgue measure, a discretized version of the Euler variational formulation \eqref{equation_variationnal_problem_unformal} might read
\begin{equation*}
\min \left\{ \int_\Afr \left[ \sum_{k=1}^N \frac{\tau}{2} \left( \frac{W_2( \rho^\alpha_{(k-1) \tau}, \rho^\alpha_{k \tau} )}{\tau} \right)^2 \right] \ddr \theta(\alpha) + \sum_{k=0}^N \C_q \left( \int_\Afr \rho^\alpha_{k \tau} \ddr \theta(\alpha) \right) \ : \ \rho^\alpha_0, \rho^\alpha_1 \text{ fixed for } \theta \text{-a.e. } \alpha \right\}.
\end{equation*}
Even though it would be possible, we will not write down the optimality conditions of this discretized problem as they contain much more information than needed for our goal. Instead, if we take the minimizer of the discretized problem, we will let the $k$-th component follow the flow of the heat equation (with no-flux boundary conditions) and use the result as a competitor. The key point is that the heat equation is strongly related to the Wasserstein distance: the heat flow is the gradient flow of the functional entropy $\rho \mapsto \int_\Omega \rho \ln \rho$ in the metric space $(\Prob(\Omega), W_2)$. In particular, and this is called the \emph{Evolution Variational Inequality}, one can estimate the derivative of the Wasserstein distance along the heat flow $\Phi_s$: 
\begin{equation*}
\left. \frac{\ddr}{\ddr s} \frac{W_2^2(\Phi_s \mu, \nu)}{2} \right|_{s = 0} \leqslant \int_\Omega \nu \ln \nu - \int_\Omega \mu \ln \mu. 
\end{equation*}       
This kind of inequality was previously used by Matthes, Mccann and Savaré in \cite{Matthes2009} under the name of \emph{flow interchange} to tackle first order (in time) PDEs (basically discretizations via the JKO scheme of gradient flows). As far as we know, this kind of technique has never been used for second order PDEs like the Euler equation\footnote{Let us precise that this idea is adapted from an ongoing work \cite{Lavenant2017} with Filippo Santambrogio where the same kind of technique is used to provide regularity for solutions of quadratic Mean Field Games.}. With this evolution variational inequality, one can show in a very simple way that the averaged entropy $\int_\Afr \int_\Omega \rho^\alpha_{k \tau} \ln \rho^\alpha_{k \tau} \ddr \theta(\alpha)$ is a (discrete) convex function of $k$. Then, one can expect that the solutions of the discretized problem will converge to those of the original one, and that the convexity of the entropy will be preserved at the limit.

\bigskip

It happens that all the quantities involved do not really depend on the particular dependence of the $\rho^\alpha$ in $\alpha$. Indeed, if one denotes by $\Gamma$ the space of continuous curves valued in the probability measures on $\Omega$ endowed with the Wasserstein distance (in short $\Gamma = C(\intervalleff{0}{1}, \Prob(\Omega))$), everything only depends on the image measure of $\theta$ through the map $\alpha \mapsto \rho^\alpha$. The natural object we are dealing with is therefore a probability measure on $\Gamma$, something that one can call (by analogy with \cite{Bernot2005}) a $W_2$-traffic plan. In a way, the application $\alpha \mapsto \rho^\alpha$ is a parametrization of a $W_2$-traffic plan: that's why we will call Brenier's formulation the parametric one, while we will work in the non parametric setting, dealing directly with probability measures on $\Gamma$. In our setting, most topological properties are easier to handle, and notations are according to us simplified. Even though any probability measure on $\Gamma$ cannot be \emph{a priori} parametrized, we will show that it is the case for the solutions of Euler's variational problem. Therefore, our results can be translated in Brenier's parametric setting. 

\bigskip 

This paper is organized as follows. In Section \ref{section_preliminaries}, we introduce the notations that we will use, we briefly recall some properties of the Wasserstein distance and of the heat equation seen as the gradient flow of the entropy. We state explicitly the variational problem we are interested in and prove the existence of a solution (a result which is known since \cite{Brenier1989}). We give a rigorous statement of the theorems that we prove in the next two sections. Section \ref{section_discrete problem} introduces the discrete problem and proves the convexity of the averaged entropy at the discrete level. This section contains the key ingredient around which all the proof revolves. In Section \ref{section_discrete_to_continuous}, we show that the solutions of the discrete problems converge to the solution of the original one, and that the convexity of the averaged entropy is preserved when the limit is taken. Though lengthy and technical, this section does not contain profound ideas. Finally, Section \ref{section_equivalence_formulation} is devoted to the proof of the equivalence between the parametric and non parametric formulations. 

\section{Notations, preliminary results and existence of a solution to the continuous problem}
\label{section_preliminaries}

If $X$ is a polish space (complete, metric, separable), the space of Borel probability measures on $X$ will be denoted by $\Prob(X)$, and $C(X)$ is the space of continuous and bounded functions on $X$ valued in $\R$. The space $\Prob(X)$ will be endowed with the topology of the weak convergence of measures (i.e. the topology induced by the duality with $C(X)$). 

\bigskip

In all the sequel, we will denote by $\Omega$ a closed bounded \emph{convex} subset of $\R^d$ with non empty interior. In particular, $\Omega$ is compact. In order to avoid normalization constants, we assume that the Lebesgue measure of $\Omega$ is $1$. The Lebesgue measure on $\Omega$, which is therefore a probability measure, will be denoted by $\Leb$. 

\subsection{The Wasserstein space}

The space $\Prob(\Omega)$ of probability measures on $\Omega$ is endowed with the Wasserstein distance: if $\mu$ and $\nu$ are two elements of $\Prob(\Omega)$, the $2$-Wasserstein distance $W_2(\mu, \nu)$ between $\mu$ and $\nu$ is defined by 
\begin{equation}
\label{equation_definition_Wasserstein_distance}
W_2(\mu, \nu) := \sqrt{ \min \left\{ \int_{\Omega \times \Omega} |x-y|^2 \ddr \gamma(x,y) \ :  \ \gamma \in \Prob(\Omega \times \Omega) \text{ and } \pi_0 \# \gamma = \mu, \ \pi_1 \# \gamma = \nu \right\}  }.
\end{equation} 
In the formula above, $\pi_0$ and $\pi_1 : \Omega \times \Omega \to \Omega$ stand for the projections on respectively the first and second component of $\Omega \times \Omega$. If $T : X \to Y$ is a measurable application and $\mu$ is a measure on $X$, then the image measure of $\mu$ by $T$, denoted by $T \# \mu$, is the measure defined on $Y$ by $(T \# \mu)(B) = \mu(T^{-1}(B))$ for any measurable set $B \subset Y$. It can also be defined by 
\begin{equation*}
\int_Y a(y) \ddr (T \# \mu)(y) := \int_X a(T(x)) \ddr \mu(x), 
\end{equation*} 
this identity being valid as soon as $a : Y \to \R$ is an integrable function. 

For general results about optimal transport, the reader might refer to \cite{Villani2003} or \cite{SantambrogioOTAM}. We recall that $W_2$ defines a metric on $\Prob(\Omega)$ that metrizes the weak convergence of measures. Therefore, thanks to Prokhorov's Theorem, the space $(\Prob(\Omega), W_2)$ is a compact metric space. We also recall that $(\Prob(\Omega), W_2)$ is a geodesic space and if $\gamma \in \Prob(\Omega \times \Omega)$ is optimal in formula \eqref{equation_definition_Wasserstein_distance}, then a constant-speed geodesic $\rho : \intervalleff{0}{1} \to \Prob(\Omega)$ joining $\mu$ to $\nu$ is given by $\rho(t) := \pi_t \# \gamma$ with $\pi_t : (x,y) \in \Omega \times \Omega \to (1-t)x + ty \in \Omega$ (remark that the convexity of $\Omega$ is important here). Reciprocally, every constant-speed geodesic is of this form (see \cite[prop. 5.32]{SantambrogioOTAM}). We also recall that $W_2^2 : \Prob(\Omega) \times \Prob(\Omega) \to \R$ is a convex function.    

We will consider the entropy (w.r.t. the Lebesgue measure) functional $\Hen$ on $\Prob(\Omega)$. It is the functional $\Hen : \Prob(\Omega) \to \intervalleff{0}{+ \infty}$ defined by, for any $\mu \in \Prob(\Omega)$, 
\begin{equation}
\label{definition_entropy}
\Hen(\mu) := \begin{cases}
\dst{\int_\Omega \mu(x) \ln(\mu(x)) \ddr x} & \text{if } \mu \text{ is absolutely continuous w.r.t. } \Leb, \\
+ \infty & \text{else}.
\end{cases} 
\end{equation} 
The fact that $\Hen \geqslant 0$ on $\Prob(\Omega)$ is a consequence of Jensen's inequality and of the normalization $\Leb(\Omega) = 1$. We will also deal with an other functional on $\Prob(\Omega)$ that we will use to penalize the concentrated measures, namely the $q$-th power of the density. More precisely, if $q > 1$, we denote by $\C_q : \Prob(\Omega) \to \intervalleff{0}{+ \infty}$ the congestion functional defined by, for any $\mu \in \Prob(\Omega)$, 
\begin{equation}
\label{equation_definition_power}
\C_q(\mu) := \begin{cases}
\dst{\int_\Omega \mu(x)^q \ddr x - 1 }  & \text{if } \mu \text{ is absolutely continuous w.r.t. } \Leb, \\
+ \infty & \text{else}.
\end{cases} 
\end{equation} 
Again, thanks to Jensen's inequality, we see that $\C_q(\mu) \geqslant 0$ with equality if and only if $\mu = \Leb$. We recall that a functional is geodesically convex on $(\Prob(\Omega), W_2)$ if for any two given probability measures, there exists a constant-speed geodesic connecting these two measures along which the functional is convex. A functional will be called convex if it is so w.r.t. the usual affine structure on $\Prob(\Omega)$. Well known facts about $\Hen$ and $\C_q$ are summarized in the following proposition (see \cite[chap. 7]{SantambrogioOTAM}).    

\begin{prop}
\label{proposition_proprties_H_Cp}
For any $q > 1$, the functionals $\Hen$ and $\C_q$ are l.s.c. (lower semi-continuous), strictly convex and geodesically convex on $(\Prob(\Omega), W_2)$. 
\end{prop}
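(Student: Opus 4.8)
The plan is to treat $\Hen$ and $\C_q$ simultaneously as internal energy functionals $\mathcal{E}_F(\mu) := \int_\Omega F(\mu(x)) \ddr x$ (set equal to $+ \infty$ when $\mu$ is not absolutely continuous w.r.t. $\Leb$), with $F(s) = s \ln s$ (and $F(0) := 0$) in the case of $\Hen$, and $F(s) = s^q$ in the case of $\C_q$ --- the additive constant $-1$ appearing in the definition of $\C_q$ being irrelevant for all three properties. In both cases $F$ is continuous on $\intervallefo{0}{+ \infty}$, of class $C^2$ on $\intervalleoo{0}{+ \infty}$ with $F'' > 0$ there ($F''(s) = 1/s$, resp. $F''(s) = q(q-1) s^{q-2}$, positive since $q > 1$), hence convex on $\intervallefo{0}{+ \infty}$ and strictly convex on $\intervalleoo{0}{+ \infty}$; moreover $F$ is bounded below (by $-1/e$, resp. by $0$) and superlinear at infinity, i.e. $F(s)/s \to + \infty$ as $s \to + \infty$. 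All three assertions will be deduced from these elementary features of $F$.

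For lower semi-continuity I would rely on the standard duality identity $\mathcal{E}_F(\mu) = \sup \left\{ \int_\Omega \psi \ddr \mu - \int_\Omega F^*(\psi) \ddr x \ : \ \psi \in C(\Omega) \right\}$, with $F^*$ the Legendre transform of $F$: superlinearity of $F$ is precisely what makes this identity correct and forces the right-hand side to equal $+ \infty$ on measures with a nontrivial singular part. As each functional $\mu \mapsto \int_\Omega \psi \ddr \mu - \int_\Omega F^*(\psi) \ddr x$ is continuous for the weak convergence of measures on the compact space $\Prob(\Omega)$, $\mathcal{E}_F$ is a supremum of continuous functionals and hence l.s.c. (alternatively, one may simply quote \cite[chap. 7]{SantambrogioOTAM}). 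Convexity for the affine structure is then immediate: a finite-energy measure is absolutely continuous, so if $\mu_0, \mu_1$ have densities $f_0, f_1$, the measure $\mu_t := (1-t) \mu_0 + t \mu_1$ has density $(1-t) f_0 + t f_1$, and pointwise convexity of $F$, once integrated, gives $\mathcal{E}_F(\mu_t) \leqslant (1-t) \mathcal{E}_F(\mu_0) + t \mathcal{E}_F(\mu_1)$ (the case where one endpoint has infinite energy being trivial). For strictness, if $\mu_0 \neq \mu_1$ then $\{f_0 \neq f_1\}$ has positive Lebesgue measure and at each of its points at least one of the two densities is positive, so for $t \in \intervalleoo{0}{1}$ the value $(1-t) f_0 + t f_1$ lies strictly between $f_0$ and $f_1$; the only point needing care is when the smaller density vanishes, which is handled by strict convexity of $F$ on $\intervalleoo{0}{+ \infty}$ together with convexity up to the endpoint $0$, so that the pointwise inequality is strict on a set of positive measure, hence strict after integration.

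For geodesic convexity I would invoke McCann's displacement convexity criterion: since $\Omega \subset \R^d$ is convex and the reference measure is $\Leb$, $\mathcal{E}_F$ is convex along $W_2$-geodesics provided $F(0) = 0$ and $s \mapsto s^d F(s^{-d})$ is convex and non-increasing on $\intervalleoo{0}{+ \infty}$. For $\Hen$ this auxiliary function is $-d \ln s$, and for $\C_q$ it is $s^{-d(q-1)}$; both are convex and decreasing (the latter precisely because $q > 1$), so both functionals are geodesically convex. Concretely this rests, along a geodesic $\mu_t = ((1-t)\mathrm{id} + t \nabla \varphi) \# \mu_0$, on the Monge--Amp\`ere change of variables and on the concavity of $t \mapsto [\det((1-t) I + t D^2 \varphi(x))]^{1/d}$ (valid since $D^2 \varphi \geqslant 0$), and it is here that convexity of $\Omega$ is genuinely used: it is what guarantees that the $W_2$-geodesics stay inside $\Omega$ and are of this form. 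I do not expect a serious obstacle --- the statement is classical --- the only things to keep an eye on being the endpoint $s = 0$ in the strict-convexity argument, the correct detection of singular parts in the duality formula for lower semi-continuity, and the fact that strict convexity in the affine sense is a genuinely separate property from geodesic convexity and must be established by hand.
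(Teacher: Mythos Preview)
Your proposal is correct and complete: the three assertions are reduced to elementary properties of the integrand $F$, and you check the relevant hypotheses (superlinearity for the duality representation and hence l.s.c., strict convexity of $F$ on $\intervalleoo{0}{+\infty}$ with care at the endpoint $0$ for strict convexity of the functional, and McCann's condition on $s \mapsto s^d F(s^{-d})$ for displacement convexity) accurately in both cases.

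The paper itself does not give a proof of this proposition at all: it simply records it as a well-known fact and refers the reader to \cite[chap.~7]{SantambrogioOTAM}. Your outline is precisely the standard argument one would find there, so there is no genuine divergence in approach --- you have just unpacked what the paper leaves as a citation. The one remark worth keeping from the paper's surrounding text is its emphasis that the convexity of $\Omega$ is needed for geodesic convexity, which you also identify correctly.
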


\noindent Let us underline that the convexity of $\Omega$ is needed to get the geodesic convexity of $\Hen$ and $\C_q$.

\subsection{Absolutely continuous curves in the Wasserstein space}

If $S$ is a closed subset of $\intervalleff{0}{1}$, $\Gamma_S$ will denote the set of continuous functions on $S$ valued in $\Prob(\Omega)$ (in practice, we will only consider subsets $S$ that have a finite number of points or that are subintervals of $\intervalleff{0}{1}$). In the case where the index $S$ is omitted, it is assumed that $S = \intervalleff{0}{1}$. This space will be equipped with the distance $d$ of the uniform convergence, i.e.
\begin{equation*}
d(\rho^1, \rho^2) := \max_{t \in S} W_2( \rho^1(t), \rho^2(t) ).
\end{equation*}
For any closed subset $S'$ of $S$, the application $e_{S'} : \Gamma_S \to \Gamma_{S'}$ is the restriction operator. In the case where $S'= \{ t \}$ is a singleton, we will use the notation $e_t := e_{\{ t \}}$ and often use the compact writing $\rho_t$ for $e_t(\rho) = \rho(t)$. One can see that $\Gamma_S$ is a polish space, and that it is compact if $S$ contains a finite number of points. 

Following \cite[Definition 1.1.1]{Ambrosio2005}, we give ourselves the following definition. 

\begin{defi}
We say that a curve $\rho \in \Gamma$ is $2$-absolutely continuous if there exists a function $f \in L^2(\intervalleff{0}{1})$ such that, for every $0 \leqslant t \leqslant s \leqslant 1 $,  
\begin{equation*}
W_2(\rho_t, \rho_s) \leqslant \int_t^s f(r) \ddr r.
\end{equation*}
\end{defi}

\noindent The main interest of this notion lies in the two following theorems that we recall. 

\begin{theo}
If $\rho \in \Gamma$ is a $2$-absolutely continuous curve, then the quantity 
\begin{equation*}
|\dot{\rho}_t| := \lim_{h \to 0} \frac{W_2(\rho_{t+h}, \rho_t)}{h}
\end{equation*}
exists and is finite for a.e. $t$. Moreover, 
\begin{equation}
\label{equation_representation_A_sup}
\int_0^1 |\dot{\rho}_t|^2 \ddr t = \sup_{N \geqslant 2} \ \ \sup_{0 \leqslant t_1 < t_2 < \ldots < t_N \leqslant 1} \ \ \sum_{k=2}^{N} \frac{W_2^2(\rho_{t_{k-1}}, \rho_{t_k})}{t_k - t_{k-1}}.
\end{equation}
\end{theo}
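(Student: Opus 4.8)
The plan is to recall the classical argument of \cite[Theorem 1.1.2]{Ambrosio2005}, adapting it to our compact setting $(\Prob(\Omega), W_2)$.

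\emph{Step 1: existence of the metric derivative.} Since $(\Prob(\Omega), W_2)$ is separable, I would fix a countable dense subset $\{\sigma_n\}_{n \in \N}$. For each $n$ the real function $\phi_n(t) := W_2(\rho_t, \sigma_n)$ satisfies $|\phi_n(t) - \phi_n(s)| \leqslant W_2(\rho_t, \rho_s) \leqslant \int_s^t f$ for $s \leqslant t$, hence is absolutely continuous and differentiable a.e.\ with $|\phi_n'| \leqslant f$ a.e. Set $g(t) := \sup_n |\phi_n'(t)|$, a measurable function with $g \leqslant f \in L^2$. Taking $\sigma_n \to \rho_t$ (so that $\phi_n(t) \to 0$ and $\phi_n(s) \to W_2(\rho_s, \rho_t)$) one sees that in fact $W_2(\rho_s, \rho_t) = \sup_n |\phi_n(s) - \phi_n(t)|$ for all $s, t$; combined with $|\phi_n(s) - \phi_n(t)| = |\int_s^t \phi_n'| \leqslant \int_s^t g$ this yields $W_2(\rho_s, \rho_t) \leqslant \int_s^t g(r) \ddr r$ for every $s \leqslant t$. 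At every Lebesgue point $t$ of $g$ this gives $\limsup_{h \to 0} W_2(\rho_{t+h}, \rho_t)/|h| \leqslant g(t)$, while for each fixed $n$ one has $\liminf_{h \to 0} W_2(\rho_{t+h}, \rho_t)/|h| \geqslant |\phi_n'(t)|$ for a.e.\ $t$; taking the supremum over the countably many $n$ gives $\liminf \geqslant g(t)$ a.e. Hence the limit $|\dot\rho_t|$ exists, is finite, and equals $g(t)$ for a.e.\ $t$.

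\emph{Step 2: the inequality $\geqslant$ in \eqref{equation_representation_A_sup}.} For any partition $0 \leqslant t_1 < \cdots < t_N \leqslant 1$, the bound $W_2(\rho_{t_{k-1}}, \rho_{t_k}) \leqslant \int_{t_{k-1}}^{t_k} |\dot\rho_r| \ddr r$ together with Cauchy--Schwarz gives $W_2^2(\rho_{t_{k-1}}, \rho_{t_k})/(t_k - t_{k-1}) \leqslant \int_{t_{k-1}}^{t_k} |\dot\rho_r|^2 \ddr r$; summing over $k$ bounds the right-hand side of \eqref{equation_representation_A_sup} by $\int_0^1 |\dot\rho_r|^2 \ddr r$.

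\emph{Step 3: the inequality $\leqslant$ in \eqref{equation_representation_A_sup}.} Write $S$ for the supremum on the right-hand side. For each $N$ I would introduce the piecewise-constant function $g_N := N \, W_2(\rho_{(k-1)/N}, \rho_{k/N})$ on $\intervalleof{(k-1)/N}{k/N}$; then $\int_0^1 g_N^2 = \sum_{k=1}^N W_2^2(\rho_{(k-1)/N}, \rho_{k/N})/(1/N) \leqslant S$, so $(g_N)$ is bounded in $L^2(\intervalleff{0}{1})$ and, up to a subsequence, converges weakly to some $g_\infty$ with $\|g_\infty\|_{L^2} \leqslant \sqrt S$. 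Fixing $s \leqslant t$ and choosing grid points $a_N \to s$, $b_N \to t$ with $a_N \leqslant b_N$, the triangle inequality gives $W_2(\rho_{a_N}, \rho_{b_N}) \leqslant \int_{a_N}^{b_N} g_N$; letting $N \to \infty$ and using the uniform continuity of $\rho$ on $\intervalleff{0}{1}$ together with the strong $L^2$ convergence $\1_{\intervalleff{a_N}{b_N}} \to \1_{\intervalleff{s}{t}}$ yields $W_2(\rho_s, \rho_t) \leqslant \int_s^t g_\infty$. As in Step 1 this forces $|\dot\rho_t| \leqslant g_\infty(t)$ a.e., whence $\int_0^1 |\dot\rho_t|^2 \ddr t \leqslant \|g_\infty\|_{L^2}^2 \leqslant S$, which closes the proof.

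The only genuinely delicate point is Step 3: extracting from the discrete sums an $L^2$ function that still dominates the metric speed. The weak $L^2$ compactness of $(g_N)$ is immediate from the uniform bound by $S$, but one must verify that the weak limit $g_\infty$ still controls $W_2(\rho_s, \rho_t)$, and this is exactly where the strong convergence of the indicators and the uniform continuity of $\rho$ on the compact interval enter. Everything else is a routine combination of the triangle inequality, Cauchy--Schwarz, and the Lebesgue differentiation theorem.
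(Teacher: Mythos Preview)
Your argument is correct and is essentially the content of the references that the paper cites in lieu of a proof: Step~1 is exactly the proof of \cite[Theorem 1.1.2]{Ambrosio2005}, and Steps~2--3 constitute a standard route to the energy representation formula along the lines of \cite[Theorem 4.1.6]{Ambrosio2003}. The weak-$L^2$ compactness argument in Step~3 is one legitimate way to obtain the lower bound; an equally common variant bypasses it via Fatou's lemma applied to $t \mapsto N^2 W_2^2(\rho_{t+1/N},\rho_t)$ and an averaging over translates of the uniform grid, but the two approaches are interchangeable here.
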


\begin{proof}
The first part is just \cite[Theorem 1.1.2]{Ambrosio2005}. The proof of the representation formula \eqref{equation_representation_A_sup} can easily be obtained by adapting the proof of \cite[Theorem 4.1.6]{Ambrosio2003}.  
\end{proof}

The quantity $|\dot{\rho}_t|$ is called the metric derivative of the curve $\rho$ and heuristically corresponds to the norm of the derivative of $\rho$ at time $t$ in the metric space $(\Prob(\Omega), W_2)$. The link between this metric derivative and the continuity equation is the following (and difficult) theorem, whose proof can be found in \cite[Theorem 8.3.1]{Ambrosio2005} (see also \cite[Theorem 5.14]{SantambrogioOTAM}).  

\begin{theo}
\label{theorem_equivalence_A_WMD}
Let $\rho \in \Gamma$ be a $2$-absolutely continuous curve. Then 
\begin{equation}
\label{equation_kinetic_energy_metric_derivative}
\frac{1}{2} \int_0^1 |\dot{\rho}_t|^2 \ddr t = \min \left\{ \int_0^1 \left( \int_\Omega \frac{1}{2} |\vbf_t|^2 \ddr \rho_t \right) \ddr t \right\},
\end{equation}
where the minimum is taken over all families $(\vbf_t)_{t \in \intervalleff{0}{1}}$ such that $\vbf_t \in L^2(\Omega, \R^d, \rho_t)$ for a.e. $t$ and such that the continuity equation $\dr_t \rho_t + \nabla \cdot (\rho_t \vbf_t) = 0$ with no-flux boundary conditions is satisfied in a weak sense. 
\end{theo}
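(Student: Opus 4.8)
The plan is to establish the two inequalities separately. The inequality ``$\leqslant$'' is an \emph{a priori} bound: any admissible velocity field controls the metric derivative of $\rho$. Given a family $(\vbf_t)_t$ solving $\dr_t \rho_t + \nabla \cdot (\rho_t \vbf_t) = 0$ weakly with $\int_0^1 \|\vbf_t\|^2_{L^2(\rho_t)} \ddr t < + \infty$, I would regularize: mollify in space (and, if needed, in time) by setting $\rho^\varepsilon := \rho * \chi_\varepsilon$, $E^\varepsilon := (\rho \vbf) * \chi_\varepsilon$, and, if one wishes, add a small uniform density to keep $\vbf^\varepsilon := E^\varepsilon / \rho^\varepsilon$ a smooth, non-singular vector field still solving the continuity equation. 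Jensen's inequality applied to the convex function $(a,b) \mapsto |b|^2/a$ gives $\int |\vbf^\varepsilon_t|^2 \rho^\varepsilon_t \leqslant \int |\vbf_t|^2 \ddr \rho_t + o(1)$. The Cauchy--Lipschitz flow of $\vbf^\varepsilon$ transports $\rho^\varepsilon_s$ to $\rho^\varepsilon_t$ along curves of controlled length, so $W_2(\rho^\varepsilon_s, \rho^\varepsilon_t) \leqslant \int_s^t \|\vbf^\varepsilon_r\|_{L^2(\rho^\varepsilon_r)} \ddr r$; composing with the $1$-Lipschitz projection of $\R^d$ onto the convex set $\Omega$ brings us back to $\Prob(\Omega)$ without increasing $W_2$, and letting $\varepsilon \to 0$ (using weak convergence $\rho^\varepsilon_t \to \rho_t$, which is $W_2$-convergence since $\Omega$ is bounded, and lower semicontinuity of the $L^2$ norm) yields $W_2(\rho_s, \rho_t) \leqslant \int_s^t \|\vbf_r\|_{L^2(\rho_r)} \ddr r$ for all $s \leqslant t$. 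Hence $|\dot\rho_t| \leqslant \|\vbf_t\|_{L^2(\rho_t)}$ for a.e.\ $t$ and $\tfrac 12 \int_0^1 |\dot\rho_t|^2 \ddr t \leqslant \int_0^1 \int_\Omega \tfrac 12 |\vbf_t|^2 \ddr \rho_t \ddr t$.

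For the reverse inequality and the attainment of the minimum, I would proceed by interpolation. For $n \geqslant 1$ and $t^n_k := k/n$, define $\rho^n$ by gluing, on each subinterval $[t^n_{k-1}, t^n_k]$, a constant-speed $W_2$-geodesic from $\rho_{t^n_{k-1}}$ to $\rho_{t^n_k}$; by convexity of $\Omega$ these geodesics remain in $\Prob(\Omega)$, and each carries a velocity field $\vbf^n$ whose action on $[t^n_{k-1}, t^n_k]$ is exactly $\tfrac n2 W_2^2(\rho_{t^n_{k-1}}, \rho_{t^n_k})$. The representation formula \eqref{equation_representation_A_sup} gives $\sum_{k=1}^n n\, W_2^2(\rho_{t^n_{k-1}}, \rho_{t^n_k}) \leqslant \int_0^1 |\dot\rho_t|^2 \ddr t$, so the momenta $E^n := \rho^n \vbf^n$, viewed as $\R^d$-valued measures on $\intervalleff 01 \times \Omega$, are bounded in total variation; after extraction, $\rho^n \to \rho$ uniformly (for $t \in [t^n_{k-1}, t^n_k]$ one has $W_2(\rho^n_t, \rho_t) \leqslant W_2(\rho_{t^n_{k-1}}, \rho_{t^n_k}) + W_2(\rho_{t^n_{k-1}}, \rho_t) \to 0$ by uniform continuity of $\rho$) and $E^n \rightharpoonup E$ weakly-$*$. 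The continuity equation is linear in $(\rho^n, E^n)$, hence passes to the limit, and the Benamou--Brenier functional $(\rho, E) \mapsto \int_0^1 \int_\Omega \tfrac 12 |E|^2 / \rho$, defined through the convex, l.s.c., positively $1$-homogeneous integrand $(a,b) \mapsto |b|^2/(2a)$, is weakly-$*$ lower semicontinuous, so $\int_0^1 \int_\Omega \tfrac 12 |E|^2/\rho \leqslant \liminf_n \sum_k \tfrac n2 W_2^2(\rho_{t^n_{k-1}}, \rho_{t^n_k}) \leqslant \tfrac 12 \int_0^1 |\dot\rho_t|^2 \ddr t$. Finiteness of the left-hand side forces $E \ll \rho$, so $E = \rho \vbf$ for some $\vbf$, which is admissible and realizes the bound; together with the a priori estimate this shows the infimum is attained and equals $\tfrac 12 \int_0^1 |\dot\rho_t|^2 \ddr t$.

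The main obstacle, in my view, is the boundary of $\Omega$ in the a priori estimate: the mollification-and-flow argument is transparent on $\R^d$ but must be coupled with the projection onto $\Omega$ in a way that preserves both the length estimate and the convergence $\rho^\varepsilon_t \to \rho_t$, and controlling the degeneracy of $\rho^\varepsilon$ (so that its flow is well defined) is the technical heart here. A secondary point, requiring the standard lower-semicontinuity toolbox for functionals of measures, is the weak-$*$ l.s.c.\ of the Benamou--Brenier functional together with the identification of the limit of the interpolants with $\rho$ itself rather than with some other curve sharing its values at the dyadic times.
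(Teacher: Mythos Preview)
The paper does not actually prove this theorem: it is stated with the remark that ``the proof can be found in \cite[Theorem 8.3.1]{Ambrosio2005} (see also \cite[Theorem 5.14]{SantambrogioOTAM})'', and no argument is given. Your sketch is essentially the standard proof found in those references --- the $\leqslant$ direction via mollification of $(\rho,\rho\vbf)$, Jensen on $(a,b)\mapsto |b|^2/a$, and the flow/transport estimate, and the $\geqslant$ direction via piecewise-geodesic interpolation and weak-$*$ lower semicontinuity of the Benamou--Brenier functional --- so there is nothing to compare beyond noting that you have supplied what the paper merely cites.
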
    
   
This result shows that if we are only interested in the minimal value taken by the action of the curve (the r.h.s. (right hand side) of \eqref{equation_kinetic_energy_metric_derivative}), we only need to consider the metric derivative of the curve $\rho$ and we can forget the velocity field $\vbf$. Therefore we define the action $A : \Gamma \to \intervalleff{0}{+ \infty}$ by, for any $\rho \in \Gamma$,  
\begin{equation}
\label{equation_defintion_A}
A(\rho) := \begin{cases}
\dst{ \frac{1}{2} \int_0^1 |\dot{\rho}_t|^2 \ddr t} & \text{if } \rho \text{ is } 2-\text{absolutely continuous}, \\
+ \infty & \text{else}.
\end{cases}
\end{equation} 
Some standard but useful properties of this functional are the following. 

\begin{prop}
\label{proposition_lsc_compact_A}
The functional $A$ is convex, l.s.c. and its sublevel sets are compact in $\Gamma$. 
\end{prop}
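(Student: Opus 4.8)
The plan is to prove the three assertions of Proposition \ref{proposition_lsc_compact_A} by exploiting the representation formula \eqref{equation_representation_A_sup} from the previous theorem, which expresses $A(\rho)$ (up to the factor $1/2$) as a supremum over finite partitions of $\intervalleff{0}{1}$ of the quantities $\sum_{k} W_2^2(\rho_{t_{k-1}}, \rho_{t_k})/(t_k - t_{k-1})$. The point is that each such finite sum is, as a function of $\rho \in \Gamma$, both continuous (for the uniform distance $d$) and convex, because $W_2^2$ is itself continuous and convex on $\Prob(\Omega) \times \Prob(\Omega)$ (recalled in the previous subsection), the evaluation maps $e_t : \Gamma \to \Prob(\Omega)$ are continuous and affine, and dividing by the positive constant $t_k - t_{k-1}$ preserves both properties. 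A supremum of continuous convex functions is l.s.c. and convex, so $A$ is convex and l.s.c.; one only needs to check that the formula \eqref{equation_representation_A_sup} still expresses $A$ correctly when $\rho$ is \emph{not} $2$-absolutely continuous, in which case the supremum is $+\infty$ and matches the definition \eqref{equation_defintion_A}. This last verification is the one slightly delicate point: if $\rho$ is not $2$-absolutely continuous one must argue that the finite sums are unbounded, which follows by noting that if the supremum were finite then $\rho$ would be $2$-absolutely continuous (e.g. via the function $f$ built by a standard density/Vitali-covering argument, exactly as in the proof of \cite[Theorem 4.1.6]{Ambrosio2003} already invoked).

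Next I would establish compactness of the sublevel sets. Fix $c \geqslant 0$ and let $K_c := \{ \rho \in \Gamma : A(\rho) \leqslant c \}$. Since $A$ is l.s.c., $K_c$ is closed in $\Gamma$, so by the Arzel\`a--Ascoli theorem it suffices to show that $K_c$ is equicontinuous and pointwise relatively compact. Pointwise relative compactness is automatic: for each fixed $t$, $\{ \rho_t : \rho \in K_c \}$ is contained in $\Prob(\Omega)$, which is compact because $\Omega$ is compact. For equicontinuity, I would use that every $\rho \in K_c$ is $2$-absolutely continuous with $\int_0^1 |\dot{\rho}_t|^2 \ddr t \leqslant 2c$; by the Cauchy--Schwarz inequality this gives the uniform modulus of continuity
\begin{equation*}
W_2(\rho_s, \rho_t) \leqslant \int_s^t |\dot{\rho}_r| \ddr r \leqslant \sqrt{2c}\,\sqrt{|t-s|}
\end{equation*}
for all $0 \leqslant s \leqslant t \leqslant 1$, uniformly over $\rho \in K_c$. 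Hence $K_c$ is $1/2$-H\"older equicontinuous, and Arzel\`a--Ascoli applies: $K_c$ is compact in $(\Gamma, d)$.

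The main obstacle, such as it is, is really the bookkeeping in the first step: making sure the representation formula \eqref{equation_representation_A_sup} is valid as an identity in $\intervalleff{0}{+\infty}$ for \emph{all} $\rho \in \Gamma$ (not only the $2$-absolutely continuous ones), so that $2A(\rho)$ equals that supremum with no exceptional cases. Once that is secured, convexity and lower semicontinuity are immediate from the "sup of affine-precomposed convex continuous functionals" structure, and compactness of sublevels reduces to the uniform H\"older bound plus Arzel\`a--Ascoli. None of these steps requires any new idea beyond what has already been recalled about $(\Prob(\Omega), W_2)$ and the structure of $2$-absolutely continuous curves, which is why the paper labels these properties "standard but useful."
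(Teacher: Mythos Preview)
Your proposal is correct and follows essentially the same approach as the paper: use the representation formula \eqref{equation_representation_A_sup} to write $A$ as a supremum of convex continuous functionals (yielding convexity and lower semicontinuity), then obtain the uniform $1/2$-H\"older bound $W_2(\rho_s,\rho_t)\leqslant\sqrt{2c}\sqrt{|t-s|}$ on each sublevel set and apply Arzel\`a--Ascoli. The paper derives the H\"older bound directly from \eqref{equation_representation_A_sup} (taking the two-point partition $\{s,t\}$) rather than via Cauchy--Schwarz on the metric derivative, and it does not spell out the case of non-$2$-absolutely continuous curves, but these are cosmetic differences.
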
   

\begin{proof}
To prove that $A$ is convex and l.s.c., we rely on the representation formula \eqref{equation_representation_A_sup} which shows that $A$ is the supremum of convex continuous functions. Moreover if $\rho \in \Gamma$ is a curve with finite action and $s < t$, then, again with \eqref{equation_representation_A_sup}, one can see that $W_2(\rho_s, \rho_t) \leqslant \sqrt{2 A(\rho)} \sqrt{t-s}$. This shows that the sublevel sets of $A$ are uniformly equicontinuous, therefore they are relatively compact thanks to Ascoli-Arzela's theorem. 
\end{proof}

\subsection{The heat equation and the Wasserstein space}

Let us denote by $\Phi : \intervallefo{0}{+ \infty} \times \Prob(\Omega) \to \Prob(\Omega)$ the flow of the heat equation with Neumann boundary conditions. In other words, for any $s \geqslant 0$ and any $\mu \in \Prob(\Omega)$, $\Phi_s(\mu) = u(s)$ where $u$ is the solution (in the sense of distributions) of the Cauchy problem 
\begin{equation*}
\begin{cases}
\dst{\frac{\partial u}{\partial t} = \Delta u} & \text{in } \intervalleoo{0}{+ \infty} \times \mathring{\Omega} \\
\nabla u  \cdot n = 0 & \text{on } \intervalleoo{0}{+ \infty} \times \partial \Omega \\
\dst{\lim_{t \to 0} u(t)} = \mu & \text{in } \Prob(\Omega)
\end{cases}.
\end{equation*}
In the equation above, $n$ stands for the outward normal vector to the boundary $\partial \Omega$. As $\Omega$ is convex, it has a Lipschitz boundary, a regularity which is known to be sufficient for this Cauchy problem to be well posed and to admit a unique solution (see for instance \cite[Section 7]{Arendt2002} and \cite{Pierre1982}). Moreover (see \cite[Section 7]{Arendt2002}), a regularizing effect of the heat flow is encoded in the following estimate (with $C$ a constant that depends only on $\Omega$): 
\begin{equation*} 
\forall \mu \in \Prob(\Omega), \ \forall s > 0, \ \| \Phi_s \mu \|_{L^\infty} \leqslant C \left( s^{-d/2} + 1 \right).
\end{equation*}
In particular, for any $s> 0$ there exists a constant $C_s$ such that for any $\mu \in \Prob(\Omega)$, we have $\Hen(\Phi_s \mu) \leqslant C_s$. 

The key point in what follows is that the heat flow can be seen as the gradient flow of the entropy functional $\Hen$ in the metric space $(\Prob(\Omega), W_2)$. That is (and this was remarked first by \cite{Jordan1998}), in a very informal way, $\Phi$ flows in the direction where the entropy $\Hen$ decreases the most. The standard reference about gradient flows in metric spaces is \cite{Ambrosio2005}, one can also look at the survey \cite{Santambrogio2016}. In any case, this seminal point of view explains the three following identities involving the heat flow, the Wasserstein distance, and the entropy. 

\begin{prop}
The Wasserstein distance decreases along the heat flow: if $\mu$ and $\nu \in \Prob(\Omega)$, and $s \geqslant 0$, 
\begin{equation}
\label{equation_heat_flow_decreases_W}
W_2(\Phi_s \mu, \Phi_s \nu) \leqslant W_2(\mu, \nu).
\end{equation} 
Moreover, let $\mu \in \Prob(\Omega)$ with $\Hen(\mu) < + \infty$. Then the curve $s \mapsto \Phi_s \mu$ is $2$-absolutely continuous and the Energy Identity holds: for any $s \geqslant 0$, 
\begin{equation}
\tag{EI}
\label{equation_EI}
\int_0^s |\dot{\Phi_r \mu}|^2 \ddr r = \Hen(\mu) - \Hen(\Phi_s(\mu)). 
\end{equation}
In addition, for any $\mu, \nu \in \Prob(\Omega)$ with $\Hen(\mu) < + \infty$ and any $s \geqslant 0$, the Evolution Variational Inequality holds: 
\begin{equation}
\label{equation_EVI}
\tag{EVI}
\limsup_{h \to 0, \ h>0} \frac{W_2^2(\Phi_{s+h} \mu , \nu) - W_2^2(\Phi_s \mu , \nu)}{2h} \leqslant \Hen(\nu) - \Hen(\Phi_s \mu).
\end{equation}  
\end{prop}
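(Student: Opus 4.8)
The plan is to obtain all three statements from the single fact that $(\Phi_s)_{s\geqslant 0}$ is the gradient flow of the entropy $\Hen$ in the metric space $(\Prob(\Omega), W_2)$, and then to invoke the general theory of gradient flows of geodesically convex functionals on the Wasserstein space as developed in \cite{Ambrosio2005}. First I would check that $\Hen$ satisfies the structural hypotheses of that theory: by Proposition \ref{proposition_proprties_H_Cp} it is proper, l.s.c.\ and geodesically convex on the compact geodesic space $(\Prob(\Omega), W_2)$ — this is exactly where the convexity of $\Omega$ enters, since only then does $\Hen$ satisfy McCann's displacement-convexity condition. The abstract theory (curves of maximal slope / $\mathrm{EVI}_0$-flows, see e.g.\ \cite[Theorem 4.0.4]{Ambrosio2005}) then provides, for every $\mu$ with $\Hen(\mu) < +\infty$, a unique curve $s \mapsto \widetilde\Phi_s\mu$ that is locally $2$-absolutely continuous on $\intervalleoo{0}{+\infty}$, continuous at $s = 0$, and satisfies for all $\nu$ the Evolution Variational Inequality $\tfrac12 \tfrac{\mathrm{d}^+}{\mathrm{d}s} W_2^2(\widetilde\Phi_s\mu,\nu) + \Hen(\widetilde\Phi_s\mu) \leqslant \Hen(\nu)$, together with the energy dissipation equality $\int_0^s |\dot{\widetilde\Phi}_r\mu|^2 \ddr r = \Hen(\mu) - \Hen(\widetilde\Phi_s\mu)$.

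The one genuinely non-formal point is to identify this abstract gradient-flow curve $\widetilde\Phi$ with the heat flow $\Phi$. The standard route, going back to \cite{Jordan1998}, is to compute the Wasserstein subdifferential of $\Hen$: if $\mu$ has density $u$ with $\sqrt u \in H^1(\Omega)$ then $\partial\Hen(\mu)$ contains $\nabla u / u$, so the continuity equation $\partial_s u_s + \nabla\cdot(u_s v_s) = 0$ driven by the velocity $v_s = -\nabla u_s / u_s \in -\partial\Hen(u_s)$ reads $\partial_s u_s = \Delta u_s$ in the distributional sense, with the no-flux condition $u_s v_s \cdot n = 0$, i.e.\ $\nabla u_s \cdot n = 0$, encoded in the weak formulation. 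Since the heat equation with Neumann boundary conditions on the Lipschitz convex domain $\Omega$ has a unique weak solution (recalled in the excerpt via \cite[Section 7]{Arendt2002} and \cite{Pierre1982}), the two curves coincide, and all the metric statements above transfer to $\Phi$. An alternative, avoiding the subdifferential computation, is to show directly that the JKO minimizing-movement scheme for $\Hen$ — whose limit is, by the metric theory, precisely the $\mathrm{EVI}_0$-flow — converges to the weak solution of the heat equation, which is the original argument of \cite{Jordan1998}.

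It then remains to read off the three displayed identities. The inequality \eqref{equation_EVI} is exactly the $\mathrm{EVI}_0$ above with $\widetilde\Phi$ replaced by $\Phi$ (and the upper right derivative written as a $\limsup$ of difference quotients). The Energy Identity \eqref{equation_EI} is the energy dissipation equality; note that the $2$-absolute continuity of $s \mapsto \Phi_s\mu$ is part of the construction, and that the right-hand side is finite for $s>0$ thanks to the $L^\infty$ regularizing bound recalled just above, which gives $\Hen(\Phi_s\mu) \leqslant C_s < +\infty$. Finally, the contraction \eqref{equation_heat_flow_decreases_W} follows from $\mathrm{EVI}_0$ by the classical doubling argument: apply the EVI for the flow started at $\mu$ tested against $\nu_s := \Phi_s\nu$, apply the EVI for the flow started at $\nu$ tested against $\mu_s := \Phi_s\mu$, sum the two inequalities, and conclude that $s \mapsto W_2^2(\Phi_s\mu, \Phi_s\nu)$ has non-positive upper right derivative; being also continuous in $s$ it is non-increasing, and for $s = 0$ there is nothing to prove.

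I expect the only real subtlety to be the subdifferential characterization of $\Hen$ and the matching of the weak formulations and boundary conditions on a merely Lipschitz convex domain; once that identification is in place, everything else is a direct transcription of results in \cite{Ambrosio2005}, so the write-up can be kept short by citing the relevant statements there together with \cite{Jordan1998} for the PDE identification.
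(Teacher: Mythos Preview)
Your proposal is correct and follows essentially the same route as the paper: the paper does not give a self-contained proof but simply refers to \cite[Theorem 11.2.1]{Ambrosio2005} (and \cite[Theorem 2.4]{Matthes2009}) for these three properties as consequences of the gradient-flow theory for $\Hen$, noting that the relevant hypothesis is $\lambda$-convexity along \emph{generalized} geodesics with $\lambda = 0$, which holds because $\Omega$ is convex. Your write-up is more detailed than the paper's, spelling out the identification of the abstract $\mathrm{EVI}_0$-flow with the Neumann heat semigroup and the doubling argument for contraction; the only small adjustment is to state the convexity hypothesis as convexity along generalized geodesics (not merely displacement convexity), since that is what the existence theory in \cite{Ambrosio2005} actually requires, and to note that the contraction \eqref{equation_heat_flow_decreases_W} for arbitrary $\mu,\nu \in \Prob(\Omega)$ (possibly with infinite entropy) then follows by density, using the regularizing estimate $\Hen(\Phi_\varepsilon\mu) \leqslant C_\varepsilon$ together with continuity of $W_2$ and of the heat flow.
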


\noindent One can look at \cite[Theorem 11.2.1]{Ambrosio2005} to see the generalization of these identities to more general functionals than $\Hen$ along their gradient flow, provided that assumptions of $\lambda$-convexity along generalized geodesics are satisfied by the functional (and this is the case for $\Hen$ with $\lambda = 0$ because of our assumption of convexity of $\Omega$). All this properties are also summarized in \cite[Theorem 2.4]{Matthes2009}.

\subsection{Statement of the continuous problem}

As explained in the introduction, the object on which we will work, a "$W_2$-traffic plan", is a probability measure on the set of curves valued in $\Prob(\Omega)$, i.e. an element of $\Prob(\Gamma)$. Recall that the space $\Prob(\Gamma)$ is equipped with the topology of weak convergence of measures. If $Q \in \Prob(\Gamma)$, we need to translate the constraints, namely the fact that the values of the curves at $t=0$ and $t=1$ are fixed, and the incompressibility at each time $t$.

Incompressibility means that at each time $t$, the measure $e_t \# Q$ (which is an element of $\Prob(\Prob(\Omega))$) when averaged (its mean value is an element of $\Prob(\Omega)$), is equal to $\Leb$. We therefore need to define what the mean value of $e_t \# Q$ is.

\begin{defi}
\label{definition_m_t}
Let $S$ be a closed subset of $\intervalleff{0}{1}$ and $t \in S$. If $Q \in \Prob(\Gamma_S)$, we denote by $m_t(Q)$ the probability measure on $\Omega$ defined by
\begin{equation}
\label{equation_definition_m_t}
\forall a \in C(\Omega), \ \int_\Omega a(x) \ddr[m_t(Q)](x) := \int_{\Gamma_S} \left( \int_\Omega a(x) \ddr \rho_t(x) \right) \ddr Q(\rho). 
\end{equation}  
\end{defi}

\noindent We can easily see that, for a fixed $t$, $Q \mapsto m_t(Q)$ is continuous. It is an easy application of Fubini's theorem to show that, if $Q$-a.e. $\rho_t$ is absolutely continuous w.r.t. to $\Leb$, then $m_t(Q)$ is also absolutely continuous w.r.t. $\Leb$, and its density is the mean density of the $\rho_t$ w.r.t. $Q$. Incompressibility is then expressed by the fact that $m_t(Q) = \Leb$ for any $t$. 

To encode the boundary conditions, we just consider a coupling $\gamma \in \Prob(\Gamma_{\{ 0,1 \}}) = \Prob( \Prob(\Omega) \times \Prob(\Omega) )$ between the initial and final values, compatible with the incompressibilty constraint (i.e. $m_0(\gamma) = m_1(\gamma) = \Leb$), and we impose that $(e_0, e_1) \# Q = \gamma$.  

\begin{defi}
\label{definition_admissible_traffic_plans}
Let $\gamma \in \Prob(\Gamma_{\{ 0,1 \}})$ be a coupling compatible with the incompressibility constraint (i.e. $m_0(\gamma) = m_1(\gamma) = \Leb$) and $S$ be a closed subset of $\intervalleff{0}{1}$ containing $0$ and $1$. The space of incompressible $W_2$-traffic plans is 
\begin{equation*}
\Probin(\Gamma_S) := \left\{ Q \in \Prob(\Gamma_S)\ :  \ \forall t \in S, \ m_t(Q) = \Leb  \right\}.
\end{equation*}
The space of $W_2$-traffic plans satisfying the boundary conditions is 
\begin{equation*}
\Probbc(\Gamma_S) := \left\{ Q \in \Prob(\Gamma_S)\ :  \ (e_0, e_1) \# Q = \gamma  \right\}.
\end{equation*}
The space of admissible $W_2$-traffic plans is 
\begin{equation*}
\Proba(\Gamma_S) := \Probin(\Gamma_S) \cap \Probbc(\Gamma_S).
\end{equation*}
\end{defi}    

\noindent The following proposition derives directly from the definition.

\begin{prop}
\label{proposition_Padmissible_closed}
If $S$ is a closed subset of $\intervalleff{0}{1}$ containing $0$ and $1$, the spaces $\Probin(\Gamma_S)$, $\Probbc(\Gamma_S)$ and $\Proba(\Gamma_S)$ are closed in $\Prob(\Gamma_S)$.
\end{prop}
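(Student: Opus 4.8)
The plan is to show that each of the three sets is closed by exhibiting it as a preimage of a closed set under a continuous map, or as an intersection of such preimages. Recall from the remark just after Definition \ref{definition_m_t} that for each fixed $t \in S$ the map $m_t : \Prob(\Gamma_S) \to \Prob(\Omega)$ is continuous (with $\Prob(\Omega)$ carrying the weak topology). Likewise, the evaluation maps $e_0, e_1 : \Gamma_S \to \Prob(\Omega)$ are continuous, so the map $Q \mapsto (e_0, e_1) \# Q$ from $\Prob(\Gamma_S)$ to $\Prob(\Prob(\Omega) \times \Prob(\Omega))$ is continuous as well, since pushforward by a continuous map is continuous for the weak topology (if $Q_n \to Q$ weakly and $a \in C(\Gamma_{\{0,1\}})$, then $a \circ (e_0,e_1) \in C(\Gamma_S)$, so $\int a \ddr [(e_0,e_1)\# Q_n] = \int a \circ (e_0,e_1) \ddr Q_n \to \int a \circ (e_0,e_1) \ddr Q$).

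With these continuity facts in hand, the argument is immediate. First, for $\Probin(\Gamma_S)$: the singleton $\{\Leb\}$ is closed in $\Prob(\Omega)$ (it is a metric space, hence Hausdorff), so for each $t \in S$ the set $\{ Q : m_t(Q) = \Leb \}$ is closed in $\Prob(\Gamma_S)$ as the preimage of $\{\Leb\}$ under the continuous map $m_t$. Then
\begin{equation*}
\Probin(\Gamma_S) = \bigcap_{t \in S} \{ Q \in \Prob(\Gamma_S) : m_t(Q) = \Leb \}
\end{equation*}
is an intersection of closed sets, hence closed. Second, for $\Probbc(\Gamma_S)$: the singleton $\{\gamma\}$ is closed in $\Prob(\Prob(\Omega) \times \Prob(\Omega)) = \Prob(\Gamma_{\{0,1\}})$, so $\Probbc(\Gamma_S) = \{ Q : (e_0,e_1)\# Q = \gamma \}$ is the preimage of $\{\gamma\}$ under a continuous map, hence closed. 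Finally, $\Proba(\Gamma_S) = \Probin(\Gamma_S) \cap \Probbc(\Gamma_S)$ is closed as the intersection of two closed sets.

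There is essentially no obstacle here; the only points requiring a word of justification are the continuity of $m_t$ (already asserted in the text after Definition \ref{definition_m_t}, and in any case a direct consequence of \eqref{equation_definition_m_t}: testing against $a \in C(\Omega)$ amounts to testing $Q$ against the function $\rho \mapsto \int_\Omega a \ddr \rho_t$, which is continuous and bounded on $\Gamma_S$) and the continuity of the pushforward $Q \mapsto (e_0,e_1)\# Q$, both of which follow from the definition of the weak topology together with the continuity of the underlying maps $e_0, e_1$ and $\rho \mapsto \int a \ddr \rho_t$. Since $S$ need not be countable, one should note that an arbitrary intersection of closed sets is closed, so no countability of $S$ is needed for the first part.
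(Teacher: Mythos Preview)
Your proof is correct and is precisely the argument the paper has in mind: it states that the proposition ``derives directly from the definition'' and gives no further proof, and your preimage/intersection argument is the natural way to unpack that remark. You have simply supplied the routine details the paper omits.
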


We have now enough vocabulary to state the minimization problem we are interested in, namely to minimize the averaged action over the set of admissible $W_2$-traffic plans. We denote by $\A : \Prob(\Gamma) \to \intervalleff{0}{+ \infty}$ the functional defined by, for any $Q \in \Prob(\Gamma)$, 
\begin{equation*}
\A(Q) := \int_\Gamma A(\rho) \ddr Q(\rho),
\end{equation*} 
where we recall that $A(\rho)$ is the action of the curve $\rho$, see \eqref{equation_defintion_A}.

\begin{defi}
The continuous problem is defined as
\begin{equation}
\label{equation_continuous_problem}
\tag{CP}
\min \{ \A(Q)\ :  \ Q \in \Proba(\Gamma) \}.
\end{equation}
Any $Q \in \Proba(\Gamma)$ with $\A(Q) < + \infty$ realizing the minimum will be referred as a solution of the continuous problem.
\end{defi}

In order to prove the existence of a solution to \eqref{equation_continuous_problem}, we rely on the classical following lemma which is valid if $\Gamma_S$ is replaced by any metric space (see for instance \cite[Proposition 7.1]{SantambrogioOTAM} and \cite[Remark 5.15]{Ambrosio2005}).  

\begin{lm}
\label{lemma_lsc_compact_averaged}
Let $S$ be a closed subset of $\intervalleff{0}{1}$ and $F : \Gamma_S \to \intervalleff{0}{+ \infty}$ a l.s.c. positive function. Then the function $\F : \Prob(\Gamma_S) \to \intervalleff{0}{+ \infty}$ defined by
\begin{equation*}
\F(Q) = \int_{\Gamma_S} F(\rho) \ddr Q(\rho)
\end{equation*}
is convex and l.s.c. Moreover, if the sublevel sets of $F$ are compact, so are those of $\F$. 
\end{lm}

\noindent The existence of a solution to \eqref{equation_continuous_problem} is then a straightforward application of the direct method of calculus of variations. 

\begin{theo}
\label{theorem_existence_solution_CP}
There exists at least one solution to \eqref{equation_continuous_problem}.
\end{theo}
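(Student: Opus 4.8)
The plan is the direct method of the calculus of variations, and the three things to check are: (i) that $\A$ is l.s.c.\ on $\Prob(\Gamma)$ with compact sublevel sets; (ii) that $\Proba(\Gamma)$ is closed; and (iii) that the infimum of $\A$ over $\Proba(\Gamma)$ is finite. Here (iii) is the only point with real content: the admissible set is manifestly non-empty (for instance the affine interpolation $t \mapsto (1-t)\mu + t\nu$, measurably in $(\mu,\nu)$, pushed forward by $\gamma$ gives an element of $\Proba(\Gamma)$, since at each time its average is $(1-t)m_0(\gamma) + t\,m_1(\gamma) = \Leb$), but such an easy competitor may have infinite action.

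Points (i) and (ii) are essentially for free. By Proposition \ref{proposition_lsc_compact_A} the action $A : \Gamma \to \intervalleff{0}{+\infty}$ is l.s.c.\ with compact sublevel sets, so Lemma \ref{lemma_lsc_compact_averaged} (with $S = \intervalleff{0}{1}$ and $F = A$) gives at once that $\A(Q) = \int_\Gamma A \ddr Q$ is convex and l.s.c.\ on $\Prob(\Gamma)$ and that $\{\A \leqslant c\}$ is compact for every finite $c$; and Proposition \ref{proposition_Padmissible_closed} says $\Proba(\Gamma)$ is closed. Granting (iii), I would then take a minimizing sequence $(Q_n)_n$: for $n$ large $\A(Q_n)$ stays below a finite $c$, so $(Q_n)$ lives in the compact set $\{\A \leqslant c\}$ and, along a subsequence, converges weakly to some $Q$; since $\Proba(\Gamma)$ is closed, $Q \in \Proba(\Gamma)$, and $\A(Q) \leqslant \liminf_n \A(Q_n) = \inf_{\Proba(\Gamma)} \A$ by lower semi-continuity, so $Q$ solves \eqref{equation_continuous_problem}.

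The main obstacle is therefore (iii), and it reflects a genuine tension between the two constraints. Pushing $\gamma$ forward by the map sending $(\mu,\nu)$ to a constant-speed Wasserstein geodesic between them produces a plan with the correct endpoints and action at most $\tfrac12 \operatorname{diam}(\Omega)^2$, but it is \emph{not} admissible: the time-$t$ slice of a geodesic is not an affine function of its endpoints, so the average $m_t$ ceases to equal $\Leb$ for $t \in \intervalleoo{0}{1}$. Conversely the two obvious constructions that \emph{do} preserve incompressibility at every time -- the affine curve $t \mapsto (1-t)\mu + t\nu$ (linear in the data, and $m_0(\gamma)=m_1(\gamma)=\Leb$), and running the Neumann heat flow $\Phi$ on each phase (the semigroup is linear and fixes $\Leb$) -- have infinite action precisely when the boundary data charge singular measures, which is the interesting case, e.g.\ Brenier's classical conditions where $\rho^\alpha_0,\rho^\alpha_1$ are Dirac masses: the affine curve between two distinct Diracs is only $1/2$-Hölder in $W_2$ and fails to be $2$-absolutely continuous, while the heat-flow curve issued from a Dirac is not $2$-absolutely continuous either (if it were, applying the Energy Identity \eqref{equation_EI} from a positive time together with the lower semi-continuity of $\Hen$ would force $\Hen(\delta_x) < +\infty$). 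Exhibiting a competitor that is simultaneously incompressible, of finite action, and compatible with the prescribed coupling $\gamma$ is exactly the non-trivial point, and it is the classical theorem of Brenier \cite{Brenier1989} (whose construction routes the mass of each particle along a carefully chosen oscillating family of trajectories, keeping the instantaneous configuration uniform); invoking it yields (iii) and completes the proof.
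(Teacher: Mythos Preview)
Your proof is correct and follows the same strategy as the paper: direct method via Proposition \ref{proposition_lsc_compact_A}, Lemma \ref{lemma_lsc_compact_averaged}, and Proposition \ref{proposition_Padmissible_closed}, with the only substantive step being the existence of an admissible plan with finite action. For that step the paper is slightly more explicit than your appeal to \cite{Brenier1989}: it observes that a convex $\Omega$ is the image of the unit cube by a Lipschitz measure-preserving map and then invokes \cite[Theorem 3.3]{Ambrosio2009} (translated to the non-parametric setting via Proposition \ref{proposition_parametric_to_nonparametric}).
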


\begin{proof}
The functional $\A$ is l.s.c. and has compact sublevel sets thanks to Proposition \ref{proposition_lsc_compact_A} and Lemma \ref{lemma_lsc_compact_averaged}. Moreover the set $\Proba(\Gamma)$ is closed. To use the direct method of calculus of variations, we only need to prove that there exists $Q \in \Proba(\Gamma)$ such that $\A(Q) < + \infty$. 

Notice that as $\Omega$ is convex, it is the image of the unit cube of $\R^d$ by a Lipschitz and measure-preserving map (see \cite[Theorem 5.4]{Fonseca1992}\footnotemark). It is known (see \cite[Theorem 3.3]{Ambrosio2009} and Proposition \ref{proposition_parametric_to_nonparametric} to translate the result in our setting) that the fact that $\Omega$ is the image of the unit cube by a Lipschitz and measure-preserving map ensures the existence of an admissible $W_2$-traffic plan with finite action.   
\end{proof}

\footnotetext{Strictly speaking, in \cite{Fonseca1992}, it is required that $\Omega$ has a piecewise $C^1$ boundary, but this assumption is only used to prove that the Minkowski functional of $\Omega$ is Lipschitz. If $\Omega$ is convex, then its Minkowski functional is convex, hence Lipschitz. Thus, one can drop the assumption of a piecewise $C^1$ boundary if $\Omega$ is convex.}

Let us mention here some already known results about the continuous problem (the existence of a solution being one of them). In some cases there is no uniqueness in \eqref{equation_continuous_problem}, we refer the reader to \cite{Bernot2009} for a comprehensive study of one of such cases. In \cite{Ambrosio2009}, it is shown how, from a pair of measure-preserving plans (i.e. a pair of elements of $\{ \mu \in \Prob(\Omega \times \Omega) \ : \pi_0 \# \mu = \Leb \text{ and } \pi_1 \# \mu = \Leb \}$), one can build an incompressible coupling $\gamma \in\Probin(\Gamma_{\{ 0,1\}})$ which is, in some sense, a concatenation of them. Indeed, if $\mu, \nu \in \Prob(\Omega \times \Omega)$ are two measure-preserving plans, one can consider $(\mu_x)_{x \in \Omega}$ and $(\nu_x)_{x \in \Omega}$ the disintegration of $\mu$ and $\nu$ w.r.t. $\pi_1$, and then define $\gamma \in \Probin(\Gamma_{\{ 0,1 \}})$ by its action on continuous functions $a \in C(\Gamma_{\{ 0,1 \}}) = C(\Prob(\Omega)^2)$: 
\begin{equation*}
\int_{\Gamma_{\{ 0,1 \}}} a(\rho_0, \rho_1) \ddr \gamma(\rho_0, \rho_1) := \int_\Omega a(\mu_x, \nu_x) \ddr \Leb(x). 
\end{equation*}  
(To understand this construction and check that it is incompressible, one can look at \cite{Ambrosio2009} and Section \ref{section_equivalence_formulation} for the translation in the non parametric case, as \cite{Ambrosio2009} corresponds to the parametric case with $(\Afr, \theta) = (\Omega, \Leb)$). Using this construction, then \cite[Proposition 3.4]{Ambrosio2009} states that the minimal cost (or more precisely the square root of the minimal value of \eqref{equation_continuous_problem}) defines a distance on the space of measure-preserving plans.   

\bigskip

In this article we are interested in the temporal behavior of the entropy when averaged over all phases.

\begin{defi}
Let $S$ be a closed subset of $\intervalleff{0}{1}$. For any $Q \in \Prob(\Gamma_S)$, we define the \emph{averaged} entropy $H_Q : S \to \intervalleff{0}{+ \infty}$ by, for any $t \in S$,
\begin{equation*}
H_Q(t) := \int_{\Gamma_S} \Hen(\rho_t) \ddr Q(\rho).
\end{equation*}
If $Q \in \Prob(\Gamma)$, the quantity $\dst{\int_0^1 H_Q(t) \ddr t}$ will be called the \emph{total} entropy of $Q$.
\end{defi}

\noindent We recall that $\Hen$ is the entropy of a probability measure w.r.t. $\Leb$, see \eqref{definition_entropy}. By lower semi-continuity of $\Hen$ and Lemma \ref{lemma_lsc_compact_averaged}, we can see that the function (of the variable $t$) $H_Q$ is l.s.c. In the sequel, we will concentrate on the cases where the averaged entropy belongs to $L^1(\intervalleff{0}{1})$, i.e. where the total entropy is finite. By doing so, we exclude classical solutions: indeed, for a classical solution $Q \in \Proba(\Gamma)$, for any $t$ the measure $e_t \# Q$ is concentrated on Dirac masses, for which the entropy is infinite. We denote by $\Proba^H(\Gamma)$ the set of admissible $W_2$-traffic plans for which the total entropy is finite: 
\begin{equation*}
\Proba^H(\Gamma) := \Proba(\Gamma) \cap \left\{ Q \in \Prob(\Gamma) \ :  \ \int_0^1 H_Q(t) \ddr t < + \infty \right\}
\end{equation*} 
The main (and restrictive) assumption that we will consider is that there exists a solution of the continuous problem \eqref{equation_continuous_problem} in $\Proba^H(\Gamma)$: 

\begin{asmp}
\label{assumption_L1_entropy}
There exists $Q \in \Proba^H(\Gamma)$ such that $\A(Q) = \min \{ \A(Q')\ :  \ Q' \in \Proba(\Gamma) \}$.
\end{asmp}
\noindent We will also work with a second assumption which will turn out to be more restrictive than Assumption \ref{assumption_L1_entropy}, but which has the advantage of involving only the boundary terms, namely the fact that the initial and final values have finite averaged entropy. 
\begin{asmp}
\label{assumption_finite_entropy_IF}
The coupling $\gamma$ is such that $H_\gamma(0)$ and $H_\gamma(1)$ are finite.
\end{asmp}  
\noindent In other words, we impose that 
\begin{equation*}
\int_\gamma \left( \int_\Omega \rho_0 \ln \rho_0 \right) \ddr \gamma(\rho) < + \infty \ \text{ and } \ \int_\gamma \left( \int_\Omega \rho_1 \ln \rho_1 \right) \ddr \gamma(\rho) < + \infty.
\end{equation*}
\noindent In particular, Assumption \ref{assumption_finite_entropy_IF} implies that $e_0 \# \gamma$ and $e_1 \# \gamma$ are concentrated on measures that are absolutely continuous w.r.t. $\Leb$: it excludes any classical boundary data.  

\bigskip

The two main results of this paper can be stated as follows. Recall that $\Omega$ is assumed to be convex.
\begin{theo}
\label{theorem_finite_entropy_IF_implies_L1}
Suppose that Assumption \ref{assumption_finite_entropy_IF} holds. Then there exists a solution $Q \in \Proba(\Gamma)$ of the continuous problem \eqref{equation_continuous_problem} such that $H_Q(t) \leqslant \max(H_\gamma(0), H_\gamma(1))$ for any $t \in \intervalleff{0}{1}$.
\end{theo}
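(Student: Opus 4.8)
The plan is to follow the scheme announced in the introduction: discretize time, run a flow‑interchange argument at the discrete level, and pass to the limit. \emph{First,} for $N\geqslant 2$ set $\tau:=1/N$ and $S_N:=\{0,\tau,\dots,1\}$, fix $q>1$, and consider the problem of minimizing
\begin{equation*}
Q\longmapsto \int_{\Gamma_{S_N}}\Bigl(\sum_{k=1}^N\frac{W_2^2(\rho_{(k-1)\tau},\rho_{k\tau})}{2\tau}\Bigr)\ddr Q(\rho)+\sum_{k=0}^N\C_q\bigl(m_{k\tau}(Q)\bigr)
\end{equation*}
over all $Q\in\Prob(\Gamma_{S_N})$ with $(e_0,e_1)\#Q=\gamma$. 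Existence of a minimizer $Q_{N,q}$ follows from the direct method: $\Gamma_{S_N}$ is compact, $\rho\mapsto\sum_k W_2^2(\rho_{(k-1)\tau},\rho_{k\tau})$ is continuous and nonnegative, each $Q\mapsto\C_q(m_{k\tau}(Q))$ is l.s.c. (composition of the continuous $Q\mapsto m_{k\tau}(Q)$ with the l.s.c. $\C_q$ of Proposition~\ref{proposition_proprties_H_Cp}), the constraint is closed, and the functional is finite at the restriction $\bar Q_N:=e_{S_N}\#\bar Q$ to the grid of an optimal plan $\bar Q$ of \eqref{equation_continuous_problem} (Theorem~\ref{theorem_existence_solution_CP}): this restriction is incompressible at every node, so its congestion terms vanish, and its discretized action is at most $\A(\bar Q)$ by \eqref{equation_representation_A_sup}.

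\emph{The key step} is the discrete convexity of the averaged entropy. Fix an interior node $1\leqslant k\leqslant N-1$ and, for $s>0$, let $Q^s$ be the push‑forward of $Q_{N,q}$ by the map that applies the heat flow $\Phi_s$ to the $k$‑th coordinate and leaves the others unchanged; then $(e_0,e_1)\#Q^s=\gamma$. Since $\Phi_s$ is linear, $m_{j\tau}(Q^s)=m_{j\tau}(Q_{N,q})$ for $j\neq k$ and $m_{k\tau}(Q^s)=\Phi_s(m_{k\tau}(Q_{N,q}))$, along which $\C_q$ does not increase (the Neumann heat flow decreases every $L^q$ norm), so minimality of $Q_{N,q}$ forces the variation of the only two affected kinetic terms to be nonnegative:
\begin{multline*}
0\leqslant \frac{1}{2\tau}\int_{\Gamma_{S_N}}\Bigl[\,W_2^2(\rho_{(k-1)\tau},\Phi_s\rho_{k\tau})+W_2^2(\Phi_s\rho_{k\tau},\rho_{(k+1)\tau})\\[-0.3ex]
{}-W_2^2(\rho_{(k-1)\tau},\rho_{k\tau})-W_2^2(\rho_{k\tau},\rho_{(k+1)\tau})\,\Bigr]\ddr Q_{N,q}(\rho).
\end{multline*}
Dividing by $s$, letting $s\to0^+$, and invoking the Evolution Variational Inequality \eqref{equation_EVI} in its integrated form $\frac{1}{2s}\bigl(W_2^2(\Phi_s\mu,\nu)-W_2^2(\mu,\nu)\bigr)\leqslant\Hen(\nu)-\Hen(\Phi_s\mu)$ — which remains valid even when $\Hen(\mu)=+\infty$, because the regularizing bound $\|\Phi_r\mu\|_{L^\infty}\leqslant C(r^{-d/2}+1)$ makes $r\mapsto\Hen(\Phi_r\mu)$ integrable near $0$ — together with reverse Fatou and lower semicontinuity of $\Hen$, I expect to obtain $2\,H_{Q_{N,q}}(k\tau)\leqslant H_{Q_{N,q}}((k-1)\tau)+H_{Q_{N,q}}((k+1)\tau)$ for every interior $k$ (trivially true when a neighbouring value is infinite). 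Since $H_{Q_{N,q}}(0)=H_\gamma(0)$ and $H_{Q_{N,q}}(1)=H_\gamma(1)$ are finite by Assumption~\ref{assumption_finite_entropy_IF}, this midpoint‑convexity forces $k\mapsto H_{Q_{N,q}}(k\tau)$ to be a finite convex sequence, hence bounded by $M:=\max(H_\gamma(0),H_\gamma(1))$.

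\emph{Finally,} pass to the limit. Let $\tilde Q_{N,q}\in\Prob(\Gamma)$ be the push‑forward of $Q_{N,q}$ by piecewise $W_2$‑geodesic interpolation between the nodes (with a measurable choice of geodesics); then $\A(\tilde Q_{N,q})$ equals the discretized action of $Q_{N,q}$, hence is at most $\A(\bar Q)=\min\eqref{equation_continuous_problem}$ uniformly. By Proposition~\ref{proposition_lsc_compact_A} and Lemma~\ref{lemma_lsc_compact_averaged} the sublevels of $\A$ are compact, so choosing $q$ as a function of $N$ with $q\to+\infty$ and taking $N=2^j$ (nested grids), a subsequence satisfies $\tilde Q_{N,q}\rightharpoonup Q$. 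The limit $Q$ is admissible: $(e_0,e_1)\#Q=\gamma$ by continuity of $(e_0,e_1)$; and $m_t(Q)=\Leb$ for every $t$, since on the subinterval containing $t$ one has $W_2(m_t(\tilde Q_{N,q}),m_{(k-1)\tau}(\tilde Q_{N,q}))\leqslant\sqrt{2\tau\,\A(\tilde Q_{N,q})}\to0$ while $m_{(k-1)\tau}(\tilde Q_{N,q})=m_{(k-1)\tau}(Q_{N,q})$ converges to $\Leb$ as $q\to+\infty$ by the uniform bound $\C_q(m_{(k-1)\tau}(Q_{N,q}))\leqslant\A(\bar Q)$. Moreover $\A(Q)\leqslant\liminf\A(\tilde Q_{N,q})\leqslant\lim(\text{discretized action of }\bar Q_N)=\A(\bar Q)$ by lower semicontinuity and monotone convergence in \eqref{equation_representation_A_sup}, so $Q$ solves \eqref{equation_continuous_problem}. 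For the entropy bound, geodesic convexity of $\Hen$ (Proposition~\ref{proposition_proprties_H_Cp}) gives $\Hen(\tilde\rho_t)\leqslant$ the linear interpolation of $\Hen(\rho_{(k-1)\tau})$ and $\Hen(\rho_{k\tau})$ on each geodesic piece; integrating against $Q_{N,q}$, the function $t\mapsto H_{\tilde Q_{N,q}}(t)$ is dominated by the piecewise‑linear interpolation of the convex sequence $(H_{Q_{N,q}}(k\tau))_k$, which is convex with endpoint values $H_\gamma(0),H_\gamma(1)$, hence $\leqslant M$ for all $t$. Passing to the limit by lower semicontinuity of $Q\mapsto\int_\Gamma\Hen(\rho_t)\ddr Q(\rho)$ (Lemma~\ref{lemma_lsc_compact_averaged} applied to $\Hen\circ e_t$) yields $H_Q(t)\leqslant M$ for all $t$, as claimed.

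The conceptual heart — the flow‑interchange computation — is short. The main difficulties will be, on the one hand, justifying the EVI step when the intermediate averaged entropies are not a priori finite (this is precisely what forces the use of the $L^\infty$‑regularizing estimate for the heat flow rather than \eqref{equation_EVI} alone), and, on the other hand, carrying out the double limit $N\to\infty$, $q\to+\infty$ while keeping track of the averaged density at intermediate times and of the measurable selections — lengthy, but without further conceptual input.
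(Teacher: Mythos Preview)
Your approach is correct and follows the same overall strategy as the paper --- discretize in time, run the flow-interchange argument at each interior node, interpolate geodesically, and pass to the limit --- but with one genuine simplification worth noting. The paper's discrete problem carries an additional entropic penalization $\lambda\sum_{k=1}^{N-1}\tau H_Q(k\tau)$ (needed there to select, in Theorem~\ref{theorem_main_convexity_entropy}, the solution of minimal total entropy), and this term is what guarantees \emph{a priori} that the intermediate discrete entropies $H_{Q}(k\tau)$ are finite, so that the differential form of \eqref{equation_EVI} combined with reverse Fatou applies. You bypass this: by using the \emph{integrated} EVI
\[
\frac{W_2^2(\Phi_s\mu,\nu)-W_2^2(\mu,\nu)}{2s}\leqslant \Hen(\nu)-\Hen(\Phi_s\mu),
\]
valid for every $\mu$ thanks to the logarithmic bound $\Hen(\Phi_r\mu)\leqslant \ln(C(r^{-d/2}+1))$, and then sending $s\to0$ via monotone convergence (note: $\Hen(\Phi_s\rho_{k\tau})\uparrow\Hen(\rho_{k\tau})$, so this is ordinary Fatou/monotone convergence rather than ``reverse Fatou''), you obtain the discrete midpoint inequality without any prior finiteness of $H_{Q_{N,q}}(k\tau)$. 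The finiteness then follows \emph{a posteriori} from the convexity and Assumption~\ref{assumption_finite_entropy_IF}. This lets you drop the parameter $\lambda$ entirely and take a single diagonal limit $q=q(N)\to\infty$ instead of the paper's iterated triple limit $q\to\infty$, $N\to\infty$, $\lambda\to0$, which is cleaner for the present theorem (though the paper needs $\lambda$ anyway for Theorem~\ref{theorem_main_convexity_entropy}).

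Two small remarks. First, your bound $W_2(m_t(\tilde Q_{N,q}),m_{k\tau}(\tilde Q_{N,q}))\leqslant\sqrt{2\tau\,\A(\tilde Q_{N,q})}$ is correct but deserves one line of justification: it follows from joint convexity of $W_2^2$, since averaging optimal plans between $\rho_t$ and $\rho_{k\tau}$ over $\tilde Q_{N,q}$ yields an admissible plan between the averages. Second, the nested-grid choice $N=2^j$ is not actually needed: the uniform action bound already gives tightness and hence a convergent subsequence in $\Prob(\Gamma)$, regardless of whether the grids are nested.
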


\noindent In other words, if the initial and final averaged entropy are finite, then there exists a solution of the continuous problem with a uniformly bounded averaged entropy. In particular, Assumption \ref{assumption_finite_entropy_IF} implies Assumption \ref{assumption_L1_entropy}. 

\begin{theo}
\label{theorem_main_convexity_entropy}
Suppose that Assumption \ref{assumption_L1_entropy} holds. Then, among all the solutions of the continuous problem \eqref{equation_continuous_problem}, the unique $Q \in \Proba^H(\Gamma)$ which minimizes the total entropy $\int_0^1 H_Q(t) \ddr t$ is such that $H_Q$ is convex.   
\end{theo}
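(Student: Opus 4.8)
The plan is to introduce a time discretization of the continuous problem, following the strategy outlined in the introduction, and then to pass to the limit. Fix $N \geqslant 1$ and set $\tau = 1/N$, with the discrete time grid $S_N = \{0, \tau, 2\tau, \ldots, 1\}$. First I would introduce a penalized discrete problem: for a parameter $q > 1$, minimize over $Q \in \Probbc(\Gamma_{S_N})$ the functional
\begin{equation*}
Q \mapsto \int_{\Gamma_{S_N}} \left[ \sum_{k=1}^N \frac{W_2^2(\rho_{(k-1)\tau}, \rho_{k\tau})}{2\tau} \right] \ddr Q(\rho) + \sum_{k=0}^N \C_q\big(m_{k\tau}(Q)\big),
\end{equation*}
where the hard incompressibility constraint $m_{k\tau}(Q) = \Leb$ has been relaxed into the congestion penalization $\C_q$ (recall $\C_q \geqslant 0$ with equality iff the argument is $\Leb$). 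Existence of a minimizer $Q^{N,q}$ follows from Lemma \ref{lemma_lsc_compact_averaged} together with lower semi-continuity and compactness of sublevel sets of the discrete action, exactly as in Theorem \ref{theorem_existence_solution_CP}; one also needs a competitor of finite energy, which can be obtained by sampling a continuous admissible traffic plan of finite action on the grid. To keep the total entropy under control along the scheme, I would add to the functional a small multiple $\varepsilon \sum_{k=0}^N H_{Q}(k\tau)$ of the discrete averaged entropy, or alternatively argue directly with the entropy-minimizing continuous solution; this is a technical point to be settled when making the convergence work.

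The heart of the argument is the discrete convexity of the averaged entropy for $Q^{N,q}$, obtained by the flow interchange. Fix an interior index $1 \leqslant k \leqslant N-1$ and let $\Phi_s$ be the heat flow. Define a competitor $Q^s$ by pushing forward $Q^{N,q}$ through the map that replaces the $k$-th coordinate $\rho_{k\tau}$ by $\Phi_s(\rho_{k\tau})$ and leaves all other coordinates unchanged. Using \eqref{equation_EVI} to differentiate the two adjacent Wasserstein terms $W_2^2(\rho_{(k-1)\tau}, \cdot)$ and $W_2^2(\cdot, \rho_{(k+1)\tau})$ at $s = 0$, together with the fact (following from \eqref{equation_heat_flow_decreases_W} and convexity of $\C_q$ along heat flow, or rather from the regularizing effect and the variational characterization of $\Phi$) that the term $\C_q(m_{k\tau}(Q^s))$ does not increase to first order, the optimality of $Q^{N,q}$ yields, after dividing by $\tau$ and letting $s \to 0$,
\begin{equation*}
\frac{1}{\tau}\Big( H_{Q^{N,q}}((k-1)\tau) - H_{Q^{N,q}}(k\tau) \Big) + \frac{1}{\tau}\Big( H_{Q^{N,q}}((k+1)\tau) - H_{Q^{N,q}}(k\tau) \Big) \geqslant 0,
\end{equation*}
which is precisely the discrete convexity $H_{Q^{N,q}}((k-1)\tau) + H_{Q^{N,q}}((k+1)\tau) \geqslant 2 H_{Q^{N,q}}(k\tau)$. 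This step also needs the Energy Identity \eqref{equation_EI} to guarantee that the relevant entropies are finite along the flow (the regularizing effect bounds $\Hen(\Phi_s \mu)$ for $s > 0$), and some care to justify interchanging the $s$-derivative with the integral over $\Gamma_{S_N}$ — a dominated convergence argument using the one-sided bounds from \eqref{equation_EVI}.

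Finally I would let first $q \to \infty$ and then $N \to \infty$ (or pass to a joint limit along a suitable subsequence). Compactness of sublevel sets of the discrete action gives a limiting traffic plan $Q$; the penalization $\C_q \to \infty$ off $\Leb$ forces $m_t(Q) = \Leb$ in the limit, so $Q \in \Proba(\Gamma)$, and lower semi-continuity of the action plus a $\Gamma$-convergence / recovery-sequence argument (interpolating the discrete curves by Wasserstein geodesics, whose action matches the discrete sum in the limit by \eqref{equation_representation_A_sup}) shows $\A(Q) = \min \eqref{equation_continuous_problem}$. The discrete convexity of $H_{Q^{N,q}}$ on the grids, combined with lower semi-continuity of $t \mapsto H_Q(t)$ and an upper bound on the total entropy inherited from the scheme, passes to a genuine convexity of $H_Q$ on $\intervalleff{0}{1}$ at the limit; uniqueness of the entropy-minimizing solution follows from strict convexity of $\Hen$ (Proposition \ref{proposition_proprties_H_Cp}) and convexity of $\A$ and of the constraint set. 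The main obstacle I anticipate is the last passage to the limit: controlling the averaged entropy uniformly in $N$ and $q$ so that convexity survives, and making sure the limit traffic plan is genuinely the entropy-minimizing solution of \eqref{equation_continuous_problem} rather than merely some solution — this is where the extra entropy term in the discrete functional, or a careful comparison argument against the continuous entropy-minimizer, must be handled with care.
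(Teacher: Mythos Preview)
Your proposal is essentially the paper's own strategy: discretize in time, relax incompressibility via $\C_q$, prove discrete convexity of the averaged entropy by flow interchange with the heat flow (using \eqref{equation_EVI} on the two adjacent Wasserstein terms and the fact that the heat flow decreases the $L^q$ norm), then pass to the limit. One point where you hedge should be made firm: the entropic penalization $\lambda \sum_k \tau H_Q(k\tau)$ is not optional but essential, and the limit $\lambda \to 0$ must be taken \emph{after} $q \to +\infty$ and $N \to +\infty$; this is precisely what forces the limit to be the entropy-minimizing solution and, more subtly, what yields the a.e.\ \emph{pointwise} convergence of the discrete averaged entropies to $H_{\bar Q}$ (via convergence of their integrals combined with the l.s.c.\ lower bound), without which the passage from discrete to continuous convexity would fail.
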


\noindent In other words, we are able to prove the convexity of the averaged entropy for the solution which is "the most mixed", i.e. the one for which the total entropy is minimal. This statement contains the fact that the criterion of minimization of the total entropy selects a unique solution among the -- potentially infinitely many -- solutions of \eqref{equation_continuous_problem}. Let us mention that our proof could be easily adapted to show that the convexity also holds for the solution $Q$ which minimizes $\int_0^1 H_Q(t) a(t) \ddr t$, where $a : \intervalleff{0}{1} \to \intervalleoo{0}{+ \infty}$ is any continuous and strictly positive function.  
 
The next two sections are devoted to the proof of these two theorems. As explained in the introduction, we will introduce a discrete (in time) problem \eqref{equation_discrete_problem} which approximates the continuous one. Without any assumption, we will be able to prove the convexity of the averaged entropy at the discrete level (Theorem \ref{theorem_convexity_entropy_discrete}). Then we will show that, under Assumption \ref{assumption_L1_entropy} or Assumption \ref{assumption_finite_entropy_IF}, the solutions of the discrete problems converge to a solution of the continuous one (Proposition \ref{proposition_optimality_barQ}). Under Assumption \ref{assumption_finite_entropy_IF}, this solution will happen to have a uniformly bounded entropy (Corollary \ref{corollary_finite_entropy_IF_implies_bounded}). Then we will show that, under Assumption \ref{assumption_L1_entropy}, this solution will be the one with minimal total entropy (Corollary \ref{proposition_barQ_lowest_entropy}) and that its averaged entropy is a convex function of time (Corollary \ref{corollary_barQ_true_convex_entropy}). 

Finally, the uniqueness of such a $Q \in \Proba^H(\Gamma)$ with minimal total entropy has nothing to do with the discrete problem, it is a simple consequence of the strict convexity of $\Hen$. We will therefore prove it here to end this section. Indeed, it is a consequence of the following proposition.

\begin{prop}
\label{proposition_strict_convexity_entropy}
Let $Q^1$ and $Q^2 \in \Proba^H(\Gamma)$ be two distinct admissible $W_2$-traffic plans. Then there exists $Q \in \Proba^H(Q)$ with 
\begin{equation*}
\A(Q) \leqslant \frac{1}{2} \left( \A(Q^1) + \A(Q^2) \right)
\end{equation*} 
and 
\begin{equation*}
\int_0^1 H_Q(t) \ddr t < \frac{1}{2} \left( \int_0^1 H_{Q^1}(t) \ddr t + \int_0^1 H_{Q^2}(t) \ddr t \right).
\end{equation*}
\end{prop}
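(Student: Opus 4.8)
The plan is to construct the competitor $Q$ by averaging $Q^1$ and $Q^2$ in the right way, namely not as the naive half-sum of measures on $\Gamma$ (which would give equality in the entropy estimate, not strict inequality), but by \emph{gluing trajectories pointwise}. Concretely, I would first realize both traffic plans on a common probability space: since $Q^1, Q^2 \in \Prob(\Gamma)$ and $\Gamma$ is polish, pick a standard probability space $(\Afr, \A, \theta)$ and measurable maps $\alpha \mapsto \rho^{1,\alpha}$, $\alpha \mapsto \rho^{2,\alpha}$ with law $Q^1$ and $Q^2$ respectively; moreover, because both satisfy the same boundary coupling $(e_0,e_1)\# Q^i = \gamma$, one can arrange the parametrizations so that $(\rho^{1,\alpha}_0, \rho^{1,\alpha}_1) = (\rho^{2,\alpha}_0, \rho^{2,\alpha}_1)$ for $\theta$-a.e.\ $\alpha$ (glue along the disintegration of $\gamma$). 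Then define, for each $\alpha$, the new curve $\rho^\alpha$ to be the \emph{geodesic midpoint curve}: at each time $t$, let $\rho^\alpha_t$ be the midpoint of a constant-speed geodesic in $(\Prob(\Omega), W_2)$ joining $\rho^{1,\alpha}_t$ to $\rho^{2,\alpha}_t$. Finally let $Q$ be the law of $\alpha \mapsto \rho^\alpha$.

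Next I would check admissibility and the action bound. Incompressibility: since $\rho^\alpha_t$ is the $W_2$-midpoint of $\rho^{1,\alpha}_t$ and $\rho^{2,\alpha}_t$, its density is \emph{not} in general the arithmetic mean of the two densities, so one cannot directly conclude $m_t(Q) = \Leb$. This is the main obstacle (see below); I would address it by instead building $\rho^\alpha_t$ as the \emph{arithmetic interpolation} $\tfrac12(\rho^{1,\alpha}_t + \rho^{2,\alpha}_t)$, which makes $m_t(Q) = \tfrac12(m_t(Q^1)+m_t(Q^2)) = \Leb$ immediate by linearity, and $(e_0,e_1)\# Q = \gamma$ follows from the boundary gluing. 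For the action, convexity of $A$ (Proposition \ref{proposition_lsc_compact_A}), more precisely convexity of $W_2^2$ with respect to the linear structure together with the representation formula \eqref{equation_representation_A_sup}, gives $A(\rho^\alpha) \leqslant \tfrac12(A(\rho^{1,\alpha}) + A(\rho^{2,\alpha}))$; integrating in $\theta$ yields $\A(Q) \leqslant \tfrac12(\A(Q^1)+\A(Q^2))$. In particular $\A(Q) < +\infty$.

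The heart of the matter is the \emph{strict} inequality for the total entropy. For fixed $t$, strict convexity of $\Hen$ (Proposition \ref{proposition_proprties_H_Cp}) gives
\begin{equation*}
\Hen\!\left(\tfrac12(\rho^{1,\alpha}_t + \rho^{2,\alpha}_t)\right) \leqslant \tfrac12\big(\Hen(\rho^{1,\alpha}_t) + \Hen(\rho^{2,\alpha}_t)\big),
\end{equation*}
with equality if and only if $\rho^{1,\alpha}_t = \rho^{2,\alpha}_t$. Integrating in $\alpha$ and then in $t$, and arguing that if equality held for a.e.\ $(t,\alpha)$ then $\rho^{1,\alpha}_t = \rho^{2,\alpha}_t$ for a.e.\ $(t,\alpha)$, hence by continuity of the curves $\rho^{1,\alpha} = \rho^{2,\alpha}$ for $\theta$-a.e.\ $\alpha$, hence $Q^1 = Q^2$, contradicting distinctness, I get the strict inequality $\int_0^1 H_Q(t)\ddr t < \tfrac12(\int_0^1 H_{Q^1} + \int_0^1 H_{Q^2})$. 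A small technical point to dispatch carefully: one must know that the finite-entropy hypotheses $Q^1,Q^2 \in \Proba^H$ force $\Hen(\rho^{i,\alpha}_t) < +\infty$ for a.e.\ $(t,\alpha)$, so that the strict convexity comparison is not a vacuous "$+\infty \leqslant +\infty$"; this follows from $\int_0^1 H_{Q^i}(t)\ddr t < +\infty$ and Fubini. Also one should record $\int_0^1 H_Q(t)\ddr t < +\infty$, so indeed $Q \in \Proba^H(\Gamma)$. The only genuinely delicate steps are the measurability of the gluing/interpolation construction (handled by standard measurable-selection and disintegration arguments, since everything lives in polish spaces) and the "equality case implies $Q^1=Q^2$" deduction, which I would make rigorous via the pointwise characterization of equality in Jensen together with a null-set argument in $t$.
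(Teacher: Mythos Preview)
Your argument is correct and follows essentially the same route as the paper. After discarding the geodesic-midpoint idea (rightly, since it does not preserve incompressibility), you land on the arithmetic mean $\rho^\alpha = \tfrac12(\rho^{1,\alpha}+\rho^{2,\alpha})$ of two curves coupled so that their endpoints agree; this is exactly the paper's construction, phrased parametrically rather than via disintegration. The paper builds the coupling explicitly as the fiberwise product $Q^1_{\rho_0,\rho_1}\otimes Q^2_{\rho_0,\rho_1}$ over $\gamma$, whereas you leave the within-fiber coupling unspecified; this is harmless, since your equality-case argument (if $\rho^{1,\alpha}=\rho^{2,\alpha}$ $\theta$-a.e.\ then the coupling is concentrated on the diagonal, hence $Q^1=Q^2$) works for any coupling with matching endpoints. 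The remaining checks---incompressibility by linearity of $m_t$, the action bound by convexity of $A$, and the strict entropy inequality from strict convexity of $\Hen$ together with finiteness a.e.\ in $(t,\alpha)$---match the paper's steps.
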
 

\begin{proof}
As $Q \mapsto H_Q$ is linear, it is not sufficient to consider the mean of $Q^1$ and $Q^2$. Instead, we will need to take means in $\Gamma$. In order to do so, we disintegrate $Q^1$ and $Q^2$ w.r.t. $e_{\{ 0,1 \}} = (e_0, e_1)$. We obtain two families $Q^1_{\rho_0, \rho_1}$ and $Q^2_{\rho_0, \rho_1}$ of $W_2$-traffic plans indexed by $(\rho_0, \rho_1) \in \Gamma_{\{ 0, 1 \}} = \Prob(\Omega)^2$. We define $Q$ by its disintegration w.r.t. $e_{\{ 0,1 \}}$: we set $Q := Q_{\rho_0, \rho_1} \otimes \gamma$ where $Q_{\rho_0, \rho_1}$ is taken to be the image measure of $Q^1_{\rho_0, \rho_1} \otimes Q^2_{\rho_0, \rho_1}$ by the map $(\rho^1, \rho^2) \mapsto (\rho^1 + \rho^2)/2$ (where the $+$ refers to the usual affine structure on $\Gamma$). In other words, for any $a \in C(\Gamma)$, 
\begin{equation*}
\int_\Gamma a(\rho) \ddr Q(\rho) := \int_{\Gamma_{\{ 0,1 \}}} \left( \int_\Gamma a \left[ \frac{\rho^1 + \rho^2}{2} \right] \ddr Q^1_{\rho_0, \rho_1}(\rho^1) \ddr Q^2_{\rho_0, \rho_1}(\rho^2) \right) \ddr \gamma(\rho_0, \rho_1).
\end{equation*}
As $(e_0,e_1) \# Q^1_{\rho_0, \rho_1}$ and $(e_0,e_1) \# Q^2_{\rho_0, \rho_1}$ are Dirac masses concentrated on $(\rho_0, \rho_1)$, we can easily see that $Q \in \Probbc(\Gamma)$. The incompressibility constraint is straightforward to obtain: for any $a \in C(\Omega)$ and any $t \in \intervalleff{0}{1}$, 
\begin{align*}
\int_\Omega a(x) \ddr [m_t(Q)](x)&  = \int_{\Gamma_{\{ 0,1 \}}} \left( \int_\Gamma \left[ \int_\Omega  a(x) \frac{\ddr \rho^1_t(x) + \ddr \rho^2_t(x)}{2} \right] \ddr Q^1_{\rho_0, \rho_1}(\rho^1) \ddr Q^2_{\rho_0, \rho_1}(\rho^2) \right) \ddr \gamma(\rho_0, \rho_1) \\
& =\int_{\Gamma_{\{ 0,1 \}}} \left( \int_\Gamma \left[ \int_\Omega  a(x) \frac{\ddr \rho^1_t(x)}{2} \right] \ddr Q^1_{\rho_0, \rho_1}(\rho^1) + \int_\Gamma \left[ \int_\Omega  a(x) \frac{\ddr \rho^2_t(x)}{2} \right] \ddr Q^2_{\rho_0, \rho_1}(\rho^2) \right) \ddr \gamma(\rho_0, \rho_1) \\
& = \frac{1}{2} \int_\Omega a(x) \ddr x + \frac{1}{2} \int_\Omega a(x) \ddr x \\
& = \int_\Omega a(x) \ddr x. 
\end{align*}  
Thus, we have $Q \in \Proba(\Gamma)$. To handle the action, let us just remark that for any $\rho^1$ and $\rho^2$ in $\Gamma$, by convexity of $A$, 
\begin{equation*}
A \left( \frac{\rho^1 + \rho^2}{2} \right) \leqslant \frac{1}{2} \left( A(\rho^1) + A(\rho^2) \right).
\end{equation*}
Integrating this inequality w.r.t. to $Q^1_{\rho_0, \rho_1} \otimes Q^2_{\rho_0, \rho_1}$ and then w.r.t. $\gamma$ gives the result. We use the same kind of reasoning for the entropy, but this functional is strictly convex. Hence, for any $t \in \intervalleff{0}{1}$, 
\begin{equation*}
\Hen \left( \frac{\rho^1_t + \rho^2_t}{2} \right) \leqslant \frac{1}{2} \left( \Hen(\rho^1_t) + \Hen(\rho^2_t) \right)
\end{equation*}
with a strict inequality if $\rho^1_t \neq \rho^2_t$ and if the r.h.s. is finite. Integrating w.r.t. $t$ and w.r.t. $Q^1_{\rho_0, \rho_1} \otimes Q^2_{\rho_0, \rho_1}$ we get, 
\begin{multline*}
\int_\Gamma \left( \int_0^1 \Hen \left[ \frac{\rho^1_t + \rho^2_t}{2} \right] \ddr t \right) \ddr Q^1_{\rho_0, \rho_1}(\rho^1) \ddr Q^2_{\rho_0, \rho_1}(\rho^2) \leqslant \\
\frac{1}{2} \left(  \int_\Gamma \left( \int_0^1 \Hen [\rho^1_t] \ddr t \right) \ddr Q^1_{\rho_0, \rho_1}(\rho^1) + \int_\Gamma \left( \int_0^1 \Hen [\rho^2_t] \ddr t \right) \ddr Q^2_{\rho_0, \rho_1}(\rho^2) \right),
\end{multline*} 
with a strict inequality if $Q^1_{\rho_0, \rho_1} \neq Q^2_{\rho_0, \rho_1}$ and if the r.h.s. is finite. Then, we integrate w.r.t. $\gamma$ and notice that, as $Q^1 \neq Q^2$, then $Q^1_{\rho_0, \rho_1} \neq Q^2_{\rho_0, \rho_1}$ for a $\gamma$-non negligible sets of $(\rho_0, \rho_1)$, and as $Q^1$ and $Q^2 \in \Proba^H(\Gamma)$, the r.h.s. of the equation above is finite for $\gamma$-a.e. $(\rho_0, \rho_1)$. Using Fubini's theorem, we are led to the announced conclusion.
\end{proof}

\section{Analysis of the discrete problem}
\label{section_discrete problem}

As we explained before, to tackle the continuous problem \eqref{equation_continuous_problem}, we will introduce a discretized (in time) variational problem that approximates the continuous one. In this section, we give a brief heuristic justification of it, prove its well-posedness, and show that the discrete averaged entropy is convex. In the proof of the latter property, we use the \emph{flow interchange} technique that was previously introduced in \cite{Matthes2009}.

The discrete problem is obtained by performing three different approximations: 
\begin{itemize}
\item[•] We consider a number of discrete times $N +1 \geqslant 2$. We will use $\tau := 1/N$ as a notation for the time step. The set $T^N \subset \intervalleff{0}{1}$ will stand for the set of all discrete times, namely
\begin{equation*}
T^N := \left\{ k \tau \ :  \ k = 0,1, \ldots, N \right\}.
\end{equation*}
In particular, $\Gamma_{T^N} = \Prob(\Omega)^{N+1}$. We will work with $W_2$-traffic plans on $\Gamma_{T^N}$, i.e. elements of $\Prob(\Gamma_{T^N})$. According to the representation of the action \eqref{equation_representation_A_sup}, we expect that for a curve $\rho \in \Gamma$, 
\begin{equation*}
A(\rho) \simeq \sum_{k=1}^N \frac{W_2^2(\rho_{(k-1) \tau}, \rho_{k \tau})}{2 \tau}. 
\end{equation*}  
\item[•] The incompressibility constraint will be relaxed. For any $k \in \{ 1, 2, \ldots, N-1 \}$, we penalize the densities $m_{k \tau}(Q)$ which are away from the Lebesgue measure by adding a term $\C_q(m_{k \tau}(Q))$, where $\C_q$ is defined in \eqref{equation_definition_power}. As explained in Section \ref{section_preliminaries}, this term is positive and vanishes if and only if $m_{k \tau}(Q) = \Leb$, moreover it goes to $+ \infty$ as $q \to + \infty$ if $m_{k \tau}(Q) \neq \Leb$.     
\item[•] We will also add an entropic penalization, i.e. a discretized version of 
\begin{equation*}
\lambda \int_0^1 H_Q(t) \ddr t,
\end{equation*} 
with $\lambda$ a small parameter. This term explains why we select, at the limit $\lambda \to 0$, the minimizers whose total entropy is minimal. It is crucial because it enables us to show that the averaged entropy of the discrete problem converges pointwisely to the averaged entropy of the continuous problem. This pointwise convergence is necessary to ensure that the averaged entropy of the continuous problem is convex. In particular, the limit $\lambda \to 0$ must be taken after $N \to + \infty$ and $q \to + \infty$.   
\end{itemize}  

Let us state formally our discrete minimization problem. We fix $N \geqslant 1$ ($\tau := 1 / N$), $q > 1$ and $\lambda > 0$ and define $T^N = \{ k \tau \ : \ k = 0, 1, \ldots, N \}$. We denote by $\A^{N, q, \lambda} : \Prob(\Gamma_{T^N}) \to \intervalleff{0}{+\infty}$ the functional defined by, for any $Q \in \Prob(\Gamma_{T^N})$, 
\begin{equation*}
\A^{N, q, \lambda} (Q) := \sum_{k=1}^N \int_{\Gamma_{T^N}} \frac{W_2^2(\rho_{(k-1) \tau}, \rho_{k \tau})}{2 \tau} \ddr Q(\rho) + \sum_{k=1}^{N-1} \C_q(m_{k \tau}(Q)) + \lambda \sum_{k=1}^{N-1} \tau H_{Q}\left( k \tau \right).
\end{equation*}
The \emph{Discrete Problem} consists in minimizing this functional under the only constraint that the initial and final values are coupled through $\gamma$, the set of such $W_2$-traffic plans being $\Probbc(\Gamma_{T^N})$ (cf. Definition \ref{definition_admissible_traffic_plans}): 
\begin{equation}
\tag{DP}
\label{equation_discrete_problem}
\min \left \{ \A^{N,q, \lambda}(Q)\ :  \ Q \in \Probbc(\Gamma_{T^N}) \right\}.
\end{equation} 
A solution of the discrete problem is a $Q \in \Probbc(\Gamma_{T^N})$ with $\A^{N,q, \lambda}(Q) < + \infty$ which minimizes $\A^{N,q, \lambda}$.

\begin{prop}
The discrete problem \eqref{equation_discrete_problem} admits a solution. 
\end{prop}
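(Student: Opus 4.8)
The plan is to apply the direct method of the calculus of variations in the space $\Probbc(\Gamma_{T^N})$, exactly as in the proof of Theorem \ref{theorem_existence_solution_CP}. Since $\Gamma_{T^N} = \Prob(\Omega)^{N+1}$ is a compact metric space (finite product of compact metric spaces), $\Prob(\Gamma_{T^N})$ is itself compact for the weak topology. The set $\Probbc(\Gamma_{T^N})$ is closed by Proposition \ref{proposition_Padmissible_closed} (applied with $S = T^N$, which contains $0$ and $1$), hence compact. So it only remains to check that $\A^{N,q,\lambda}$ is lower semi-continuous on $\Prob(\Gamma_{T^N})$ and that it is not identically $+\infty$ on $\Probbc(\Gamma_{T^N})$; then any minimizing sequence has a weakly convergent subsequence whose limit is admissible and realizes the infimum, and the infimum is finite.

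First I would treat the lower semi-continuity term by term. The map $\rho \mapsto W_2^2(\rho_{(k-1)\tau}, \rho_{k\tau})$ is continuous on $\Gamma_{T^N}$ (composition of the continuous evaluation maps $e_{(k-1)\tau}, e_{k\tau}$ with the continuous function $W_2^2$ on $\Prob(\Omega)^2$, continuity of $W_2$ being the fact that it metrizes weak convergence), so it is l.s.c. and nonnegative, and by Lemma \ref{lemma_lsc_compact_averaged} the averaged quantity $Q \mapsto \int_{\Gamma_{T^N}} \frac{W_2^2(\rho_{(k-1)\tau},\rho_{k\tau})}{2\tau}\ddr Q(\rho)$ is l.s.c. on $\Prob(\Gamma_{T^N})$. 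For the congestion term, $Q \mapsto m_{k\tau}(Q)$ is continuous (remarked right after Definition \ref{definition_m_t}) and $\C_q$ is l.s.c. on $(\Prob(\Omega),W_2)$ by Proposition \ref{proposition_proprties_H_Cp}, so $Q \mapsto \C_q(m_{k\tau}(Q))$ is l.s.c. as a composition of a continuous map with an l.s.c. one. For the entropic term, $\Hen$ is l.s.c. and nonnegative on $\Prob(\Omega)$, so by Lemma \ref{lemma_lsc_compact_averaged} each $Q \mapsto H_Q(k\tau) = \int_{\Gamma_{T^N}} \Hen(\rho_{k\tau})\ddr Q(\rho)$ is l.s.c. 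Since $\lambda, \tau > 0$ and all three families of terms are nonnegative, the finite sum $\A^{N,q,\lambda}$ is l.s.c. as a sum of l.s.c. functions.

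Next I would exhibit a competitor with finite cost, so that the minimum is not trivially $+\infty$. One clean choice is to push the admissible-with-finite-action traffic plan produced in the proof of Theorem \ref{theorem_existence_solution_CP} (whose existence uses that $\Omega$, being convex, is the Lipschitz measure-preserving image of the unit cube) through the restriction map $e_{T^N} : \Gamma \to \Gamma_{T^N}$: the resulting $Q^0 := e_{T^N} \# Q \in \Prob(\Gamma_{T^N})$ still satisfies the boundary coupling $\gamma$, its first sum is bounded by $\mathcal{A}(Q) < +\infty$ via the representation formula \eqref{equation_representation_A_sup}, its congestion term vanishes because incompressibility of $Q$ gives $m_{k\tau}(Q^0) = \Leb$, and its entropic term is finite because $\C_q(m_{k\tau}(Q^0)) = 0$ forces... rather, one uses that for any fixed $k$ one may instead further compose with a tiny amount of heat flow at the intermediate times: replacing $\rho_{k\tau}$ by $\Phi_\varepsilon \rho_{k\tau}$ for $1 \le k \le N-1$ keeps the boundary data, only mildly increases each $W_2^2$ term (using \eqref{equation_heat_flow_decreases_W} and the triangle inequality, or more simply boundedness of $W_2$ on the compact $\Prob(\Omega)$), keeps $m_{k\tau} = \Leb$ by linearity of $\Phi_\varepsilon$ and of $m_{k\tau}$, and makes the entropy finite since $\Hen(\Phi_\varepsilon \mu) \le C_\varepsilon$ uniformly in $\mu$. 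Either way one gets a competitor of finite cost. I expect the mild obstacle to be precisely this last point — checking that the entropic penalization, which is the only term that can be $+\infty$ on an otherwise reasonable traffic plan, can be made finite without destroying admissibility — but the regularizing bound $\Hen(\Phi_s\mu)\le C_s$ recorded just before the Evolution Variational Inequality, together with linearity of the heat flow, settles it. With l.s.c., compactness of the admissible set, and a finite competitor in hand, the direct method yields a minimizer.
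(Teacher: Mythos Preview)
Your argument is correct: the lower semi-continuity and compactness parts match the paper's proof essentially verbatim. The only difference is in the choice of a finite-cost competitor. The paper takes a much simpler one: let $Q$ be the $W_2$-traffic plan whose boundary marginal is $\gamma$ and for which $e_{k\tau}\#Q$ is the Dirac mass at $\Leb$ for every $k\in\{1,\ldots,N-1\}$. Then $\Hen(\Leb)=0$ kills the entropic penalization outright, $m_{k\tau}(Q)=\Leb$ kills the congestion term, and the action term reduces to $\int_{\Gamma_{\{0,1\}}}\frac{W_2^2(\rho_0,\Leb)+W_2^2(\Leb,\rho_1)}{2\tau}\ddr\gamma$, which is finite since $W_2$ is bounded by the diameter of $\Omega$. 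Your route --- restrict a continuous admissible plan to $T^N$, then regularize the intermediate marginals by the heat flow --- also works, but it invokes Theorem \ref{theorem_existence_solution_CP}, the uniform bound $\Hen(\Phi_s\mu)\leqslant C_s$, and the linearity of the heat flow, none of which are needed here. (Incidentally, your parenthetical about the $W_2^2$ terms ``only mildly increasing'' under one-sided heat flow is not quite right --- \eqref{equation_heat_flow_decreases_W} requires the flow on \emph{both} arguments --- but your fallback to the diameter bound is fine.) The paper's competitor is the cleaner choice.
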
 

\begin{proof}
We can see that $\A^{N,q, \lambda}$ is a positive l.s.c. functional. Lower semi-continuity of the discretized action and of the entropic penalization are not difficult to see thanks to Lemma \ref{lemma_lsc_compact_averaged}. Moreover, $Q \mapsto \C_q(m_t(Q))$ is the composition of the linear and continuous map $Q \mapsto m_t(Q)$ and of the l.s.c. map $\C_q$, hence is l.s.c.

As the space $\Prob(\Gamma_{T^N}) = \Prob( \Prob(\Omega)^{N+1} )$ is compact, $\Probbc(\Gamma_{T^N})$ is also a compact space, thus it is enough to show that there exists one $Q \in \Probbc(\Gamma_{T^N})$ such that $\A^{N, q, \lambda}(Q) < + \infty$. We take $Q$ to be equal to $\gamma$ on the endpoints, and such that $e_{k \tau} \# Q$ is a Dirac mass concentrated on the Lebesgue measure $\Leb$ for any $k \in \{ 1, 2, \ldots, N-1 \}$. As $\Hen(\Leb) = 0$ and as the incompressibility constraint $m_{k \tau}(Q) = \Leb$ is satisfied for every $k \in \{ 0, 1, \ldots, N \}$, we can see that for this $Q$ we have 
\begin{equation*}
\A^{N, q, \lambda}(Q) = \int_{\Gamma_{\{ 0,1 \}}} \frac{W_2^2(\rho_0, \Leb) + W_2^2(\Leb, \rho_1)}{2 \tau} \ddr \gamma (\rho). 
\end{equation*}  
As the Wasserstein distance is uniformly bounded by the diameter of $\Omega$, the r.h.s. of the above equation is finite. The conclusion derives from a straightforward application of the direct method of calculus of variations. 
\end{proof}

One could show that the discrete problem \eqref{equation_discrete_problem} admits a unique solution (it is basically the same proof as Proposition \ref{proposition_strict_convexity_entropy}), but we will not need it. The key result of this section is the following:

\begin{theo}
\label{theorem_convexity_entropy_discrete}
Let $Q \in \Probbc(\Gamma_{T^N})$ be a solution of the discrete problem \eqref{equation_discrete_problem}. Then the function $k \in \{ 0, 1,\ldots, N \} \mapsto H_Q(k \tau)$ is convex, i.e. for every $k \in \{ 1, 2, \ldots, N-1 \}$, 
\begin{equation}
\label{equation_convexity_entropy_discrete}
H_Q \left( k \tau \right) \leqslant \frac{1}{2} H_Q \left( (k-1) \tau \right) + \frac{1}{2} H_Q \left( (k+1) \tau \right).
\end{equation} 
\end{theo}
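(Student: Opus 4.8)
The idea is to use the \emph{flow interchange} technique: we perturb the optimizer $Q$ by letting the $k$-th marginal evolve along the heat flow for a short time $s$, keeping all other times fixed, and compare with $Q$ itself. Fix $k \in \{1, \ldots, N-1\}$. Since $Q$ solves \eqref{equation_discrete_problem}, we need $Q$-a.e. curve $\rho$ to have finite action at the discrete level, so in particular $W_2(\rho_{(k-1)\tau}, \rho_{k\tau})$ and $W_2(\rho_{k\tau}, \rho_{(k+1)\tau})$ are finite; moreover the entropic penalization being finite forces $H_Q(j\tau) < +\infty$ for every interior $j$, hence for $Q$-a.e. $\rho$ the measure $\rho_{k\tau}$ is absolutely continuous with $\Hen(\rho_{k\tau}) < +\infty$. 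This is what allows us to apply the heat-flow estimates.

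\textbf{Construction of the competitor.} For $s \geqslant 0$, define $\Psi_s : \Gamma_{T^N} \to \Gamma_{T^N}$ by $(\Psi_s \rho)_{j\tau} = \rho_{j\tau}$ for $j \neq k$ and $(\Psi_s \rho)_{k\tau} = \Phi_s(\rho_{k\tau})$, and set $Q^s := \Psi_s \# Q$. Then $Q^s$ still belongs to $\Probbc(\Gamma_{T^N})$ since the endpoints $j=0,N$ are untouched. By optimality of $Q$, we have $\A^{N,q,\lambda}(Q) \leqslant \A^{N,q,\lambda}(Q^s)$ for all $s > 0$, hence
\begin{equation*}
0 \leqslant \liminf_{s \to 0, \, s > 0} \frac{\A^{N,q,\lambda}(Q^s) - \A^{N,q,\lambda}(Q)}{s}.
\end{equation*}
Now I would estimate this incremental quotient term by term. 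Only the terms involving time $k\tau$ change: the two Wasserstein terms $\frac{W_2^2(\rho_{(k-1)\tau}, \rho_{k\tau})}{2\tau}$ and $\frac{W_2^2(\rho_{k\tau}, \rho_{(k+1)\tau})}{2\tau}$, the congestion term $\C_q(m_{k\tau}(Q))$, and the entropic term $\lambda \tau H_Q(k\tau)$. For the two Wasserstein terms, integrating the \eqref{equation_EVI} inequality against $Q$ (and using dominated convergence, the Wasserstein distance being bounded by the diameter of $\Omega$) gives
\begin{equation*}
\limsup_{s \to 0, \, s>0} \int_{\Gamma_{T^N}} \frac{W_2^2(\Phi_s \rho_{k\tau}, \rho_{(k\pm 1)\tau}) - W_2^2(\rho_{k\tau}, \rho_{(k\pm 1)\tau})}{2s} \ddr Q(\rho) \leqslant H_Q((k\pm 1)\tau) - H_Q(k\tau).
\end{equation*}
For the entropic term, since $\Hen$ decreases along the heat flow, $H_{Q^s}(k\tau) \leqslant H_Q(k\tau)$, so this term contributes something non-positive to the numerator, which goes in the favorable direction. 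For the congestion term, the heat flow only makes the density more spread out; more precisely, $\C_q$ is geodesically convex and decreasing along its own gradient flow, but the cleanest argument is that the heat semigroup contracts the $L^q$ norm (it is a submarkovian contraction semigroup on every $L^q$, $q > 1$), hence $\C_q(\Phi_s \mu) \leqslant \C_q(\mu)$; since $m_{k\tau}(Q^s) = \Phi_s(m_{k\tau}(Q))$ by linearity of both the heat flow and the averaging $m_{k\tau}$, the congestion term also decreases, contributing non-positively. Combining, dividing the two Wasserstein contributions by $\tau$ (they each carry a $\frac{1}{2\tau}$), we obtain
\begin{equation*}
0 \leqslant \frac{1}{\tau}\Bigl( H_Q((k-1)\tau) - H_Q(k\tau) \Bigr) + \frac{1}{\tau}\Bigl( H_Q((k+1)\tau) - H_Q(k\tau) \Bigr),
\end{equation*}
which rearranges to \eqref{equation_convexity_entropy_discrete}.

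\textbf{Main obstacle.} The delicate point is the rigorous handling of the limsup/liminf and the exchange of the limit in $s$ with the integral against $Q$: one needs a one-sided domination to pass from the pointwise \eqref{equation_EVI} bound to the integrated inequality, and one must be careful that $H_Q(k\tau)$ is finite (guaranteed by the entropic penalization being finite on the optimizer) so that the right-hand side is not $+\infty - \infty$. A subtlety is that \eqref{equation_EVI} is stated as a $\limsup$ of difference quotients of $s \mapsto W_2^2(\Phi_s\mu, \nu)$, not a derivative; but since the numerator of the full incremental quotient is, for each fixed $s$, already bounded above by a finite quantity via optimality, and the heat-flow-monotone terms can be discarded, a careful application of Fatou's lemma to the negative parts (or equivalently reverse Fatou with the uniform bound $W_2^2 \leqslant \mathrm{diam}(\Omega)^2$) closes the argument. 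I would also record explicitly that $s \mapsto W_2^2(\Phi_s\rho_{k\tau}, \rho_{(k\pm1)\tau})$ is non-increasing by \eqref{equation_heat_flow_decreases_W} applied with one argument constant — no, more carefully, by the fact that $\Phi_s$ is a $W_2$-contraction only when applied to both arguments; instead monotonicity here is not needed, only the one-sided \eqref{equation_EVI} bound together with the uniform upper bound to justify reverse Fatou.
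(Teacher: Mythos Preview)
Your approach is essentially identical to the paper's: perturb the $k$-th component by the heat flow, use optimality, discard the congestion term (heat flow contracts $L^q$) and the entropic term (heat flow decreases $\Hen$), and bound the two Wasserstein difference quotients via \eqref{equation_EVI}.

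The one point that is not yet right is the domination you invoke to pass the $\limsup$ inside the $Q$-integral. The bound $W_2^2 \leqslant \mathrm{diam}(\Omega)^2$ does \emph{not} control the difference quotient
\[
\frac{W_2^2(\Phi_s \rho_{k\tau}, \rho_{(k\pm1)\tau}) - W_2^2(\rho_{k\tau}, \rho_{(k\pm1)\tau})}{2s}
\]
uniformly in $s$ (it only gives an upper bound of order $1/s$), so neither dominated convergence nor reverse Fatou goes through with that choice. The paper's fix is to use \eqref{equation_EVI} itself at every intermediate time together with the positivity $\Hen \geqslant 0$: this yields, for all $s>0$,
\[
\frac{W_2^2(\Phi_s \rho_{k\tau}, \rho_{(k\pm1)\tau}) - W_2^2(\rho_{k\tau}, \rho_{(k\pm1)\tau})}{2s} \leqslant \Hen(\rho_{(k\pm1)\tau}) - \Hen(\Phi_s \rho_{k\tau}) \leqslant \Hen(\rho_{(k\pm1)\tau}),
\]
and $\Hen(\rho_{(k\pm1)\tau})$ is $Q$-integrable precisely when $H_Q((k\pm1)\tau) < +\infty$. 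For interior neighbours this holds because the entropic penalization is finite; for $k=1$ or $k=N-1$ one disposes of the possibility $H_Q(0)=+\infty$ or $H_Q(1)=+\infty$ by noting that \eqref{equation_convexity_entropy_discrete} is then trivially true. With this domination in place, reverse Fatou applies and your argument is complete.
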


\begin{proof}
As $\A^{N, q, \lambda}(Q)$ is finite we know that for every $k \in \{ 1,2, \ldots, N-1 \}$, $H_Q(k \tau) < + \infty$ and $m_{k \tau}(Q) \in L^q(\Omega)$. Let us remark that if $H_Q(0) = + \infty$ then there is nothing to prove in equality \eqref{equation_convexity_entropy_discrete} for $k=1$ (the r.h.s. being infinite); and, equivalently, if $H_Q(1) = + \infty$ there is nothing to prove for $k=N-1$. So from now on, we fix $k \in \{ 1, 2, \ldots, N-1 \}$ such that $\dst{H_Q \left( (k-1) \tau \right)}$, $\dst{H_Q \left( k \tau \right)}$ and $\dst{H_Q \left( (k+1) \tau \right)}$ are finite, and it is enough to show \eqref{equation_convexity_entropy_discrete} for such a $k$.

We recall that $\Phi : \intervallefo{0}{+ \infty} \times \Prob(\Omega) \to \Prob(\Omega)$ denotes the heat flow, let us call $\Phi^k : \intervallefo{0}{+ \infty} \times \Gamma_{T^N} \to \Gamma_{T^N}$ the heat flow acting only on the $k$-th component: for any $s \geqslant 0$, $\rho \in \Gamma_{T^N}$ and $l \in \{ 0, 1, \ldots, N \}$, 
\begin{equation*}
\Phi^k_s(\rho)(l \tau) := \begin{cases}
\Phi_s (\rho_{l \tau}) & \text{if } l = k, \\
\rho_{l \tau}& \text{if } l \neq k.
\end{cases}
\end{equation*}
If $s \geqslant 0$, it is clear that $\Phi^k_s$ leaves unchanged the boundary values, thus $\Phi_s^k \# Q \in \Probbc(\Gamma_{T^N})$, and therefore by optimality of $Q$ we have that 
\begin{equation}
\label{equation_Q_better_Q_along_flow}
\A^{N, q, \lambda}(Q) \leqslant \A^{N,q, \lambda}(\Phi^k_s \# Q).
\end{equation}  
Let us expand this formula. We can see (by definition of $H_Q$) that
\begin{equation*}
H_{\Phi_s^k \# Q} \left( l \tau \right) = \begin{cases}
\dst{\int_{\Gamma_{T^N}} \Hen(\Phi_s [\rho_{l \tau}]) \ddr Q (\rho)} & \text{if } l = k, \\
\dst{H_Q \left( l \tau \right)} & \text{if } l \neq k.
\end{cases}
\end{equation*}
Concerning the term $m_{l \tau}(Q)$, the linearity of the flow enables us to write
\begin{equation*}
m_{l \tau}(\Phi^k_s \# Q) = \begin{cases}
\Phi_s \left( m_{l \tau}[Q] \right) & \text{if } l = k, \\ 
m_{l \tau}(Q) & \text{if } l \neq k.
\end{cases}
\end{equation*}
Let us underline that the linearity of the heat flow is crucial to handle the congestion term. Our proof would not have worked if we would have wanted to show the convexity (w.r.t. time) of a functional (different from the entropy) whose gradient flow in the Wasserstein space were not linear. We can rewrite \eqref{equation_Q_better_Q_along_flow} in the following form (all the terms that do not involve the time $k \tau$ cancel): 
\begin{multline*}
\int_{\Gamma_{T^N}} \frac{W_2^2(\rho_{(k-1) \tau}, \rho_{k \tau}) + W_2^2(\rho_{k \tau}, \rho_{(k+1) \tau})}{2 \tau} \ddr Q(\rho) + \C_q(m_{k \tau}(Q)) + \lambda \tau \int_{\Gamma_{T^N}} \Hen(\rho_{k \tau}) \ddr Q(\rho) \\
\leqslant \int_{\Gamma_{T^N}} \frac{W_2^2(\rho_{(k-1) \tau}, \Phi_s \rho_{k \tau}) + W_2^2(\Phi_s \rho_{k \tau}, \rho_{(k+1) \tau})}{2 \tau} \ddr Q(\rho) + \C_q(\Phi_s \left( m_{l \tau}[Q] \right)) + \lambda \tau \int_{\Gamma_{T^N}} \Hen(\Phi_s \rho_{k \tau}) \ddr Q(\rho).
\end{multline*} 
It is a well known fact that the heat flows decreases the $L^q$ norm, thus $\C_q(\Phi_s \left( m_{k \tau}[Q] \right)) \leqslant \C_q(m_{k \tau}(Q))$. It is also a well known fact the the heat flow decreases the entropy (it is for example encoded in \eqref{equation_EI}), thus 
\begin{equation*}
\int_{\Gamma_{T^N}} \Hen(\Phi_s \rho_{k \tau}) \ddr Q(\rho) \leqslant \int_{\Gamma_{T^N}} \Hen(\rho_{k \tau}) \ddr Q(\rho). 
\end{equation*}
Therefore, multiplying by $\tau$ and dividing by $s$, we are left with the following inequality, valid for any $s > 0$: 
\begin{equation*}
\int_{\Gamma_{T^N}} \frac{W_2^2(\rho_{(k-1) \tau}, \Phi_s \rho_{k \tau}) - W_2^2(\rho_{(k-1) \tau}, \rho_{k \tau})}{2s} \ddr Q(\rho)
+ \int_{\Gamma_{T^N}} \frac{W_2^2(\Phi_s \rho_{k \tau}, \rho_{(k+1) \tau}) - W_2^2( \rho_{k \tau}, \rho_{(k+1) \tau})}{2s} \ddr Q(\rho) \geqslant 0.   
\end{equation*}  
The integrand of the first integral is exactly the rate of increase of the function $s \mapsto W_2^2(\rho_{(k-1) \tau}, \Phi_s \rho_{k \tau})/2$ whose $\limsup$ is bounded, when $s \to 0$, by $\Hen(\rho_{(k-1) \tau}) - \Hen(\rho_{k \tau})$ according to \eqref{equation_EVI}. Moreover, as the entropy is positive, the same inequality \eqref{equation_EVI} shows that this rate of increase is uniformly (in $s$) bounded from above by $\Hen(\rho_{(k-1) \tau})$, and the latter is integrable w.r.t. to $Q$. Hence by applying a reverse Fatou's lemma, we see that 
\begin{equation*}
\int_{\Gamma_{T^N}} [ \Hen(\rho_{(k-1) \tau}) - \Hen(\rho_{k \tau}) ] \ddr Q(\rho) \geqslant \limsup_{s \to 0} \int_{\Gamma_{T^N}} \frac{W_2^2(\rho_{(k-1) \tau}, \Phi_s \rho_{k \tau}) - W_2^2(\rho_{(k-1) \tau}, \rho_{k \tau})}{2s} \ddr Q(\rho).
\end{equation*}
We have a symmetric minoration for $\dst{\int_{\Gamma_{T^N}} [ \Hen(\rho_{(k+1) \tau}) - \Hen(\rho_{k \tau}) ] \ddr Q(\rho)}$, hence we end up with 
\begin{align*}
0 & \leqslant \int_{\Gamma_{T^N}} [ \Hen(\rho_{(k-1) \tau}) - \Hen(\rho_{k \tau}) ] \ddr Q(\rho) + \int_{\Gamma_{T^N}} [ \Hen(\rho_{(k+1) \tau}) - \Hen(\rho_{k \tau}) ] \ddr Q(\rho) \\
& = \int_{\Gamma_{T^N}} [ \Hen(\rho_{(k-1) \tau}) + \Hen(\rho_{(k+1) \tau}) - 2 \Hen(\rho_{k \tau}) ] \ddr Q(\rho) \\
& = H_Q \left( (k-1) \tau \right) + H_Q \left( (k+1) \tau \right) - 2 H_Q \left( k \tau \right). \qedhere
\end{align*} 
\end{proof}

\section{Limit of the discrete problems to the continuous one}
\label{section_discrete_to_continuous}

In all this section, let us denote by $\bar{Q}^{N, q, \lambda}$ a solution (in fact there exists only one but this is not important) of the discrete problem \eqref{equation_discrete_problem} with parameters $N$, $q$ and $\lambda$. We want to pass to the limit in the following way: 
\begin{itemize}
\item[•] By sending $q \to + \infty$, the incompressibility constraint $m_t(Q) = \Leb$ will be strictly enforced at the discrete times $t \in T^N$. 
\item[•] Then, we will interpolate geodesically between discrete instants and show that this builds a sequence of $W_2$-traffic plans which converges to a limit $\bar{Q}^\lambda \in \Proba(\Gamma)$ when $N \to + \infty$. This $\bar{Q}^\lambda$ is expected to be a solution 
\begin{equation*}
\min \left\{ \A(Q) + \lambda \int_0^1 H_Q(t) \ddr t\ :  \ Q \in \Proba^H(\Gamma) \right\}.
\end{equation*} 
\item[•] In the end, when $\lambda \to 0$, the $W_2$-traffic plans $\bar{Q}^\lambda$ will converge to the solution $\bar{Q}$ of the original problem with minimal total entropy and $\int_0^1 H_{\bar{Q}^\lambda}(t) \ddr t$ will converge to $\int_0^1 H_{\bar{Q}}(t) \ddr t$. This is the convergence of the total entropy that enables us to get a pointwise convergence of the averaged entropy. 
\end{itemize}
Basically, we are performing three successive $\Gamma$-limits. Let us stress out that the order in which the limits are taken is important, though this importance may be hard to see under the various technical details. In particular taking the limit $\lambda \to 0$ at the end is needed to show that at the limit the selected minimizer of the continuous problem is the one with minimal total entropy (cf. the proof of Proposition \ref{proposition_barQ_lowest_entropy}). It may be possible to take the limit $N \to + \infty$ first (instead of sending $q \to + \infty$ first), but then the incompressibility constraint must be handled differently from us.   

This section is organized as follows. First we show some kind of $\Gamma-\limsup$, i.e. given continuous curves we build discrete ones whose discrete action and total entropy are close to their continuous counterparts. Then, and thanks to these constructions, we show a uniform bound on $\bar{Q}^{N, q, \lambda}$ that allows us to extract converging subsequences toward a limit $\bar{Q}$, and we show that $\bar{Q}$ is a solution of the continuous problem. Finally, we show that $\bar{Q}$ is the minimizer of $\A$ with minimal total entropy and that its averaged entropy is convex. 

\subsection{Building discrete curves from continuous ones}

Let us first show a result that will be crucial to handle Assumption \ref{assumption_finite_entropy_IF}, namely a procedure to regularize curves in order for the total entropy to be finite.  

\begin{prop}
\label{proposition_approximation_finite_entropy}
Under Assumption \ref{assumption_finite_entropy_IF}, for any $Q \in \Proba(\Gamma)$ and for any $\varepsilon > 0$, there exists $Q' \in \Proba^H(\Gamma)$ such that $\A(Q') \leqslant \A(Q) + \varepsilon$ and $H_{Q'} \in L^\infty(\intervalleff{0}{1})$.
\end{prop}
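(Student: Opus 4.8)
The plan is to regularise $Q$ by running the heat flow $\Phi$ a little along each curve in its support, taking care (i) not to move the endpoints $\rho_0,\rho_1$, so that the boundary coupling $\gamma$ is untouched, and (ii) to use only the \emph{linearity} of $\Phi$ together with the invariance $\Phi_s\Leb=\Leb$, so that the incompressibility constraint survives for free. Fix small $\eta\in(0,1/2)$ and $s>0$, and for $\rho\in\Gamma$ define $\tilde\rho\in\Gamma$ in three pieces: on $[0,\eta]$ set $\tilde\rho_t:=\Phi_{st/\eta}(\rho_0)$; on $[\eta,1-\eta]$ set $\tilde\rho_t:=\Phi_s\bigl(\rho_{\psi(t)}\bigr)$, $\psi$ being the increasing affine bijection of $[\eta,1-\eta]$ onto $[0,1]$; and on $[1-\eta,1]$ set $\tilde\rho_t:=\Phi_{s(1-t)/\eta}(\rho_1)$. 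The three pieces match at $t=\eta$ and $t=1-\eta$ (giving $\Phi_s\rho_0$, resp.\ $\Phi_s\rho_1$), while $\tilde\rho_0=\rho_0$ and $\tilde\rho_1=\rho_1$; using $\Phi_r\mu\to\mu$ as $r\to0$ one gets $\tilde\rho\in\Gamma$, and continuity of each $\Phi_s$ on $\Prob(\Omega)$ makes $\rho\mapsto\tilde\rho$ Borel, so $Q':=(\rho\mapsto\tilde\rho)\#Q\in\Prob(\Gamma)$ is well defined. It is admissible: $(e_0,e_1)\#Q'=(e_0,e_1)\#Q=\gamma$ because $\tilde\rho_0=\rho_0,\tilde\rho_1=\rho_1$; and for every $t$, linearity of $\Phi$ gives $m_t(Q')=\Phi_{\sigma(t)}\bigl(m_{\tau(t)}(Q)\bigr)$ for suitable $\sigma(t)\in[0,s]$, $\tau(t)\in[0,1]$, and $m_{\tau(t)}(Q)=\Leb$ since $Q$ is incompressible, hence $m_t(Q')=\Phi_{\sigma(t)}\Leb=\Leb$.

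The entropy is then uniformly bounded. On $[\eta,1-\eta]$ the $L^\infty$-regularising estimate of $\Phi_s$ gives $\Hen(\tilde\rho_t)=\Hen\bigl(\Phi_s\rho_{\psi(t)}\bigr)\le C_s$. On $[0,\eta]$ the Energy Identity \eqref{equation_EI} gives $\Hen(\tilde\rho_t)=\Hen(\Phi_{st/\eta}\rho_0)\le\Hen(\rho_0)$, which is finite for $Q$-a.e.\ $\rho$ under Assumption \ref{assumption_finite_entropy_IF} and averages (via $e_0\#Q=\pi_0\#\gamma$) to $H_\gamma(0)<+\infty$; symmetrically on $[1-\eta,1]$. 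Thus $H_{Q'}(t)\le\max\bigl(C_s,H_\gamma(0),H_\gamma(1)\bigr)$ for all $t$, so $H_{Q'}\in L^\infty(\intervalleff{0}{1})$ and in particular $Q'\in\Proba^H(\Gamma)$.

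For the action, I would decompose $A(\tilde\rho)$ over the three sub-intervals. On $[\eta,1-\eta]$ the chain rule for metric derivatives and the contraction \eqref{equation_heat_flow_decreases_W} bound that piece by $\tfrac1{1-2\eta}A(\rho)$. On $[0,\eta]$, $\tilde\rho$ is a reparametrisation (with constant speed-up $s/\eta$) of the heat-flow curve $r\mapsto\Phi_r\rho_0$, so by \eqref{equation_EI} its action equals $\tfrac{s}{2\eta}\bigl(\Hen(\rho_0)-\Hen(\Phi_s\rho_0)\bigr)\le\tfrac{s}{2\eta}\Hen(\rho_0)$, and likewise on $[1-\eta,1]$. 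Integrating over $Q$,
\[
\A(Q')\ \le\ \frac{1}{1-2\eta}\,\A(Q)\ +\ \frac{s}{2\eta}\bigl(H_\gamma(0)+H_\gamma(1)\bigr).
\]
Choosing first $\eta$ small enough that $\bigl(\tfrac1{1-2\eta}-1\bigr)\A(Q)\le\varepsilon/2$ (if $\A(Q)=+\infty$ there is nothing to prove about the action, and one takes $Q'$ built from any admissible plan of finite action) and then $s$ small enough that $\tfrac{s}{2\eta}\bigl(H_\gamma(0)+H_\gamma(1)\bigr)\le\varepsilon/2$ yields $\A(Q')\le\A(Q)+\varepsilon$.

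The continuity/measurability of $\rho\mapsto\tilde\rho$ and the metric-derivative chain rules are routine. The genuine difficulty, and where Assumption \ref{assumption_finite_entropy_IF} is indispensable, is the behaviour near $t=0$ and $t=1$: a typical curve in the support of $Q$ may have $\Hen(\rho_t)=+\infty$ for \emph{every} $t$ (e.g.\ a classical solution $\rho_t=\delta_{y(t)}$), so one cannot get a uniform-in-$t$ entropy bound from heat smoothing near the endpoints, where the smoothing parameter must degenerate to preserve the boundary values. Using \emph{pure} heat-flow pieces on $[0,\eta]$ and $[1-\eta,1]$ is precisely what lets one charge the near-boundary entropy to the finite averages $H_\gamma(0),H_\gamma(1)$ while paying only $O(s/\eta)$ in extra action; a single time-dependent smoothing $\Phi_{s(t)}(\rho_t)$ with $s(0)=s(1)=0$ would instead contribute an action term that need not be integrable for classical-type curves.
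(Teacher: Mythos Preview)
Your proof is correct and follows essentially the same strategy as the paper: regularise each curve by applying the heat flow $\Phi_s$ on a squeezed interior interval, and bridge to the original endpoints via heat-flow trajectories so that the boundary coupling $\gamma$ is preserved, while linearity of $\Phi$ plus $\Phi_r\Leb=\Leb$ take care of incompressibility. The paper uses a single parameter (interval $[0,s]$ with the heat flow run at natural speed, so the boundary action contribution is exactly $\tfrac12(\Hen(\rho_0)-\Hen(\Phi_s\rho_0))$) and then passes to the limit $s\to0$ via lower semi-continuity of $\Hen$ and a reverse Fatou argument; your two-parameter version ($\eta$ for the interval, $s$ for the flow time) yields the explicit bound $\A(Q')\le\tfrac1{1-2\eta}\A(Q)+\tfrac{s}{2\eta}(H_\gamma(0)+H_\gamma(1))$, which lets you conclude directly by choosing $\eta$ then $s$ without any limiting argument. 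This is a mild but genuine simplification.
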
  

\begin{proof}
Let us fix $Q \in \Proba(\Gamma)$. The idea is to use the heat flow $\Phi$ to regularize the curves: indeed, we know that if $s > 0$ is fixed, then for any $\rho \in \Gamma$, $\Hen(\Phi_s \rho_t)$ is bounded independently on $t$ and $\rho$. Moreover, applying uniformly the heat flow decreases the action. Indeed, we recall that at a discrete level the Wasserstein distance decreases along the heat flow: it is Inequality \eqref{equation_heat_flow_decreases_A}. Using the representation formula \eqref{equation_representation_A_sup} for the action, one concludes that for a fixed $s \geqslant 0$, 
\begin{equation}
\label{equation_heat_flow_decreases_A}
\int_0^1 \frac{1}{2} |\dot{\Phi_s \rho}_t |^2 \ddr t \leqslant \int_0^1 \frac{1}{2} |\dot{\rho}_t |^2 \ddr t.
\end{equation}   
However, by doing this, we lose the boundary values. To recover them, we squeeze the curve $\Phi_s \rho$ into the subinterval $\intervalleff{s}{1 - s}$, and then use the heat flow (acting on $\rho_0$) to join $\rho_0$ to $\Phi_s(\rho_0)$ on $\intervalleff{0}{s}$ and $\Phi_s(\rho_1)$ to $\rho_1$ on $\intervalleff{1-s}{1}$. Formally, for $0 < s \leqslant 1/2$, let us define the regularizing operator $R_s : \Gamma \to \Gamma$ by 
\begin{equation*}
\forall \rho \in \Gamma, \forall t \in \intervalleff{0}{1}, \ R_s(\rho)(t) := \begin{cases}
\Phi_t (\rho_0) & \text{if } 0 \leqslant t \leqslant s, \\
\Phi_s \left( \rho \left[ \frac{t-s}{1-2s} \right] \right) & \text{if } s \leqslant t \leqslant 1-s, \\
\Phi_{1 - t} (\rho_1) & \text{if } 1-s \leqslant t \leqslant 1.
\end{cases}
\end{equation*}    
The continuity of the heat flow allows us to assert that $R_s(\rho)$ is a continuous curve. As the entropy decreases along the heat flow (cf. \eqref{equation_EI}), and as $\Hen(R_s[\rho])$ is uniformly bounded on $\intervalleff{s}{1-s}$ (independently on $\rho$), we can see that there exists a constant $C_s$ depending only on $s$ such that
\begin{equation}
\label{equation_regularization_Linfty_entropy}
\forall \rho \in \Gamma, \ \forall t \in \intervalleff{0}{1}, \  \Hen[R_s(\rho)(t)] \leqslant \max(\Hen(\rho_0), \Hen(\rho_1), C_s).
\end{equation}  
To estimate the action of $R_s(\rho)$, we use the estimate \eqref{equation_heat_flow_decreases_A} on $\intervalleff{s}{1-s}$ and the identity \eqref{equation_EI} to hold the boundary terms: 
\begin{align*}
A(R_s(\rho)) & \leqslant \int_0^s \frac{1}{2} | \dot{\Phi_t \rho_0}|^2 \ddr t + \int_s^{1-s} \frac{1}{2} |\dot{\rho_{(t-s)/(1-2s)}} |^2 \ddr t + \int_{1-s}^{1} \frac{1}{2} | \dot{\Phi_{1-t} \rho_1}|^2 \ddr t \\
& = \frac{\Hen(\rho_0) - \Hen(\Phi_s [\rho_0])}{2} + \frac{1}{1-2s} \int_0^1 \frac{1}{2} |\dot{\rho_{t}} |^2 \ddr t + \frac{\Hen(\rho_1) - \Hen(\Phi_s [\rho_1])}{2} \\
& = \frac{1}{1-2s} A(\rho) +  \frac{1}{2}\left(\Hen(\rho_0) - \Hen(\Phi_s [\rho_0]) + \Hen(\rho_1) - \Hen(\Phi_s [\rho_1]) \right). 
\end{align*}
In particular, using the lower semi-continuity of the entropy $\Hen$ and the continuity w.r.t. $s$ of the heat flow, we see that if $\Hen(\rho_0)$ and $\Hen(\rho_1)$ are finite, 
\begin{equation}
\label{equation_action_decreases_regularization}
\limsup_{s \to 0} A(R_s(\rho)) \leqslant A(\rho). 
\end{equation}
We are now ready to use the regularization operator on the $W_2$-traffic plan $Q$. For a fixed $0 < s \leqslant 1/2$, we define $Q_s := R_s \# Q$. As $R_s$ does not change the boundary points, we still have $(e_0, e_1) \# Q_s = \gamma$. Integrating \eqref{equation_regularization_Linfty_entropy} w.r.t. $Q$, we get that 
\begin{equation*}
\forall t \in \intervalleff{0}{1}, \ H_{Q_s}(t)  \leqslant  H_{Q_s}(0) + H_{Q_s}(1) + C_s =  H_\gamma(0) +  H_\gamma(1) + C_s,
\end{equation*} 
and we know that the r.h.s. is finite because of Assumption \ref{assumption_finite_entropy_IF}. Concerning the action, since $\Hen(\rho_0)$ and $\Hen(\rho_1)$ are finite for $Q$-a.e. $\rho \in \Gamma$, we can integrate \eqref{equation_action_decreases_regularization} w.r.t. $Q$ by using a reverse Fatou's lemma to get
\begin{equation*}
\limsup_{s \to 0} \A(Q_s) \leqslant \A(Q).
\end{equation*}
It remains to check the incompressibility. For a fixed $t \in \intervalleff{0}{1}$, we notice that $e_t \# Q_s$ is of the form $(\Phi_r \circ e_{t'}) \# Q$ for a some $r \geqslant 0$ and $t' \in \intervalleff{0}{1}$ (for example, $r = t$ and $t'=0$ if $t \in \intervalleff{0}{s}$, and $r = s$ and $t' = (t-s)/(1-2s)$ if $t \in \intervalleff{s}{1-s}$). Thus, by linearity of the heat flow, $m_t(Q_s) = \Phi_r (m_{t'}[Q])$. But $m_{t'}(Q) = \Leb$ for any $t'$ and the Lebesgue measure is preserved by the heat flow, hence $m_t(Q_s) = \Leb$.   

Therefore, the $Q'$ that we take is just $Q_s$ for $s > 0$ small enough.  
\end{proof}

It is then possible to show how one can build a discrete curve from a continuous one in such a way that the action and the total entropy do not increase too much. This is a standard procedure which would be valid for probability on curves valued in arbitrary geodesic spaces.  

\begin{prop}
\label{proposition_gamma_limsup}
Let $Q \in \Proba^H(\Gamma)$ be an admissible $W_2$-traffic plan with finite total entropy. For any $N \geqslant 1$, we can build a $W_2$-traffic plan $Q_N \in \Probin(\Gamma_{T^N})$ in such a way that
\begin{equation*}
\limsup_{N \to + \infty} \A^{N, q, \lambda}(Q_N) \leqslant \A(Q) + \lambda \int_0^1 H_Q(t) \ddr t.
\end{equation*}
\end{prop}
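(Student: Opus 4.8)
The plan is to construct $Q_N$ by sampling the continuous curves of $Q$ at the discrete times $T^N$, keeping the endpoint coupling $\gamma$ intact, and then to control the three pieces of $\A^{N,q,\lambda}$ separately. Concretely, I would set $Q_N := e_{T^N}\#Q$, i.e. the pushforward of $Q$ under the restriction operator $\Gamma\to\Gamma_{T^N}$. Since $m_t(Q)=\Leb$ for every $t$, in particular for $t\in T^N$, the restriction does not disturb incompressibility at the discrete times, so $Q_N\in\Probin(\Gamma_{T^N})$. This already forces the congestion penalty $\sum_{k=1}^{N-1}\C_q(m_{k\tau}(Q_N))$ to vanish, which is the reason for sampling rather than doing something more elaborate. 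The endpoint coupling is preserved because $0,1\in T^N$ and $(e_0,e_1)\#Q_N=(e_0,e_1)\#Q=\gamma$.

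Next I would handle the discretized action. For a fixed $2$-absolutely continuous curve $\rho\in\Gamma$, the representation formula \eqref{equation_representation_A_sup} gives, for \emph{any} subdivision and in particular the uniform one,
\begin{equation*}
\sum_{k=1}^N \frac{W_2^2(\rho_{(k-1)\tau},\rho_{k\tau})}{2\tau} \leqslant \frac12\int_0^1|\dot\rho_t|^2\ddr t = A(\rho),
\end{equation*}
so integrating against $Q$ yields $\sum_{k=1}^N\int_{\Gamma_{T^N}}\frac{W_2^2(\rho_{(k-1)\tau},\rho_{k\tau})}{2\tau}\ddr Q_N(\rho)\leqslant\A(Q)$ for every $N$, uniformly. (For curves that are not $2$-absolutely continuous the left side is $+\infty=A(\rho)$, but such curves are $Q$-negligible since $\A(Q)<+\infty$.) So the action term is bounded by $\A(Q)$ with no error at all, and the only genuine limiting statement concerns the entropic term $\lambda\sum_{k=1}^{N-1}\tau H_Q(k\tau)$: I need
\begin{equation*}
\limsup_{N\to+\infty}\ \lambda\sum_{k=1}^{N-1}\tau\, H_Q(k\tau) \leqslant \lambda\int_0^1 H_Q(t)\ddr t.
\end{equation*}
This is where the real work lies: $t\mapsto H_Q(t)$ is only lower semicontinuous (from Lemma \ref{lemma_lsc_compact_averaged}) and in $L^1$, so Riemann sums on a fixed uniform grid need not converge to the integral — upper bounds on Riemann sums of merely $L^1$ functions can fail. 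The fix is to not insist on the uniform grid: by a standard argument one can choose, for each $N$, a grid of $N$ points on which the Riemann sum of the $L^1$ function $H_Q$ is within $1/N$ of $\int_0^1 H_Q$ (e.g. average over shifted grids, or use that $H_Q\in L^1$ together with Fatou/Fubini to find a.e. a grid spacing that works). One then redefines $T^N$ to be such an adapted near-uniform grid — the statement of the proposition only requires \emph{some} $Q_N\in\Probin(\Gamma_{T^N})$, and one is free to wiggle the grid points, provided they still include $0$ and $1$ and the spacings are $O(1/N)$ so the action estimate above still goes through (the representation formula \eqref{equation_representation_A_sup} is insensitive to which subdivision one uses, so the action bound $\leqslant\A(Q)$ survives verbatim).

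The main obstacle, then, is precisely the mismatch between pointwise $L^1$ integration and Riemann sums on a rigid grid; everything else (incompressibility, boundary coupling, vanishing congestion, the action bound) is immediate from the pushforward construction and the representation formula. Once the grid is chosen adaptively, combining the three bounds gives
\begin{equation*}
\A^{N,q,\lambda}(Q_N) \leqslant \A(Q) + 0 + \lambda\!\left(\int_0^1 H_Q(t)\ddr t + \tfrac1N\right),
\end{equation*}
and letting $N\to+\infty$ yields the claim. I would also double-check that $H_Q(k\tau)<+\infty$ at the chosen grid points, which can be arranged since $H_Q\in L^1$ implies $H_Q<+\infty$ a.e., so a grid avoiding the (null) set $\{H_Q=+\infty\}$ exists; in any case a term $H_Q(k\tau)=+\infty$ would make the inequality trivially false only if it appeared, and the adapted-grid choice excludes it.
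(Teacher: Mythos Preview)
Your approach is the paper's: sample the continuous curves on a grid, note that incompressibility at the sampled times kills the congestion term, bound the discrete action by the representation formula, and handle the entropy Riemann sum by an averaging trick over shifted grids. The paper makes this explicit via
\[
\int_0^\tau \sum_{k=1}^{N-1} H_Q(k\tau+s)\ddr s \;=\; \int_\tau^1 H_Q(t)\ddr t \;\leqslant\; \int_0^1 H_Q(t)\ddr t,
\]
which yields a shift $s_N\in(0,\tau)$ with $\tau\sum_{k=1}^{N-1} H_Q(k\tau+s_N)\leqslant\int_0^1 H_Q$.

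There is, however, a genuine gap in your execution. You cannot ``redefine $T^N$'': the set $T^N=\{k/N\}$ is fixed, $\Gamma_{T^N}$ and the functional $\A^{N,q,\lambda}$ are defined in terms of it, and $Q_N$ must live in $\Prob(\Gamma_{T^N})$. What the paper does instead is keep $T^N$ fixed and define a sampling operator $S_N:\Gamma\to\Gamma_{T^N}$ by $S_N(\rho)(k\tau)=\rho_{k\tau+s_N}$ for $1\leqslant k\leqslant N-1$ (endpoints unchanged), then set $Q_N=S_N\#Q$. The output sits on the uniform grid, but the values come from the shifted one. Once this is done correctly, your claim that the action bound ``survives verbatim'' is false: the discrete action in $\A^{N,q,\lambda}$ divides each squared distance by the \emph{uniform} spacing $\tau$, whereas the actual sampling intervals near the endpoints are $\tau+s_N$ and $\tau-s_N$. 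The representation formula \eqref{equation_representation_A_sup} only controls $\sum_k W_2^2(\rho_{t_{k-1}},\rho_{t_k})/(t_k-t_{k-1})$, not the sum with $\tau$ in the denominator. The paper accounts for this mismatch and obtains
\[
\sum_{k=1}^N\int_{\Gamma_{T^N}}\frac{W_2^2(\rho_{(k-1)\tau},\rho_{k\tau})}{2\tau}\ddr Q_N(\rho)\;\leqslant\;\A(Q)+\int_\Gamma\!\left(\int_0^{2\tau}\frac12|\dot\rho_s|^2\ddr s\right)\!\ddr Q(\rho),
\]
the extra term vanishing as $N\to\infty$ by dominated convergence (dominated by $A(\rho)$). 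So the $\limsup$ in the statement is genuinely needed for the action part as well, not only for the entropy.
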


\begin{proof}
We can assume that $\A(Q) < + \infty$. The idea is to sample each curve on a uniform grid, but not necessarily on $T^N$. Indeed, the key point in this sampling is to ensure that the discrete entropic penalization of the functional $\A^{N, q, \lambda}$ is bounded by $\lambda \int_0^1 H_Q(t) \ddr t$. Let us fix $N \geqslant 1$ and recall that $\tau = 1 / N$. We can see that
\begin{equation*}
\int_0^{\tau} \sum_{k=1}^{N-1} H_Q\left( k \tau + s \right) \ddr s  = \int_\tau^1 H_Q(t) \ddr t \leqslant \int_0^1 H_Q(t) \ddr t.
\end{equation*}
Therefore, there exists $s_N \in \intervalleoo{0}{\tau}$ such that
\begin{equation*}
\tau \sum_{k=1}^{N-1}  H_Q\left( k \tau + s_N \right)  \leqslant \int_0^1 H_Q(t) \ddr t. 
\end{equation*}
We define the sampling operator $S_N : \Gamma \to \Gamma_{T^N}$ (which samples on the grid $\dst{\left\{ k \tau + s_N \ :  \ k = 1, 2, \ldots, N-1 \right\}}$) by
\begin{equation*}
\forall \rho \in \Gamma, \forall k \in \{ 0, 1, \ldots, N \}, \  S_N(\rho) \left( k \tau \right) = \begin{cases}
\rho_0 & \text{if } k = 0, \\
\rho_1 & \text{if } k = N, \\
\rho_{k \tau + s_N} & \text{if } 1 \leqslant k \leqslant N-1.
\end{cases}
\end{equation*}
Then we simply define $Q_N := S_N \# Q$. As the initial and final values are left unchanged, it is clear that $(e_0, e_1) \# Q_N = (e_0, e_1) \# Q = \gamma$, i.e. $Q_N \in \Probbc(\Gamma_{T^N})$. By construction, we have that
\begin{equation*}
\lambda \sum_{k=1}^{N-1} \tau H_{Q_N}(k \tau)  = \lambda \tau \sum_{k=1}^{N-1}  H_Q\left( k \tau + s_N \right) \leqslant \lambda \int_0^1 H_Q(t) \ddr t. 
\end{equation*}
Moreover, as $Q \in \Proba(\Gamma)$ is incompressible, it is clear that $Q_N$ is incompressible too, hence 
\begin{equation*}
\sum_{k=1}^{N-1} \C_q(m_{k \tau}[Q_N]) = 0.   
\end{equation*} 
The last term to handle is the action. Indeed, we have to take care of the fact that we use a translated grid which is not uniform close to the boundaries. After a standard computation (which would be valid in any geodesic space) that we do not detail here, one finds that
\begin{equation*}
\sum_{k=1}^N \int_{\Gamma_{T^N}} \frac{W_2^2(\rho_{(k-1) \tau}, \rho_{k \tau})}{2 \tau} \ddr Q_N(\rho) \leqslant \A(Q) + \int_\Gamma \left( \int_{0}^{2 \tau} \frac{1}{2} |\dot{\rho}_s|^2 \ddr s \right) \ddr Q (\rho).
\end{equation*} 
For every $2$-absolutely continuous curve, it is clear that the quantity $\int_{0}^{2 \tau} \frac{1}{2} |\dot{\rho}_s|^2 \ddr s$ goes to $0$ as $N \to + \infty$ and it is dominated by $A(\rho)$ which is integrable w.r.t. $Q$. Therefore, by dominated convergence, 
\begin{equation*}
\limsup_{N \to + \infty} \left( \sum_{k=1}^N \int_{\Gamma_{T^N}} \frac{W_2^2(\rho_{(k-1) \tau}, \rho_{k \tau})}{2 \tau} \ddr Q_N(\rho) \right) \leqslant \A(Q).  
\end{equation*}  
Gluing all the inequalities we have collected on $Q_N$, we see that $\A^{N, q, \lambda}(Q_N)$ satisfies the desired asymptotic bound. 
\end{proof}

\begin{crl}
\label{corollary_uniform_bound_discrete}
Under Assumption \ref{assumption_L1_entropy} or Assumption \ref{assumption_finite_entropy_IF}, there exists $C < + \infty$, such that, uniformly in $N \geqslant 1$, $\lambda \in \intervalleof{0}{1}$ and $q>1$, we have 
\begin{equation*}
\A^{N, q, \lambda}(\bar{Q}^{N, q, \lambda}) \leqslant C. 
\end{equation*} 
\end{crl}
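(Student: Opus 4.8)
The plan is to bound $\A^{N, q, \lambda}(\bar{Q}^{N, q, \lambda})$ by testing the minimality of $\bar{Q}^{N, q, \lambda}$ against a single admissible competitor, constructed once in the continuous setting, whose discrete cost is controlled uniformly in $N$, $q$ and $\lambda$.

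First I would produce a reference traffic plan $Q \in \Proba^H(\Gamma)$ with finite action. Under Assumption \ref{assumption_L1_entropy} this $Q$ is given by hypothesis: it is a solution of \eqref{equation_continuous_problem}, hence has finite action by definition, and it lies in $\Proba^H(\Gamma)$, so $\int_0^1 H_Q(t) \ddr t < + \infty$. Under Assumption \ref{assumption_finite_entropy_IF} I would instead take any solution $Q_0$ of \eqref{equation_continuous_problem} (Theorem \ref{theorem_existence_solution_CP}), which has $\A(Q_0) < + \infty$, and apply Proposition \ref{proposition_approximation_finite_entropy} with $\varepsilon = 1$ to get $Q \in \Proba^H(\Gamma)$ with $\A(Q) \leqslant \A(Q_0) + 1 < + \infty$ and $H_Q \in L^\infty(\intervalleff{0}{1}) \subset L^1(\intervalleff{0}{1})$. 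Either way, $M := 2\A(Q) + \int_0^1 H_Q(t) \ddr t$ is a finite constant depending on none of $N$, $q$, $\lambda$.

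Next I would feed $Q$ into Proposition \ref{proposition_gamma_limsup} to obtain, for each $N \geqslant 1$, the traffic plan $Q_N = S_N \# Q$, which lies in $\Probin(\Gamma_{T^N}) \cap \Probbc(\Gamma_{T^N})$ and --- this is the point --- depends only on $N$ and $Q$, not on $q$ or $\lambda$. Minimality of $\bar{Q}^{N, q, \lambda}$ then gives $\A^{N, q, \lambda}(\bar{Q}^{N, q, \lambda}) \leqslant \A^{N, q, \lambda}(Q_N)$, and I would bound the three terms of $\A^{N, q, \lambda}(Q_N)$ separately, reusing the per-$N$ estimates from inside the proof of Proposition \ref{proposition_gamma_limsup}: the congestion term $\sum_{k=1}^{N-1} \C_q(m_{k \tau}(Q_N))$ equals $0$ since $Q_N$ is incompressible, which eliminates the $q$-dependence; the entropic term is $\lambda \tau \sum_{k=1}^{N-1} H_Q(k \tau + s_N) \leqslant \lambda \int_0^1 H_Q(t) \ddr t \leqslant \int_0^1 H_Q(t) \ddr t$ using the choice of the shifted grid together with $\lambda \leqslant 1$ and $\Hen \geqslant 0$; and the discrete-action term is $\leqslant \A(Q) + \int_\Gamma ( \int_0^{2 \tau} \frac{1}{2} |\dot{\rho}_s|^2 \ddr s ) \ddr Q(\rho) \leqslant 2\A(Q)$. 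Summing yields $\A^{N, q, \lambda}(\bar{Q}^{N, q, \lambda}) \leqslant M =: C$ for every $N \geqslant 1$, $q > 1$, $\lambda \in \intervalleof{0}{1}$.

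The main thing to be careful about is the \emph{simultaneous} uniformity in the three parameters. The $q$-uniformity is automatic once the competitor $Q_N$ is incompressible (the $\C_q$-penalization then contributes nothing whatever the value of $q$); the $\lambda$-uniformity comes from restricting to $\lambda \leqslant 1$ and from $\Hen \geqslant 0$; and the $N$-uniformity relies on the fact that the term-by-term bounds in the proof of Proposition \ref{proposition_gamma_limsup} hold for each fixed $N$ and not merely in the limit $N \to + \infty$ (if one insists on using only the $\limsup$ statement verbatim, the finitely many remaining small values of $N$ are disposed of by the crude bound $W_2 \leqslant \mathrm{diam}(\Omega)$). I do not foresee any serious difficulty beyond this bookkeeping.
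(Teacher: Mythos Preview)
Your proposal is correct and follows essentially the same approach as the paper: pick a fixed $Q \in \Proba^H(\Gamma)$ with finite action (via Assumption \ref{assumption_L1_entropy} directly or via Proposition \ref{proposition_approximation_finite_entropy} under Assumption \ref{assumption_finite_entropy_IF}), build $Q_N$ by Proposition \ref{proposition_gamma_limsup}, and compare $\bar{Q}^{N,q,\lambda}$ to $Q_N$. The paper is terser, simply setting $C := \sup_{N \geqslant 1} \A^{N,q,\lambda}(Q_N)$, whereas you spell out explicitly why this supremum is finite and independent of $q$ and $\lambda$ (incompressibility kills the $\C_q$ term; $\lambda \leqslant 1$ and the per-$N$ entropy bound from the shifted grid control the entropic term; the action bound $\leqslant 2\A(Q)$ handles the remaining piece) --- this is exactly the content the paper leaves implicit.
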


\begin{proof}
Indeed, it is enough to take $Q$ any element of $\Proba^H(\Gamma)$ with finite action (it exists by definition under Assumption \ref{assumption_L1_entropy} and we use Proposition \ref{proposition_approximation_finite_entropy} under Assumption \ref{assumption_finite_entropy_IF}), to construct $Q_N$ as in Proposition \ref{proposition_gamma_limsup}, to define $C := \sup_{N \geqslant 1} \A^{N, q, \lambda}(Q_N)$, and to use the fact that $\A^{N, q, \lambda}(\bar{Q}^{N, q, \lambda}) \leqslant \A^{N, q, \lambda}(Q_N) \leqslant C$. 
\end{proof}

\subsection{Solution of the continuous problem as a limit of discrete solutions}

To go from $W_2$-traffic plans on discrete curves to $W_2$-traffic plans on continuous ones, we will need an extension operator $E_N : \Gamma_{T^N} \to \Gamma$ that interpolates a discrete curve along geodesics in $(\Prob(\Omega), W_2)$. More precisely, 

\begin{defi}
Let $N \geqslant 1$. If $\rho \in \Gamma_{T^N}$, the curve $E_N(\rho) \in \Gamma$ is defined as the one that coincides with $\rho$ on $T^N$ and such that for any $k \in \{ 0, 1, \ldots, N-1 \}$, the restriction of $E_N(\rho)$ to $\intervalleff{k \tau}{(k+1) \tau}$ is a\footnotemark{} constant-speed geodesic joining $\rho_{k \tau}$ to $\rho_{(k+1) \tau}$. 
\end{defi}

\footnotetext{One may worry about the non uniqueness of the geodesic and hence of the fact that the extension operator $E_N$ is ill-defined. However, it is a classical result of optimal transport that the constant-speed geodesic joining two measures is unique as soon as one of the two measures is absolutely continuous w.r.t. $\Leb$. Moreover, for a traffic plan $Q \in \Prob(\Gamma_{T^N})$, if $H_Q(t) < + \infty$ for $t \in T^N$, then $Q$-a.e. $\rho$ is absolutely continuous w.r.t. $\Leb$ at time $t$. Thus as long as we work with $W_2$-traffic pans $Q$ such that $H_Q(k \tau) < + \infty$ for any $k \in \{ 1,2, \ldots, N-1 \}$ (and we leave it to the reader to check that it is the case), the operator $E_N$ is well defined.}

\noindent In particular, for any $k \in \{ 0, 1, 2, \ldots, N-1 \}$, $|\dot{E_N(\rho)}|$ is constant on $\intervalleff{k \tau}{(k+1) \tau}$ and equal to $W_2(\rho_{k \tau}, \rho_{(k+1) \tau})/ \tau$. Thus, we have the identity 
\begin{equation*}
\int_{k \tau}^{(k+1) \tau} \frac{1}{2} |\dot{E_N (\rho)}_t|^2 \ddr t = \frac{W_2^2(\rho_{k \tau}, \rho_{(k+1) \tau})}{2 \tau},
\end{equation*}
summed over $k \in \{ 0, 1, \ldots, N-1 \}$, these identities led to 
\begin{equation}
\label{equation_identity_A_discrete_continuous}
A(E_N[\rho]) = \sum_{k=1}^N \frac{W_2^2(\rho_{(k-1) \tau}, \rho_{k \tau})}{2 \tau}.
\end{equation} 
In other words, the action of the extended curve $E_N(\rho)$ is equal to the discrete one of $\rho$.   

\bigskip

We are now ready to show the convergence of $\bar{Q}^{N, q, \lambda}$ to some limit $\bar{Q} \in \Proba(\Gamma)$. We take three sequences $(N_n)_{n \in \N}$, $(q_m)_{m \in \N}$ and $(\lambda_r)_{r \in \N}$ that converge respectively to $+ \infty$, $+ \infty$ and $0$. We will not relabel the sequences when extracting subsequences. Moreover, to avoid heavy notations, we will drop the indexes $n, m$ and $r$, and $\lim_{n \to + \infty}$, $\lim_{m \to + \infty}$, $\lim_{r \to + \infty}$ will be denoted respectively by $\lim_{N \to + \infty}$, $\lim_{q \to + \infty}$ and $\lim_{\lambda \to 0}$.  

\begin{prop}
\label{proposition_convergence_barQ}
Under Assumption \ref{assumption_L1_entropy} or Assumption \ref{assumption_finite_entropy_IF}, there exists $\bar{Q} \in \Proba(\Gamma)$, and families $(\bar{Q}^{N, \lambda})_{N, \lambda} \in \Proba(\Gamma_{T^N})$, $(\bar{Q}^\lambda)_\lambda \in \Proba(\Gamma)$ such that (up to extraction) 
\begin{alignat*}{5}
\lim_{q \to + \infty} \bar{Q}^{N, q, \lambda} &=  \bar{Q}^{N, \lambda}  && \text{     in }  &\Prob(\Gamma_{T^N}), \\
\lim_{N \to + \infty} (E_N \# \bar{Q}^{N, \lambda}) &=    \bar{Q}^{\lambda} && \text{     in } &\Prob(\Gamma), \\
\lim_{\lambda \to 0} \bar{Q}^{\lambda} &=  \bar{Q} &&  \text{     in } &\Prob(\Gamma).
\end{alignat*}
\end{prop}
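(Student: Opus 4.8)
The plan is to establish each of the three limits in turn, exploiting the uniform bound from Corollary \ref{corollary_uniform_bound_discrete} together with the compactness and lower semi-continuity properties collected in Propositions \ref{proposition_lsc_compact_A}, \ref{proposition_proprties_H_Cp} and Lemma \ref{lemma_lsc_compact_averaged}, and the closedness of the admissibility sets from Proposition \ref{proposition_Padmissible_closed}. The key point throughout is that all the functionals involved (the discretized action, the congestion penalties $\C_q$, the averaged entropy $H_Q$) are l.s.c. along the relevant convergences, so that passing to the limit can only decrease them, while the uniform bound $\A^{N,q,\lambda}(\bar Q^{N,q,\lambda}) \leqslant C$ keeps everything tight.

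\medskip

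\emph{First limit ($q \to +\infty$).} Fix $N$ and $\lambda$. Since $\Prob(\Gamma_{T^N}) = \Prob(\Prob(\Omega)^{N+1})$ is compact, we can extract a subsequence $\bar Q^{N,q,\lambda} \to \bar Q^{N,\lambda}$ in $\Prob(\Gamma_{T^N})$. The discretized action and the entropic penalization are l.s.c. in $Q$ (Lemma \ref{lemma_lsc_compact_averaged}), and for each $k$ the map $Q \mapsto \C_q(m_{k\tau}(Q))$ is l.s.c. (composition of the continuous map $Q \mapsto m_{k\tau}(Q)$ with the l.s.c. functional $\C_q$). The uniform bound gives $\sum_{k=1}^{N-1}\C_q(m_{k\tau}(\bar Q^{N,q,\lambda})) \leqslant C$; since for fixed $\mu$ the value $\C_q(\mu)$ is non-decreasing in $q$ on measures of density $\geqslant 1$ somewhere and, more to the point, $\C_q(\mu) \to +\infty$ as $q \to +\infty$ unless $\mu = \Leb$, a standard argument shows the limit must satisfy $m_{k\tau}(\bar Q^{N,\lambda}) = \Leb$ for every interior $k$; combined with the boundary constraint $(e_0,e_1)\#\bar Q^{N,\lambda} = \gamma$ (which passes to the limit since $(e_0,e_1)$ is continuous and $\Probbc$ is closed) and $m_0(\gamma) = m_1(\gamma) = \Leb$, this yields $\bar Q^{N,\lambda} \in \Probin(\Gamma_{T^N}) \cap \Probbc(\Gamma_{T^N}) = \Proba(\Gamma_{T^N})$.

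\medskip

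\emph{Second limit ($N \to +\infty$).} Apply the extension operator: by \eqref{equation_identity_A_discrete_continuous}, $\A(E_N\#\bar Q^{N,\lambda}) = \sum_k \int W_2^2(\rho_{(k-1)\tau},\rho_{k\tau})/(2\tau)\, d\bar Q^{N,\lambda} \leqslant \A^{N,q,\lambda}(\bar Q^{N,q,\lambda}) \leqslant C$ uniformly (the congestion and entropy terms being nonnegative and the bound surviving the first limit by l.s.c.). Since $\A$ has compact sublevel sets (Proposition \ref{proposition_lsc_compact_A}), Lemma \ref{lemma_lsc_compact_averaged} gives that $\{\A \leqslant C\} \subset \Prob(\Gamma)$ is compact, so we extract $E_N\#\bar Q^{N,\lambda} \to \bar Q^\lambda$ in $\Prob(\Gamma)$. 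The boundary constraint again passes to the limit. Incompressibility at a general time $t \in \intervalleff{0}{1}$ is the delicate part: at the discrete times $t \in T^N$ it holds exactly, and for $t$ between two grid points one checks that $m_t(E_N\#\bar Q^{N,\lambda})$ differs from $\Leb$ by an amount controlled by the local displacement, which tends to zero as $N \to +\infty$ because the curves are equi-Hölder with exponent $1/2$ on the sublevel set $\{A \leqslant C'\}$; passing to the limit and using continuity of $t \mapsto m_t(Q)$ gives $m_t(\bar Q^\lambda) = \Leb$ for all $t$, hence $\bar Q^\lambda \in \Proba(\Gamma)$.

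\medskip

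\emph{Third limit ($\lambda \to 0$).} The bound $\A(\bar Q^\lambda) \leqslant C$ survives (by l.s.c. of $\A$ under the second limit), uniformly in $\lambda \in \intervalleof{0}{1}$, so by the same compactness we extract $\bar Q^\lambda \to \bar Q$ in $\Prob(\Gamma)$; admissibility passes to the limit exactly as before, giving $\bar Q \in \Proba(\Gamma)$. This produces the three families and $\bar Q$ with the stated convergences. The main obstacle I anticipate is the second limit, specifically verifying that incompressibility is recovered at \emph{all} times and not merely at discrete ones after interpolation — this requires the equicontinuity estimate $W_2(\rho_s,\rho_t) \leqslant \sqrt{2A(\rho)}\sqrt{t-s}$ on the sublevel set of $A$ (from the proof of Proposition \ref{proposition_lsc_compact_A}) to control how far $m_t$ of the geodesic interpolation can drift from $\Leb$ over a time interval of length $\tau$, together with a careful interchange of limits; the optimality of the limit $\bar Q$ (that it solves \eqref{equation_continuous_problem}, and with minimal total entropy) is deferred to the subsequent propositions and is not claimed here.
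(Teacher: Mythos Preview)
Your outline is correct and follows essentially the same route as the paper: compactness of $\Prob(\Gamma_{T^N})$ for the first limit, compactness of sublevel sets of $\A$ (via Lemma~\ref{lemma_lsc_compact_averaged}) for the second and third, with the boundary constraint passing to the limit by continuity of $(e_0,e_1)$ and closedness of $\Probbc$. The two places you flag as delicate are exactly where the paper supplies nontrivial detail, and your sketches there need sharpening.

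For the first limit, the phrase ``a standard argument shows the limit must satisfy $m_{k\tau}(\bar Q^{N,\lambda})=\Leb$'' hides the fact that both $q$ and the measure $m_{k\tau}(\bar Q^{N,q,\lambda})$ are moving simultaneously, so the pointwise statement $\C_q(\mu)\to+\infty$ for $\mu\neq\Leb$ does not apply directly. The paper's device is to use monotonicity of $L^q$ norms on a domain of unit measure: for any fixed $q_0\leqslant q$ one has $\|m_{k\tau}(\bar Q^{N,q,\lambda})\|_{q_0}\leqslant\|m_{k\tau}(\bar Q^{N,q,\lambda})\|_{q}\leqslant (C+1)^{1/q}$; send $q\to+\infty$ using l.s.c.\ of the $L^{q_0}$ norm, then let $q_0\to+\infty$ to force $\|m_{k\tau}(\bar Q^{N,\lambda})\|_\infty\leqslant 1$, hence equality with $\Leb$.

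For the second limit, your appeal to ``equi-H\"older on the sublevel set $\{A\leqslant C'\}$'' is slightly off: only the \emph{averaged} action $\int A\,d\bar Q^{N,\lambda}$ is bounded, not each curve's action. The fix is the one you essentially already have in hand: the curvewise estimate $W_2(\rho_{k\tau},\rho_{(k+1)\tau})\leqslant\sqrt{2\tau}\sqrt{W_2^2(\rho_{k\tau},\rho_{(k+1)\tau})/(2\tau)}$ integrated against $\bar Q^{N,\lambda}$ and combined with Cauchy--Schwarz gives $\int W_2(\rho_{k\tau},\rho_{(k+1)\tau})\,d\bar Q^{N,\lambda}\leqslant\sqrt{2C\tau}$. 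The paper then tests $m_t(E_N\#\bar Q^{N,\lambda})-\Leb$ against $C^1$ functions $a$ and uses the transport-plan representation of the geodesic to bound the error by $\|\nabla a\|_{L^\infty}$ times this integral, which vanishes as $\tau\to 0$. With these two refinements your argument coincides with the paper's.
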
 

\begin{proof}
We denote by $C$ the constant given by Corollary \ref{corollary_uniform_bound_discrete}.  

To prove the existence of $(\bar{Q}^{N, \lambda})_{N, \lambda}$, it is enough to notice that for any $N \geqslant 1$ the space $\Prob(\Gamma_{T^N})$ is compact and therefore every sequence admits a converging subsequence. By continuity of the Wasserstein distance, we know that  
\begin{equation*}
\sum_{k=1}^N \int_{\Gamma_{T^N}} \frac{W_2^2(\rho_{(k-1) \tau}, \rho_{k \tau})}{2 \tau} \ddr \bar{Q}^{N, \lambda}(\rho) = \lim_{q \to + \infty} \left( \sum_{k=1}^N \int_{\Gamma_{T^N}} \frac{W_2^2(\rho_{(k-1) \tau}, \rho_{k \tau})}{2 \tau} \ddr \bar{Q}^{N, q, \lambda}(\rho) \right) 
\leqslant C. 
\end{equation*}
To go on, we use \eqref{equation_identity_A_discrete_continuous}, namely the fact that $E_N$ transforms the discrete action into the continuous one:
\begin{equation*}
\A(E_N \# \bar{Q}^{N, \lambda})  = \int_{\Gamma_{T^N}} A(E_N(\rho)) \ddr \bar{Q}^{N, \lambda}(\rho) 
 =  \sum_{k=1}^N \int_{\Gamma_{T^N}} \frac{W_2^2(\rho_{(k-1) \tau}, \rho_{k \tau})}{2 \tau} \ddr \bar{Q}^{N, \lambda}(\rho) 
\leqslant C.
\end{equation*}
We know that the functional $\A$ is l.s.c. and that its sublevel sets are compact. Hence, we get the existence of $(\bar{Q}^\lambda)_\lambda$ such that
\begin{equation*}
\lim_{N \to + \infty} (E_N \# \bar{Q}^{N, \lambda}) = \bar{Q}^{\lambda}
\end{equation*} 
in $\Prob(\Gamma)$ and $\A(\bar{Q}^\lambda) \leqslant C$. Applying exactly the same argument, we can conclude at the existence of $\bar{Q} \in \Prob(\Gamma)$ with 
\begin{equation*}
\lim_{\lambda \to 0} \bar{Q}^{\lambda} = \bar{Q}
\end{equation*}
in $\Prob(\Gamma)$ together with $\A(\bar{Q}) \leqslant C$. 

It is easy to show that $(e_0, e_1) \# \bar{Q} = \gamma$ as we have that $(e_0, e_1) \# \bar{Q}^{N, q, \lambda} = \gamma$: this condition passes to the limit and is preserved by $E_N$. 

It is slightly more difficult to show the incompressibility. Let us first show that $\bar{Q}^{N, \lambda} \in \Probin(\Gamma_{T^N})$. We fix $N, k$ and $\lambda$. As $\A^{N, q, \lambda}(\bar{Q}^{N, q, \lambda})\leqslant C$, we see that  
\begin{equation*}
\left( \int_\Omega |m_{k \tau}( \bar{Q}^{N, q, \lambda} )|^q \right)^{1/q} \leqslant (C+1)^{1/q}.
\end{equation*}
As $\Leb(\Omega) = 1$, the $L^q$ norms are increasing with $q$. Thus, for any $q_0 \leqslant q$, we have 
\begin{equation*}
\left( \int_\Omega |m_{k \tau}( \bar{Q}^{N, q, \lambda} )|^{q_0} \right)^{1/q_0} \leqslant (C+1)^{1/q}.
\end{equation*} 
Let us take the limit $q \to + \infty$. We have that $m_{k \tau}( \bar{Q}^{N, q, \lambda} )$ converges in $\Prob(\Omega)$ to $m_{k \tau}(\bar{Q}^{N, \lambda})$. As the $L^{q_0}$ norm is l.s.c. w.r.t. the weak convergence of measures, we can see that 
\begin{equation*}
\left( \int_\Omega |m_{k \tau}( \bar{Q}^{N,\lambda} )|^{q_0} \right)^{1/q_0} \leqslant 1.
\end{equation*}
But now $q_0$ is arbitrary, thus the $L^\infty$ norm of $m_{k \tau}(\bar{Q}^{N, \lambda})$ is bounded by $1$. As we know that $m_{k \tau}(\bar{Q}^{N, \lambda})$ is a probability measure and that $\Leb(\Omega) = 1$, we deduce that $m_{k \tau}(\bar{Q}^{N, \lambda})$ is equal to $1$ $\Leb$-a.e. on $\Omega$: it exactly means that $m_{k \tau}(\bar{Q}^{N, \lambda}) = \Leb$. As $\bar{Q}^{N, \lambda}$ also satisfies the boundary conditions, $\bar{Q}^{N, \lambda} \in \Proba(\Gamma_{T^N})$.  

To show that the incompressibility constraint is satisfied by $\bar{Q}^{\lambda}$ for every $t$, we proceed as follows: let us consider $t \in \intervalleff{0}{1}$ and $N \geqslant 1$. Let $k \in \{ 0, 1, \ldots, N-1 \}$ such that $\dst{k \tau \leqslant t \leqslant (k+1) \tau }$. We denote by $s \in \intervalleff{0}{1}$ the real such that $\dst{t = (k+s) \tau}$. By definition of $E_N$, if $\rho \in \Gamma_{T^N}$, there exists $\bar{\gamma}$ an optimal transport plan between $\rho_{k \tau}$ and $\rho_{(k+1) \tau}$ (i.e. an optimal $\gamma$ in formula \eqref{equation_definition_Wasserstein_distance} with $\mu = \rho_{k \tau}$ and $\nu = \rho_{(k+1) \tau}$) such that $E_N(\rho)(t) = \pi_s \# \bar{\gamma}$ with $\pi_s : (x,y) \mapsto (1-s)x + sy$. For any $a \in C^1(\Omega)$, we can see that   
\begin{align*}
\left| \int_\Omega a \ddr [E_N(\rho)(t)] - \int_\Omega a \ddr \rho_{k \tau} \right| & = \left| \int_{\Omega \times \Omega} ( a[(1-s)x + s y] - a[x] ) \ddr \bar{\gamma}(x,y) \right| \\
& \leqslant \int_{\Omega \times \Omega} s |\nabla a(x)| |x-y| \ddr \bar{\gamma}(x,y) \\
& \leqslant \sqrt{\int_{\Omega \times \Omega} |\nabla a(x)|^2 \ddr \bar{\gamma}(x,y)} \sqrt{\int_{\Omega \times \Omega} |x-y|^2 \ddr \bar{\gamma}(x,y)} \\
& \leqslant \| \nabla a \|_{L^\infty} W_2( \rho_{k \tau}, \rho_{(k+1) \tau}).
\end{align*}   
Therefore, if we estimate the action of $m_t(E_N \# \bar{Q}^{N, \lambda})$ on a $C^1$ function $a$, we find that 
\begin{align*}
\left| \int_\Omega a \ddr[m_t(E_N \# \bar{Q}^{N, \lambda})] - \int_\Omega a(x) \ddr x \right| &= \left| \int_\Omega a \ddr[m_t(E_N \# \bar{Q}^{N, \lambda})] - \int_\Omega a \ddr[m_{k \tau}(\bar{Q}^{N, \lambda})] \right| \\
& \leqslant \int_{\Gamma_{T^N}} \left| \int_\Omega a \ddr [ E_N(\rho)(t)] - \int_\Omega a \ddr \rho_{k \tau} \right| \ddr \bar{Q}^{N, \lambda}(\rho) \\
& \leqslant \| \nabla a \|_{L^\infty} \int_{\Gamma_{T^N}} W_2(\rho_{k \tau}, \rho_{(k+1) \tau})  \ddr \bar{Q}^{N, \lambda}(\rho) \\
& \leqslant \sqrt{2\tau} \| \nabla a \|_{L^\infty} \sqrt{\int_{\Gamma_{T^N}} \frac{W_2^2(\rho_{k \tau}, \rho_{(k+1) \tau})}{2 \tau}  \ddr \bar{Q}^{N, \lambda}(\rho)}  \\
& \leqslant \sqrt{2 C \tau} \| \nabla a \|_{L^\infty}.
\end{align*}
Taking the limit $N \to + \infty$ (hence $\tau \to 0$), we know that $m_t(E_N \# \bar{Q}^{N, \lambda})$ converges to $m_t(\bar{Q}^\lambda)$, thus we get 
\begin{equation*}
\int_\Omega a \ddr [m_t(\bar{Q}^\lambda)] = \int_\Omega a(x) \ddr x.
\end{equation*}
As $a$ is an arbitrary $C^1$ function, we have the equality $m_t(\bar{Q}^\lambda) = \Leb$ for any $t$, in other words, $\bar{Q}^\lambda \in \Probin(\Gamma)$. As we already know that $\bar{Q}^\lambda \in \Probbc(\Gamma)$, we conclude that $\bar{Q}^\lambda \in \Proba(\Gamma)$ for any $\lambda > 0$. But $\Proba(\Gamma)$ is closed, therefore $\bar{Q} \in \Proba(\Gamma)$.  
\end{proof}

With all the previous work, it is easy to conclude that $\bar{Q}$ is a minimizer of $\A$: we just copy a standard proof of $\Gamma$-convergence. 

\begin{prop}
\label{proposition_optimality_barQ}
Under Assumption \ref{assumption_L1_entropy} or Assumption \ref{assumption_finite_entropy_IF}, $\bar{Q}$ is a solution of the continuous problem \eqref{equation_continuous_problem}. 
\end{prop}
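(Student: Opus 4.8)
The plan is to run the standard $\Gamma$-convergence argument. Since Proposition \ref{proposition_convergence_barQ} already gives $\bar{Q} \in \Proba(\Gamma)$, all that remains is the minimality $\A(\bar{Q}) \leqslant \A(Q)$ for every competitor $Q \in \Proba(\Gamma)$, and for this I would first reduce to competitors with finite total entropy: it suffices to prove $\A(\bar{Q}) \leqslant \A(Q)$ for every $Q \in \Proba^H(\Gamma)$ with $\A(Q) < + \infty$. Indeed, under Assumption \ref{assumption_L1_entropy} there is a minimizer of $\A$ already lying in $\Proba^H(\Gamma)$, so dominating it forces $\bar{Q}$ to be a minimizer; under Assumption \ref{assumption_finite_entropy_IF}, Proposition \ref{proposition_approximation_finite_entropy} approximates any $Q \in \Proba(\Gamma)$ with $\A(Q) < + \infty$ by some $Q' \in \Proba^H(\Gamma)$ with $\A(Q') \leqslant \A(Q) + \varepsilon$, and one lets $\varepsilon \to 0$ (competitors with infinite action being irrelevant, and a competitor with finite action existing by Theorem \ref{theorem_existence_solution_CP}).

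Then I would chase the three limits in the order $q \to + \infty$, $N \to + \infty$, $\lambda \to 0$. Fix $Q \in \Proba^H(\Gamma)$ with finite action and let $Q_N \in \Probin(\Gamma_{T^N}) \subset \Probbc(\Gamma_{T^N})$ be the discrete competitors produced by Proposition \ref{proposition_gamma_limsup}. Because $Q_N$ is incompressible its discrete congestion term vanishes, so $\A^{N,q,\lambda}(Q_N)$ does not depend on $q$; by optimality of $\bar{Q}^{N,q,\lambda}$ we get $\A^{N,q,\lambda}(\bar{Q}^{N,q,\lambda}) \leqslant \A^{N,q,\lambda}(Q_N)$ for every $q$. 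Discarding on the left the nonnegative congestion and entropy terms, using the continuity of $W_2^2$ along the weak convergence $\bar{Q}^{N,q,\lambda} \to \bar{Q}^{N,\lambda}$, and the identity \eqref{equation_identity_A_discrete_continuous} which rewrites the discrete action of $\bar{Q}^{N,\lambda}$ as $\A(E_N \# \bar{Q}^{N,\lambda})$ (as already used in the proof of Proposition \ref{proposition_convergence_barQ}), one reaches
\begin{equation*}
\A(E_N \# \bar{Q}^{N,\lambda}) \leqslant \sum_{k=1}^N \int_{\Gamma_{T^N}} \frac{W_2^2(\rho_{(k-1)\tau},\rho_{k\tau})}{2\tau} \ddr Q_N(\rho) + \lambda \sum_{k=1}^{N-1} \tau H_{Q_N}(k\tau).
\end{equation*}

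Sending $N \to + \infty$, the lower semicontinuity of $\A$ (Proposition \ref{proposition_lsc_compact_A}) together with the convergence $E_N \# \bar{Q}^{N,\lambda} \to \bar{Q}^\lambda$ bounds the left-hand side from below by $\A(\bar{Q}^\lambda)$, while the two asymptotic estimates established inside the proof of Proposition \ref{proposition_gamma_limsup} (the discretized action tends to at most $\A(Q)$, the discretized entropy stays $\leqslant \int_0^1 H_Q(t) \ddr t$) bound the right-hand side from above by $\A(Q) + \lambda \int_0^1 H_Q(t) \ddr t$. Hence $\A(\bar{Q}^\lambda) \leqslant \A(Q) + \lambda \int_0^1 H_Q(t) \ddr t$, and letting $\lambda \to 0$ — using once more the lower semicontinuity of $\A$ with $\bar{Q}^\lambda \to \bar{Q}$ and the finiteness of $\int_0^1 H_Q(t) \ddr t$ — yields $\A(\bar{Q}) \leqslant \A(Q)$, which concludes the proof.

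The argument is essentially bookkeeping; the only delicate point is the ordering of the limits and the discipline of discarding only nonnegative contributions (the congestion term when passing $q \to + \infty$, the entropic penalization at the very end), which is precisely why the reduction to finite-total-entropy competitors has to be done up front so that the leftover $\lambda \int_0^1 H_Q(t) \ddr t$ genuinely vanishes in the last limit. The structural facts doing the actual work are the lower semicontinuity and compactness of the sublevel sets of $\A$, the exact identity \eqref{equation_identity_A_discrete_continuous} between discrete and continuous action, and the recovery construction of Proposition \ref{proposition_gamma_limsup}.
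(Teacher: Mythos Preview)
Your proof is correct and follows essentially the same $\Gamma$-convergence route as the paper's, the only cosmetic difference being that the paper argues by contradiction while you proceed directly. One small slip: the inclusion $\Probin(\Gamma_{T^N}) \subset \Probbc(\Gamma_{T^N})$ you write does not hold in general; what you actually need (and use) is that $Q_N \in \Probbc(\Gamma_{T^N})$, which is established inside the proof of Proposition~\ref{proposition_gamma_limsup} (the sampling operator leaves the endpoint values unchanged), and separately that $Q_N \in \Probin(\Gamma_{T^N})$ so that the congestion term vanishes.
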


\begin{proof}
We have already seen that $\dst{\A(E_N \# \bar{Q}^{N, \lambda}) \leqslant \liminf_{q \to + \infty} \A^{N,q,\lambda}(\bar{Q}^{N, q, \lambda})}$. By lower semi-continuity of $\A$, we deduce that 
\begin{equation*}
\A(\bar{Q}) \leqslant \liminf_{\lambda \to 0} \left( \liminf_{N \to + \infty} \left( \liminf_{q \to + \infty} \A^{N, q, \lambda}(\bar{Q}^{N, q, \lambda}) \right) \right).
\end{equation*} 
By contradiction, let us assume that there exists $Q \in \Proba(\Gamma)$ such that $\A(Q) < \A(\bar{Q})$. If we are under Assumption \ref{assumption_finite_entropy_IF}, we can regularize it thanks to Proposition \ref{proposition_approximation_finite_entropy}, and under Assumption \ref{assumption_L1_entropy} we know that we can assume that $Q' \in \Proba^H(\Gamma)$ and $\A(Q') \leqslant \A(Q)$. In any of these two cases, we can assume that there exists $Q \in \Proba^H(\Gamma)$ such that $\A(Q) < \A(\bar{Q})$. Thanks to Proposition \ref{proposition_gamma_limsup}, we know that we can construct a sequence $Q_N$ with 
\begin{equation*}
\limsup_{N \to + \infty} \A^{N, q, \lambda}(Q_N) \leqslant \A(Q) + \lambda \int_0^1 H_Q(t) \ddr t
\end{equation*}
Taking the limit $\lambda \to 0$ and using $\A(Q) < \A(\bar{Q})$, we get 
\begin{equation*}
\limsup_{\lambda \to 0} \left( \limsup_{N \to + \infty} \A^{N, q, \lambda}(Q_N) \right) < \liminf_{\lambda \to 0} \left( \liminf_{N \to + \infty} \left( \liminf_{q \to + \infty} \A^{N, q, \lambda}(\bar{Q}^{N, q, \lambda}) \right) \right).
\end{equation*}
Taking $N$ and $q$ large enough and $\lambda$ small enough, one has $\A^{N, q, \lambda}(Q_N) < \A^{N, q, \lambda}(\bar{Q}^{N, q, \lambda})$, which contradicts the optimality of $\bar{Q}^{N, q, \lambda}$. 
\end{proof}

\subsection{Behavior of the averaged entropy of \texorpdfstring{$\bar{Q}$}{Qbar}}

Now, we will show that $H_{\bar{Q}} \in L^1(\intervalleff{0}{1})$ and that $\bar{Q}$ is the minimizer of $\A$ with minimal total entropy. If $Q \in \Prob(\Gamma_{T^N})$, let us denote by $H^\text{int}_Q : \intervalleff{0}{1} \to \intervalleff{0}{+ \infty}$ the piecewise affine interpolation of $H_Q$. More precisely, if $k \in \{ 0, 1, \ldots, N-1 \}$ and $s \in \intervalleff{0}{1}$, we define 
\begin{equation*}
H^\text{int}_Q \left( (k+s) \tau \right) := (1-s) H_Q \left( k \tau \right) + s H_Q \left( (k+1) \tau \right). 
\end{equation*}
We show the following estimate, which relies on the lower semi-continuity of the entropy: 

\begin{prop}
\label{proposition_upper_bound_continuous_entropy}
For any $t \in \intervalleff{0}{1}$, we have the following upper bound for $H_{\bar{Q}}(t)$: 
\begin{equation*}
H_{\bar{Q}}(t) \leqslant \liminf_{\lambda \to 0} \left( \liminf_{N \to + \infty} \left( \liminf_{q \to + \infty} H^\text{int}_{\bar{Q}^{N, q, \lambda}}(t) \right) \right). 
\end{equation*}
\end{prop}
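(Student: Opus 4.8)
The plan is to unwind the triple $\liminf$ from the inside out, following the three convergences established in Proposition~\ref{proposition_convergence_barQ}: first $\bar{Q}^{N,q,\lambda}\to\bar{Q}^{N,\lambda}$ as $q\to+\infty$, then $E_N\#\bar{Q}^{N,\lambda}\to\bar{Q}^\lambda$ as $N\to+\infty$, then $\bar{Q}^\lambda\to\bar{Q}$ as $\lambda\to 0$. The two ingredients are: (i) for each fixed instant $t$, the map $Q\mapsto H_Q(t)$ is lower semicontinuous, which follows from Lemma~\ref{lemma_lsc_compact_averaged} applied to the function $\rho\mapsto\Hen(\rho_t)$ (lower semicontinuous since $e_t$ is continuous and $\Hen$ is lower semicontinuous); and (ii) the displacement convexity of $\Hen$ along $W_2$-geodesics (Proposition~\ref{proposition_proprties_H_Cp}), used to compare the piecewise affine interpolation $H^\text{int}$ with the entropy of the geodesically extended traffic plan.

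First I would fix $t\in\intervalleff{0}{1}$ and write $t=(k+s)\tau$ with $k\in\{0,1,\ldots,N-1\}$ and $s\in\intervalleff{0}{1}$. Applying (i) at the instants $k\tau$ and $(k+1)\tau$ to the convergence $\bar{Q}^{N,q,\lambda}\to\bar{Q}^{N,\lambda}$ yields $H_{\bar{Q}^{N,\lambda}}(l\tau)\leqslant\liminf_{q\to+\infty}H_{\bar{Q}^{N,q,\lambda}}(l\tau)$ for $l\in\{k,k+1\}$; taking the convex combination with weights $1-s$ and $s$ and using that $\liminf$ is superadditive on sequences valued in $\intervalleff{0}{+\infty}$, one gets $H^\text{int}_{\bar{Q}^{N,\lambda}}(t)\leqslant\liminf_{q\to+\infty}H^\text{int}_{\bar{Q}^{N,q,\lambda}}(t)$. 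Next I would compare the discrete traffic plan with its extension, by establishing $H_{E_N\#\bar{Q}^{N,\lambda}}(t)\leqslant H^\text{int}_{\bar{Q}^{N,\lambda}}(t)$: this is trivial if the right-hand side is infinite, and otherwise $\bar{Q}^{N,\lambda}$-a.e.\ $\rho_{k\tau}$ (and $\rho_{(k+1)\tau}$) is absolutely continuous, so the geodesic used to define $E_N(\rho)$ on $\intervalleff{k\tau}{(k+1)\tau}$ is the unique constant-speed geodesic joining $\rho_{k\tau}$ to $\rho_{(k+1)\tau}$, and $\Hen$ is convex along it; hence $\Hen\big((E_N\rho)(t)\big)\leqslant(1-s)\Hen(\rho_{k\tau})+s\,\Hen(\rho_{(k+1)\tau})$, and integrating against $\bar{Q}^{N,\lambda}$ gives the inequality. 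Since $E_N\#\bar{Q}^{N,\lambda}\to\bar{Q}^\lambda$ in $\Prob(\Gamma)$, a further application of (i) gives $H_{\bar{Q}^\lambda}(t)\leqslant\liminf_{N\to+\infty}H_{E_N\#\bar{Q}^{N,\lambda}}(t)\leqslant\liminf_{N\to+\infty}H^\text{int}_{\bar{Q}^{N,\lambda}}(t)$. Finally, from $\bar{Q}^\lambda\to\bar{Q}$ and (i) once more, $H_{\bar{Q}}(t)\leqslant\liminf_{\lambda\to 0}H_{\bar{Q}^\lambda}(t)$; chaining the three inequalities gives the claimed bound.

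The main obstacle is the comparison $H_{E_N\#\bar{Q}^{N,\lambda}}(t)\leqslant H^\text{int}_{\bar{Q}^{N,\lambda}}(t)$ in the second step, i.e.\ making sure $\Hen$ is convex along the \emph{particular} geodesic selected by the extension operator $E_N$. This is exactly where the convexity of $\Omega$ enters, through the displacement convexity of $\Hen$, combined with the uniqueness of the $W_2$-geodesic whenever one of its endpoints is absolutely continuous — and this absolute continuity at the interior discrete times is available because $\bar{Q}^{N,\lambda}$ has finite averaged entropy there, a consequence of the entropic penalization in $\A^{N,q,\lambda}$ together with the uniform bound of Corollary~\ref{corollary_uniform_bound_discrete}. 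All the remaining steps are routine lower-semicontinuity bookkeeping propagated through the triple limit.
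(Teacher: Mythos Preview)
Your proof is correct and follows essentially the same route as the paper: lower semicontinuity of $Q\mapsto H_Q(t)$ for the limits $q\to+\infty$, $N\to+\infty$, $\lambda\to 0$, combined with the geodesic convexity of $\Hen$ to pass from $H_{E_N\#\bar{Q}^{N,\lambda}}(t)$ to the affine interpolant $H^{\mathrm{int}}_{\bar{Q}^{N,\lambda}}(t)$. The only cosmetic difference is that you insert the intermediate quantity $H^{\mathrm{int}}_{\bar{Q}^{N,\lambda}}(t)$ and use superadditivity of $\liminf$ explicitly, whereas the paper writes the convex combination of $H_{\bar{Q}^{N,\lambda}}(k\tau)$ and $H_{\bar{Q}^{N,\lambda}}((k+1)\tau)$ directly and then passes to the $\liminf$ in one line; your extra care about the well-definedness of $E_N$ and the displacement convexity along the \emph{selected} geodesic is exactly the content of the paper's footnote on $E_N$.
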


\begin{proof}
For a fixed $t$, $Q \mapsto H_Q(t)$ is l.s.c. (Lemma \ref{lemma_lsc_compact_averaged}). Thus, for any $k \in \{ 0, 1, 2, \ldots, N \}$, we have
\begin{equation*}
H_{\bar{Q}^{N, \lambda}} \left( k \tau \right) \leqslant \liminf_{q \to + \infty} H_{\bar{Q}^{N, q, \lambda}} \left( k \tau \right). 
\end{equation*}
Then, to pass to the limit $N \to + \infty$, we will use the fact that the entropy is geodesically convex, i.e. convex along the constant-speed geodesics. Recall that $E_N : \Gamma_{T^N} \to \Gamma$ is the extension operator that interpolates along constant-speed geodesics. Let us take $\rho \in \Gamma_{T^N}$. By geodesic convexity, we have for any $k \in \{ 0, 1, \ldots, N-1 \}$ and $s \in \intervalleff{0}{1}$
\begin{equation*}
\Hen \left[ E_N(\rho)\left( (k+s) \tau \right) \right] \leqslant (1-s) \Hen(\rho_{k \tau}) + s \Hen(\rho_{(k+1) \tau}).
\end{equation*} 
Integrating this inequality over $\Gamma_{T^N}$ w.r.t. $\bar{Q}^{N, \lambda}$, we get 
\begin{align*}
H_{E_N \# \bar{Q}^{N, \lambda}} \left( (k+s) \tau \right) & \leqslant (1-s) H_{\bar{Q}^{N, \lambda}}\left( k \tau \right) + s H_{\bar{Q}^{N, \lambda}}\left( (k+1) \tau \right) \\
& \leqslant \liminf_{q \to + \infty} \left[ (1-s) H_{\bar{Q}^{N, q, \lambda}}\left( k \tau \right) + s H_{\bar{Q}^{N,q, \lambda}}\left( (k+1) \tau \right) \right] \\
& = \liminf_{q \to + \infty} \left[ H^\text{int}_{\bar{Q}^{N,q, \lambda}} \left( (k+s) \tau \right) \right]   
\end{align*}
We take the limit $N \to + \infty$, followed by $\lambda \to 0$ to get (thanks to the lower semi-continuity of the averaged entropy) the announced inequality. 
\end{proof}

We derive a useful consequence, which implies Theorem \ref{theorem_finite_entropy_IF_implies_L1}.

\begin{crl}
\label{corollary_finite_entropy_IF_implies_bounded}
Under Assumption \ref{assumption_finite_entropy_IF}, the function $H_{\bar{Q}}$ is bounded by $\max(H_\gamma(0), H_\gamma(1))$. 
\end{crl}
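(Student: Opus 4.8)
The plan is to combine the uniform upper bound of Proposition~\ref{proposition_upper_bound_continuous_entropy} with the discrete convexity of Theorem~\ref{theorem_convexity_entropy_discrete} and with the explicit $L^\infty$ bound on the entropy produced by the regularization procedure of Proposition~\ref{proposition_approximation_finite_entropy}. More precisely, Proposition~\ref{proposition_upper_bound_continuous_entropy} reduces the problem to bounding $H^{\mathrm{int}}_{\bar{Q}^{N,q,\lambda}}(t)$, and since $H^{\mathrm{int}}$ is a piecewise affine interpolation, it suffices to bound $H_{\bar{Q}^{N,q,\lambda}}(k\tau)$ for each $k\in\{0,1,\dots,N\}$. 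Theorem~\ref{theorem_convexity_entropy_discrete} tells us that $k\mapsto H_{\bar{Q}^{N,q,\lambda}}(k\tau)$ is a \emph{convex} function, hence it attains its maximum at one of the two endpoints $k=0$ or $k=N$; since $\bar{Q}^{N,q,\lambda}\in\Probbc(\Gamma_{T^N})$ we have $H_{\bar{Q}^{N,q,\lambda}}(0)=H_\gamma(0)$ and $H_{\bar{Q}^{N,q,\lambda}}(N)=H_\gamma(1)$, both finite by Assumption~\ref{assumption_finite_entropy_IF}. Consequently
\begin{equation*}
H^{\mathrm{int}}_{\bar{Q}^{N,q,\lambda}}(t)\leqslant \max\bigl(H_\gamma(0),H_\gamma(1)\bigr)
\end{equation*}
for every $t\in\intervalleff{0}{1}$, uniformly in $N$, $q$ and $\lambda$.

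First I would observe that the solution $\bar{Q}^{N,q,\lambda}$ of the discrete problem has finite discrete action $\A^{N,q,\lambda}(\bar{Q}^{N,q,\lambda})\leqslant C$ by Corollary~\ref{corollary_uniform_bound_discrete} (which requires exactly Assumption~\ref{assumption_finite_entropy_IF}, via Proposition~\ref{proposition_approximation_finite_entropy}); in particular the hypotheses of Theorem~\ref{theorem_convexity_entropy_discrete} are met, so the discrete convexity inequality~\eqref{equation_convexity_entropy_discrete} holds. Next I would note that convexity of a sequence on $\{0,1,\dots,N\}$ forces every interior value to be dominated by the larger of the two endpoint values, hence by $\max(H_\gamma(0),H_\gamma(1))$; a one-line induction, or simply the maximum principle for discrete convex functions, suffices. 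The piecewise affine interpolate inherits the same bound since a convex combination of two numbers bounded by $M$ is bounded by $M$.

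The last step is to pass to the limit: plugging the uniform bound into Proposition~\ref{proposition_upper_bound_continuous_entropy} gives
\begin{equation*}
H_{\bar{Q}}(t)\leqslant \liminf_{\lambda\to 0}\Bigl(\liminf_{N\to+\infty}\Bigl(\liminf_{q\to+\infty} H^{\mathrm{int}}_{\bar{Q}^{N,q,\lambda}}(t)\Bigr)\Bigr)\leqslant\max\bigl(H_\gamma(0),H_\gamma(1)\bigr),
\end{equation*}
which is precisely the claim. There is essentially no obstacle here: the proof is a short assembly of results already established, and the only point requiring a sentence of care is verifying that the discrete endpoint entropies are literally $H_\gamma(0)$ and $H_\gamma(1)$ — this is immediate from $(e_0,e_1)\#\bar{Q}^{N,q,\lambda}=\gamma$ together with the definition of $H_Q$ at $t=0$ and $t=1$. (I would also remark in passing that combined with Proposition~\ref{proposition_optimality_barQ}, which shows $\bar{Q}$ solves \eqref{equation_continuous_problem}, this corollary is exactly Theorem~\ref{theorem_finite_entropy_IF_implies_L1}, and that finiteness of $H_{\bar{Q}}$ on $\intervalleff{0}{1}$ in particular gives $\int_0^1 H_{\bar{Q}}<+\infty$, so Assumption~\ref{assumption_finite_entropy_IF} indeed implies Assumption~\ref{assumption_L1_entropy}.)
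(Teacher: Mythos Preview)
Your proposal is correct and follows essentially the same approach as the paper: use the discrete convexity of Theorem~\ref{theorem_convexity_entropy_discrete} to bound $H_{\bar{Q}^{N,q,\lambda}}(k\tau)$ by the endpoint values $H_\gamma(0)$ and $H_\gamma(1)$, pass this bound to the piecewise affine interpolate, and then apply Proposition~\ref{proposition_upper_bound_continuous_entropy}. The only difference is that your write-up is more explicit about the intermediate steps (the maximum principle for discrete convex sequences, the identification of the endpoint values via $(e_0,e_1)\#\bar{Q}^{N,q,\lambda}=\gamma$), whereas the paper condenses all of this into three sentences.
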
  

\begin{proof}
This is where we use the work of Section \ref{section_discrete problem}: thanks to Theorem \ref{theorem_convexity_entropy_discrete}, we know that $H_{\bar{Q}^{N, q, \lambda}}$ is convex and therefore bounded by the values at its endpoints which happen to be finite (independently of $N, q$ or $\lambda$): 
\begin{equation*}
\forall k \in \{ 0, 1, 2, \ldots, N \}, \ H_{\bar{Q}^{N, q, \lambda}} \left( k \tau \right) \leqslant \max(H_\gamma(0), H_\gamma(1)).  
\end{equation*} 
Thus the function $H^\text{int}_{\bar{Q}^{N, q, \lambda}}$ is also bounded uniformly on $\intervalleff{0}{1}$ by $\max(H_\gamma(0), H_\gamma(1))$. Proposition \ref{proposition_upper_bound_continuous_entropy} allows us to conclude that the same bound holds for $H_{\bar{Q}}$. 
\end{proof}

As we have now proved Theorem \ref{theorem_finite_entropy_IF_implies_L1}, we will work only under Assumption \ref{assumption_L1_entropy}. It remains to show that the $\bar{Q}$ we constructed is the one with minimal total entropy. This is done thanks to the entropic penalization, and is standard in $\Gamma$-convergence theory, the specific structure of the Wasserstein space does not play any role. 

\begin{prop}
\label{proposition_barQ_lowest_entropy}
For any $Q \in \Proba^H(\Gamma)$ solution of the continuous problem \eqref{equation_continuous_problem}, we have 
\begin{equation*}
\int_0^1 H_{\bar{Q}}(t) \ddr t \leqslant \int_0^1 H_Q(t) \ddr t
\end{equation*}
\end{prop}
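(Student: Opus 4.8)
The plan is to compare the total entropy of $\bar Q$ with that of the given competitor $Q$ by carefully chasing the entropic penalization $\lambda \sum_{k=1}^{N-1} \tau H_{\bar Q^{N,q,\lambda}}(k\tau)$ through the three successive limits, exploiting the optimality of $\bar Q^{N,q,\lambda}$ at each stage. First I would fix $Q \in \Proba^H(\Gamma)$ a solution of \eqref{equation_continuous_problem} and apply Proposition \ref{proposition_gamma_limsup} to obtain, for each $N$, a discrete competitor $Q_N \in \Probin(\Gamma_{T^N})$ with
\begin{equation*}
\limsup_{N \to + \infty} \A^{N,q,\lambda}(Q_N) \leqslant \A(Q) + \lambda \int_0^1 H_Q(t) \ddr t.
\end{equation*}
By optimality of $\bar Q^{N,q,\lambda}$ in \eqref{equation_discrete_problem} we have $\A^{N,q,\lambda}(\bar Q^{N,q,\lambda}) \leqslant \A^{N,q,\lambda}(Q_N)$ for every $N$, $q$, $\lambda$. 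Since all three terms of $\A^{N,q,\lambda}$ are nonnegative, in particular the discretized action plus the congestion term dominate nothing but themselves, we can isolate: writing $\A^{N,q,\lambda}(\bar Q^{N,q,\lambda})$ as (discrete action) $+$ (congestion) $+\ \lambda \sum_{k=1}^{N-1}\tau H_{\bar Q^{N,q,\lambda}}(k\tau)$ and using that $\A(\bar Q) = \A(Q)$ (both are solutions of \eqref{equation_continuous_problem}, by Proposition \ref{proposition_optimality_barQ}), a lower bound for the first two terms combined with the upper bound above on the whole of $\A^{N,q,\lambda}(Q_N)$ will squeeze the entropic term.

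The key quantitative step is the following. From Proposition \ref{proposition_convergence_barQ} and \eqref{equation_identity_A_discrete_continuous}, the discrete action of $\bar Q^{N,q,\lambda}$ converges (along the chosen subsequences) to something bounded below by $\A(\bar Q)$: more precisely $\A(E_N \# \bar Q^{N,\lambda}) = \sum_{k=1}^N \int \frac{W_2^2(\rho_{(k-1)\tau},\rho_{k\tau})}{2\tau}\ddr\bar Q^{N,\lambda}$, and $\liminf$ as $q\to\infty$ of the discrete action of $\bar Q^{N,q,\lambda}$ is at least that of $\bar Q^{N,\lambda}$ by continuity of $W_2$, then lower semicontinuity of $\A$ across $N\to\infty$ and $\lambda\to 0$ gives $\A(\bar Q)\leqslant \liminf \text{(discrete action of } \bar Q^{N,q,\lambda})$; the congestion term is $\geqslant 0$ always. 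Therefore
\begin{equation*}
\A(\bar Q) + \liminf_{\lambda \to 0}\left(\liminf_{N\to+\infty}\left(\liminf_{q\to+\infty}\lambda \sum_{k=1}^{N-1}\tau H_{\bar Q^{N,q,\lambda}}(k\tau)\right)\right) \leqslant \A(Q) + \lim_{\lambda\to0}\lambda\int_0^1 H_Q(t)\ddr t = \A(Q).
\end{equation*}
Since $\A(\bar Q) = \A(Q)$, we deduce $\liminf_\lambda \liminf_N \liminf_q \lambda \sum_{k=1}^{N-1}\tau H_{\bar Q^{N,q,\lambda}}(k\tau) \leqslant 0$; as each term is nonnegative this liminf is $0$, but more usefully, for the comparison we actually need the stronger bookkeeping: divide the displayed inequality's entropic remainder by $\lambda$ before passing to the limit.

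The cleanest route is: for fixed $\lambda>0$, from $\A^{N,q,\lambda}(\bar Q^{N,q,\lambda}) \leqslant \A^{N,q,\lambda}(Q_N)$ and the Proposition \ref{proposition_gamma_limsup} bound, drop the nonnegative congestion and action terms on the left, keep only $\lambda \sum_{k=1}^{N-1}\tau H_{\bar Q^{N,q,\lambda}}(k\tau)$ on the left but keep the \emph{full} discrete action of $\bar Q^{N,q,\lambda}$ as well; rearrange to
\begin{equation*}
\sum_{k=1}^{N-1}\tau H_{\bar Q^{N,q,\lambda}}(k\tau) \leqslant \frac{1}{\lambda}\left(\A(Q) + \lambda\int_0^1 H_Q(t)\ddr t - \A(E_N\#\bar Q^{N,\lambda}) + o_q(1)\right),
\end{equation*}
then take $\liminf_{q\to\infty}$ (the sum $\sum \tau H^{\mathrm{int}}_{\bar Q^{N,q,\lambda}}$ is a Riemann-type sum whose liminf controls $\int_0^1 H^{\mathrm{int}}$), then $\liminf_{N\to\infty}$ using $\A(\bar Q^\lambda)\leqslant \liminf_N \A(E_N\#\bar Q^{N,\lambda})$ and Proposition \ref{proposition_upper_bound_continuous_entropy} to pass $\int_0^1 H^{\mathrm{int}}_{\bar Q^{N,q,\lambda}}$ to a lower bound $\int_0^1 H_{\bar Q^\lambda}$ via Fatou, obtaining
\begin{equation*}
\int_0^1 H_{\bar Q^\lambda}(t)\ddr t \leqslant \int_0^1 H_Q(t)\ddr t + \frac{1}{\lambda}\big(\A(Q) - \A(\bar Q^\lambda)\big).
\end{equation*}
Finally, since $\A(\bar Q^\lambda)\geqslant \min\A = \A(Q)$ by Proposition \ref{proposition_optimality_barQ} (as $\bar Q^\lambda \in \Proba(\Gamma)$), the error term $\frac{1}{\lambda}(\A(Q)-\A(\bar Q^\lambda))$ is $\leqslant 0$, so $\int_0^1 H_{\bar Q^\lambda} \leqslant \int_0^1 H_Q$ uniformly in $\lambda$, and passing $\lambda\to 0$ with Fatou (lower semicontinuity of $H$ and Proposition \ref{proposition_upper_bound_continuous_entropy}, or directly $H_{\bar Q}(t) \leqslant \liminf_\lambda H_{\bar Q^\lambda}(t)$) yields $\int_0^1 H_{\bar Q}(t)\ddr t \leqslant \int_0^1 H_Q(t)\ddr t$.

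\textbf{Main obstacle.} The delicate point is the interchange of the $\liminf$ in $q$, $N$, $\lambda$ with the integrals: one must ensure at each stage that a reverse-Fatou or Fatou lemma applies and that the $o_q(1)$ remainders from replacing $\bar Q^{N,q,\lambda}$ by $\bar Q^{N,\lambda}$ genuinely vanish after dividing by the \emph{fixed} $\lambda$ (this is why $\lambda$ must be sent to zero last). Tracking that the discrete action of $\bar Q^{N,q,\lambda}$, which appears with a minus sign, is correctly bounded below by $\A(\bar Q^\lambda)$ through two nested liminfs — rather than being uncontrolled — is the crux; it relies precisely on the compactness and lower semicontinuity of $\A$ established in Proposition \ref{proposition_lsc_compact_A} together with the identity \eqref{equation_identity_A_discrete_continuous}. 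Everything else is routine $\Gamma$-convergence bookkeeping.
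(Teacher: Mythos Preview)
Your argument is correct and uses the same ingredients as the paper: the optimality of $\bar Q^{N,q,\lambda}$, the discrete competitor $Q_N$ from Proposition~\ref{proposition_gamma_limsup}, lower semicontinuity of $\A$, and the entropy comparison via Proposition~\ref{proposition_upper_bound_continuous_entropy}. The paper organizes these into a contradiction argument (establishing \eqref{equation_encadrement_liminf3} by assuming the triple $\liminf$ of the discrete entropy sums exceeds $\int_0^1 H_Q$ and deriving a violation of optimality), whereas you rearrange the optimality inequality directly and obtain the intermediate bound $\int_0^1 H_{\bar Q^\lambda}\leqslant \int_0^1 H_Q$ at each fixed $\lambda$, using the key observation $\A(\bar Q^\lambda)\geqslant \A(Q)$ to kill the action remainder before sending $\lambda\to 0$.

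Two cosmetic points. First, your displayed inequality with $o_q(1)$ is written prematurely with $\A(Q)$ and $\int_0^1 H_Q$ on the right: at finite $N$ these should still be the discrete action of $Q_N$ and $\sum_{k=1}^{N-1}\tau H_{Q_N}(k\tau)$; the replacements only become valid after $\limsup_N$ (and you need $\limsup(a_N-b_N+c_N)\leqslant \limsup a_N-\liminf b_N+\limsup c_N$ to control the negative action term). Second, Proposition~\ref{proposition_upper_bound_continuous_entropy} is stated for $\bar Q$, not $\bar Q^\lambda$; what you actually use is the intermediate step of its proof, namely $H_{\bar Q^\lambda}(t)\leqslant \liminf_N\liminf_q H^{\mathrm{int}}_{\bar Q^{N,q,\lambda}}(t)$, combined with Fatou and the quadrature bound $\int_\tau^{1-\tau}H^{\mathrm{int}}\leqslant \sum_{k=1}^{N-1}\tau H(k\tau)$ (which avoids the possibly infinite endpoint values under Assumption~\ref{assumption_L1_entropy}). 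Neither point is a gap; both are purely presentational.
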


\begin{proof}
Let us start with an exact quadrature formula for $H^\text{int}_{\bar{Q}^{N, q, \lambda}}$ :   
\begin{equation*}
\int_{\tau}^{1 - \tau} H^\text{int}_{\bar{Q}^{N, q, \lambda}}(t) \ddr t 
=  \frac{\tau}{2} H_{\bar{Q}^{N, q, \lambda}} \left( \tau \right) + \tau \sum_{k=2}^{N-2} H_{\bar{Q}^{N, q, \lambda}} \left( k \tau \right) + \frac{\tau}{2} H_{\bar{Q}^{N, q, \lambda}} \left( 1 - \tau \right)
\leqslant \tau \sum_{k=1}^{N-1} H_{\bar{Q}^{N, q, \lambda}} \left( k \tau \right) 
\end{equation*}
Then we take successively the limits $q \to + \infty$, $N \to + \infty$ and $\lambda \to 0$, applying Fatou's lemma and using Proposition \ref{proposition_upper_bound_continuous_entropy} to get 
\begin{equation}
\label{equation_bound_HQbar_liminf3}
\int_0^1 H_{\bar{Q}}(t) \ddr t \leqslant \liminf_{\lambda \to 0} \left( \liminf_{N \to + \infty} \left( \liminf_{q \to + \infty} \left( \tau \sum_{k=1}^{N-1} H_{\bar{Q}^{N, q, \lambda}} \left[ k \tau \right] \right) \right) \right). 
\end{equation}
On the other hand, let us show that the r.h.s. of \eqref{equation_bound_HQbar_liminf3} is smaller than the total entropy of any minimizer of \eqref{equation_continuous_problem}. Indeed, assume that this is not the case for some $Q \in \Proba(\Gamma)$ solution of \eqref{equation_continuous_problem}. In particular, for some $\lambda > 0$ small enough, we have the strict inequality 
\begin{equation*}
\int_0^1 H_Q(t) \ddr t <  \liminf_{N \to + \infty} \left( \liminf_{q \to + \infty} \left( \tau \sum_{k=1}^{N-1} H_{\bar{Q}^{N, q, \lambda}} \left[ k \tau \right] \right) \right). 
\end{equation*}
Using the fact that $\A(Q) \leqslant \A(\bar{Q}^\lambda)$ by optimality of $Q$, and thanks to the lower semi-continuity of the action, 
\begin{equation*}
\A(Q) \leqslant \A(\bar{Q}^\lambda) \leqslant \liminf_{N \to + \infty} \left( \liminf_{q \to + \infty} \left( \sum_{k=1}^N \int_{\Gamma_{T^N}} \frac{W_2^2(\rho_{(k-1) \tau}, \rho_{k \tau})}{2 \tau} \ddr \bar{Q}^{N, q, \lambda}(\rho) \right) \right). 
\end{equation*}
Therefore, gluing these two estimates together, we obtain 
\begin{equation*}
\A(Q) + \lambda \int_0^1 H_Q(t) \ddr t < \liminf_{N \to + \infty} \left( \liminf_{q \to + \infty} \left( \sum_{k=1}^N \int_{\Gamma_{T^N}} \frac{W_2^2(\rho_{(k-1) \tau}, \rho_{k \tau})}{2 \tau} \ddr \bar{Q}^{N, q, \lambda}(\rho) +   \lambda \sum_{k=1}^{N-1} \tau H_{\bar{Q}^{N, q, \lambda}} [ k \tau ] \right) \right). 
\end{equation*}
But if we build the $Q_N$ from $Q$ as in Proposition \ref{proposition_gamma_limsup}, we get, for $N$ and $q$ large enough, 
\begin{equation*}
\A^{N, q, \lambda}(Q_N) <  \sum_{k=1}^N \int_{\Gamma_{T^N}} \frac{W_2^2(\rho_{(k-1) \tau}, \rho_{k \tau})}{2 \tau} \ddr \bar{Q}^{N, q, \lambda}(\rho) +   \lambda \sum_{k=1}^{N-1} \tau H_{\bar{Q}^{N, q, \lambda}} ( k \tau ) \leqslant \A^{N, q, \lambda}(\bar{Q}^{N, q, \lambda}),
\end{equation*} 
which is a contradiction with the optimality of $\bar{Q}^{N, q, \lambda}$. Hence, we have proved that for any $Q \in \Proba(\Gamma)$ solution of the continuous problem, 
\begin{equation}
\label{equation_encadrement_liminf3}
\int_0^1 H_{\bar{Q}}(t) \ddr t \leqslant \liminf_{\lambda \to 0} \left( \liminf_{N \to + \infty} \left( \liminf_{q \to + \infty} \left( \tau \sum_{k=1}^{N-1} H_{\bar{Q}^{N, q, \lambda}} \left[ k \tau \right] \right) \right) \right) \leqslant \int_0^1 H_{Q}(t) \ddr t. \qedhere
\end{equation}
\end{proof}

Now it remains to show that $H_{\bar{Q}}$ is a convex function of time. This will be done by proving that $H_{\bar{Q}}$ is the limit of $H^\text{int}_{\bar{Q}^{N, q, \lambda}}$.

\begin{prop}
\label{proposition_convexity_ae_entropy}
Under Assumption \ref{assumption_L1_entropy}, for a.e. $t \in \intervalleff{0}{1}$,
\begin{equation*}
H_{\bar{Q}}(t) = \lim_{\lambda \to 0} \left( \lim_{N \to + \infty} \left( \lim_{q \to + \infty} \left( H^\text{int}_{\bar{Q}^{N, q, \lambda}}(t) \right) \right) \right).
\end{equation*}
\end{prop}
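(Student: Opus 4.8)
The plan is to establish the matching lower and upper bounds. The upper bound
\[
H_{\bar{Q}}(t) \leqslant \liminf_{\lambda \to 0} \liminf_{N \to + \infty} \liminf_{q \to + \infty} H^\text{int}_{\bar{Q}^{N, q, \lambda}}(t)
\]
holds for \emph{every} $t$ and is exactly Proposition \ref{proposition_upper_bound_continuous_entropy}, so nothing new is needed there. The real content is the reverse inequality for a.e.\ $t$, namely
\[
\limsup_{\lambda \to 0} \limsup_{N \to + \infty} \limsup_{q \to + \infty} H^\text{int}_{\bar{Q}^{N, q, \lambda}}(t) \leqslant H_{\bar{Q}}(t) \quad \text{for a.e. } t,
\]
after which the two bounds force the iterated limit to exist and equal $H_{\bar{Q}}(t)$ a.e.

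The key observation is that the chain of inequalities in \eqref{equation_encadrement_liminf3}, applied with $Q = \bar{Q}$ itself (which is a solution of \eqref{equation_continuous_problem} by Proposition \ref{proposition_optimality_barQ} and lies in $\Proba^H(\Gamma)$ since $H_{\bar{Q}} \in L^1$ by Proposition \ref{proposition_barQ_lowest_entropy}), gives that the integrals converge, not merely a liminf bound:
\[
\lim_{\lambda \to 0} \lim_{N \to + \infty} \lim_{q \to + \infty} \int_0^1 H^\text{int}_{\bar{Q}^{N, q, \lambda}}(t) \ddr t = \int_0^1 H_{\bar{Q}}(t) \ddr t.
\]
Indeed \eqref{equation_encadrement_liminf3} with $Q=\bar Q$ squeezes the middle quantity (which, up to the negligible boundary half-weights $\frac{\tau}{2}H(\tau)+\frac{\tau}{2}H(1-\tau)$ that vanish by the uniform entropy bounds coming from convexity at the discrete level, Theorem \ref{theorem_convexity_entropy_discrete}, and Corollary \ref{corollary_finite_entropy_IF_implies_bounded}-type arguments, equals $\lim \int_0^1 H^\text{int}_{\bar Q^{N,q,\lambda}}$) between $\int_0^1 H_{\bar Q}$ on both sides. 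So I get $L^1$-convergence of the integrals combined with the pointwise liminf bound of Proposition \ref{proposition_upper_bound_continuous_entropy}; the standard measure-theoretic fact that an a.e.\ liminf inequality together with convergence of integrals upgrades to a.e.\ convergence (a Scheff\'e/Fatou-type argument, using that all functions involved are nonnegative) then yields the claim. One has to be a little careful because we are taking \emph{iterated} limits rather than a single sequence, so I would fix a diagonal sequence $(N_n, q_n, \lambda_n)$ realizing the iterated limits of the integrals, note that along it the functions $H^\text{int}_{\bar{Q}^{N_n, q_n, \lambda_n}}$ are nonnegative with $\int_0^1 H^\text{int} \to \int_0^1 H_{\bar{Q}}$ and $\liminf H^\text{int}(t) \geqslant H_{\bar{Q}}(t)$ a.e.\ (the latter from Proposition \ref{proposition_upper_bound_continuous_entropy}, whose proof via geodesic convexity of $\Hen$ actually produces the pointwise liminf bound), and conclude a.e.\ convergence along that sequence.

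The main obstacle I anticipate is bookkeeping the three nested limits cleanly: Proposition \ref{proposition_upper_bound_continuous_entropy} is phrased with iterated liminfs, whereas the Scheff\'e-type upgrade wants a single sequence, so the argument must pass to a well-chosen subsequence in each of the three parameters simultaneously and verify that the pointwise lower bound survives. A secondary technical point is controlling the difference between $\tau \sum_{k=1}^{N-1} H_{\bar{Q}^{N,q,\lambda}}(k\tau)$ and $\int_0^1 H^\text{int}_{\bar{Q}^{N,q,\lambda}}(t)\ddr t$: these differ only in the endpoint weights and on the first/last subintervals $\intervalleff{0}{\tau}$ and $\intervalleff{1-\tau}{1}$, where $H^\text{int}$ interpolates between the (uniformly bounded, by discrete convexity) boundary values $H_\gamma(0), H_\gamma(1)$ and $H_{\bar{Q}^{N,q,\lambda}}(\tau), H_{\bar{Q}^{N,q,\lambda}}(1-\tau)$, so the discrepancy is $O(\tau)$ and disappears in the limit — I would spell this out but not belabor it. Once the a.e.\ identity $H_{\bar{Q}}(t) = \lim \lim \lim H^\text{int}_{\bar{Q}^{N,q,\lambda}}(t)$ is in hand, convexity of $H_{\bar{Q}}$ (away from a null set, hence after modification on it, everywhere, since $H_{\bar Q}$ is already l.s.c.) follows immediately because each $H^\text{int}_{\bar{Q}^{N,q,\lambda}}$ is the piecewise-affine interpolation of the discretely convex sequence $k \mapsto H_{\bar{Q}^{N,q,\lambda}}(k\tau)$ from Theorem \ref{theorem_convexity_entropy_discrete}, hence convex on $\intervalleff{0}{1}$, and a pointwise a.e.\ limit of convex functions is convex.
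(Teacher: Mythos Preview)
Your approach is essentially the paper's: squeeze \eqref{equation_encadrement_liminf3} with $Q=\bar Q$ to get convergence of the (discrete) total entropies, combine with the pointwise liminf bound from Proposition \ref{proposition_upper_bound_continuous_entropy}, and run a Scheff\'e/Fatou argument to upgrade to a.e.\ convergence along a suitable subsequence. The paper's proof is just a terse version of exactly this.

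There is one genuine slip, though. You claim the discrepancy between $\tau\sum_{k=1}^{N-1} H_{\bar Q^{N,q,\lambda}}(k\tau)$ and $\int_0^1 H^{\mathrm{int}}_{\bar Q^{N,q,\lambda}}$ is $O(\tau)$ because the boundary values $H_\gamma(0), H_\gamma(1)$ are ``uniformly bounded, by discrete convexity'' and invoke Corollary \ref{corollary_finite_entropy_IF_implies_bounded}. But the proposition is stated under Assumption \ref{assumption_L1_entropy} only, where $H_\gamma(0)$ and $H_\gamma(1)$ may well be $+\infty$; discrete convexity bounds the \emph{interior} values by the endpoints, not the other way around, and Corollary \ref{corollary_finite_entropy_IF_implies_bounded} requires Assumption \ref{assumption_finite_entropy_IF}. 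In that situation $H^{\mathrm{int}}_{\bar Q^{N,q,\lambda}}\equiv+\infty$ on $\intervallefo{0}{\tau}\cup\intervalleof{1-\tau}{1}$, so $\int_0^1 H^{\mathrm{int}}_{\bar Q^{N,q,\lambda}}=+\infty$ for every $N$ and your integral convergence fails outright.

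The fix is the one the paper implicitly uses: run the Scheff\'e argument with $g_n := H^{\mathrm{int}}_{\bar Q^{N,q,\lambda}}\cdot\1_{\intervalleff{\tau}{1-\tau}}$ (or equivalently work directly with the discrete sums $\tau\sum_{k=1}^{N-1}$). For any $t\in\intervalleoo{0}{1}$ one has $t\in\intervalleff{\tau}{1-\tau}$ eventually, so the pointwise liminf bound from Proposition \ref{proposition_upper_bound_continuous_entropy} survives for $g_n$; and the trapezoidal identity in the proof of Proposition \ref{proposition_barQ_lowest_entropy} gives $\int_\tau^{1-\tau} H^{\mathrm{int}}\leqslant \tau\sum_{k=1}^{N-1}H(k\tau)$, which together with Fatou sandwiches $\int g_n\to\int_0^1 H_{\bar Q}$. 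From there your $L^1$-convergence and a.e.-along-a-subsequence conclusion go through verbatim.
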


\begin{proof}
Taking $Q = \bar{Q}$ in \eqref{equation_encadrement_liminf3}, we see that, up to extraction, 
\begin{equation*}
\int_0^1 H_{\bar{Q}}(t) \ddr t = \lim_{\lambda \to 0} \left( \lim_{N \to + \infty} \left( \lim_{q \to + \infty} \left( \tau \sum_{k=1}^{N-1} H_{\bar{Q}^{N, q, \lambda}} \left[ k \tau \right] \right) \right) \right). 
\end{equation*}
In other words, the integral over time of the discrete averaged entropy converges to the integral of the continuous one. As we know moreover that the discrete averaged entropy is an upper bound for the continuous one (Proposition \ref{proposition_upper_bound_continuous_entropy}), it is not difficult to show that the discrete averaged entropy converges (up to extraction) a.e. to the continuous one.  
\end{proof}

\subsection{From convexity a.e. to true convexity}

Proposition \ref{proposition_convexity_ae_entropy} is slightly weaker than the result we claimed, as we get information about $H_{\bar{Q}}$ only for a.e. time. The first step toward true convexity is to show that, under Assumption \ref{assumption_finite_entropy_IF}, the averaged entropy is everywhere below the line joining the endpoints. 

\begin{prop}
\label{proposition_true_convexity_bdp}
Under Assumption \ref{assumption_finite_entropy_IF}, for any $t \in \intervalleff{0}{1}$, we have 
\begin{equation*}
H_{\bar{Q}}(t) \leqslant (1-t) H_{\bar{Q}}(0) + t H_{\bar{Q}}(1). 
\end{equation*}
\end{prop}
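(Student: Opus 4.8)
The plan is to combine the almost-everywhere convexity obtained in Proposition \ref{proposition_convexity_ae_entropy} with the uniform bound on $H^\text{int}_{\bar{Q}^{N,q,\lambda}}$ coming from the discrete convexity. Recall from Theorem \ref{theorem_convexity_entropy_discrete} that for every $N$, $q$, $\lambda$ the function $k \mapsto H_{\bar{Q}^{N,q,\lambda}}(k\tau)$ is convex, so its piecewise affine interpolation $H^\text{int}_{\bar{Q}^{N,q,\lambda}}$ is a convex function on $\intervalleff{0}{1}$. Under Assumption \ref{assumption_finite_entropy_IF}, its endpoint values are controlled: since $(e_0,e_1)\#\bar{Q}^{N,q,\lambda} = \gamma$, we have $H_{\bar{Q}^{N,q,\lambda}}(0) = H_\gamma(0)$ and $H_{\bar{Q}^{N,q,\lambda}}(1) = H_\gamma(1)$, both finite and independent of $N$, $q$, $\lambda$. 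Convexity of $H^\text{int}_{\bar{Q}^{N,q,\lambda}}$ then gives, for every $t \in \intervalleff{0}{1}$,
\begin{equation*}
H^\text{int}_{\bar{Q}^{N,q,\lambda}}(t) \leqslant (1-t) H_\gamma(0) + t H_\gamma(1).
\end{equation*}

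Next I would pass this inequality to the limit. First take $q \to +\infty$: by lower semi-continuity of $Q \mapsto H_Q(k\tau)$ (Lemma \ref{lemma_lsc_compact_averaged}) one gets $H_{\bar{Q}^{N,\lambda}}(k\tau) \leqslant \liminf_{q\to+\infty} H_{\bar{Q}^{N,q,\lambda}}(k\tau)$ for each discrete node, hence the same affine upper bound survives for $H^\text{int}_{\bar{Q}^{N,\lambda}}$ at every $t$. Then, as in the proof of Proposition \ref{proposition_upper_bound_continuous_entropy}, use geodesic convexity of $\Hen$ to control $H_{E_N\#\bar{Q}^{N,\lambda}}$ on each subinterval by the affine interpolation of its nodal values, so $H_{E_N\#\bar{Q}^{N,\lambda}}(t) \leqslant (1-t)H_\gamma(0) + t H_\gamma(1)$ for all $t$. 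Finally take $N \to +\infty$ and then $\lambda \to 0$: by lower semi-continuity of the averaged entropy along the convergences of Proposition \ref{proposition_convergence_barQ}, $H_{\bar{Q}}(t) \leqslant \liminf(\cdots) \leqslant (1-t)H_\gamma(0) + t H_\gamma(1)$. It remains only to identify $H_\gamma(0)$ and $H_\gamma(1)$ with $H_{\bar{Q}}(0)$ and $H_{\bar{Q}}(1)$, which is immediate since $(e_0,e_1)\#\bar{Q} = \gamma$ (Proposition \ref{proposition_convergence_barQ}), so $H_{\bar{Q}}(0) = H_\gamma(0)$ and $H_{\bar{Q}}(1) = H_\gamma(1)$. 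This yields exactly the claimed inequality $H_{\bar{Q}}(t) \leqslant (1-t)H_{\bar{Q}}(0) + t H_{\bar{Q}}(1)$.

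I expect the main subtlety to be the bookkeeping in the triple limit: one must be careful that the affine upper bound (whose coefficients do not depend on $N$, $q$, $\lambda$) is genuinely preserved at \emph{every} $t$ rather than only at the discrete nodes, which is precisely why the geodesic convexity of $\Hen$ is invoked to handle the interpolation step $E_N$. No genuinely new idea is needed — the convexity has already been established at the discrete level (Theorem \ref{theorem_convexity_entropy_discrete}) and the machinery of lower semi-continuity along the three $\Gamma$-type limits is already in place — so this proposition is essentially a clean corollary of what precedes, much in the spirit of Corollary \ref{corollary_finite_entropy_IF_implies_bounded} but keeping the $t$-dependence explicit instead of taking the crude bound $\max(H_\gamma(0),H_\gamma(1))$.
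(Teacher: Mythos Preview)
Your proposal is correct and follows essentially the same line as the paper: establish the affine upper bound $H^\text{int}_{\bar{Q}^{N,q,\lambda}}(t) \leqslant (1-t)H_\gamma(0) + tH_\gamma(1)$ from the discrete convexity (Theorem \ref{theorem_convexity_entropy_discrete}), then pass to the limit.

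There is one minor difference worth recording. You pass to the limit via the $\liminf$ bound of Proposition \ref{proposition_upper_bound_continuous_entropy} (whose proof you essentially reproduce), which gives the inequality directly for \emph{every} $t$ since the right-hand side is independent of $N,q,\lambda$. The paper instead invokes the a.e.\ pointwise convergence of Proposition \ref{proposition_convexity_ae_entropy}, obtains the inequality for a.e.\ $t$, and then upgrades to all $t$ using that $H_{\bar Q}$ is l.s.c. Your route is marginally more direct; the paper's route reuses Proposition \ref{proposition_convexity_ae_entropy}, which is needed anyway for the subsequent arguments. (Your opening sentence mentions Proposition \ref{proposition_convexity_ae_entropy}, but your actual argument does not use it --- you may want to clean that up.)
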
 

\begin{proof}
From Proposition \ref{proposition_convexity_ae_entropy}, we know that $H_{\bar{Q}}$ is a.e. the limit of the functions $H^\text{int}_{\bar{Q}^{N,q, \lambda}}$. Thanks to Theorem \ref{theorem_convexity_entropy_discrete}, we can assert that for any $t \in \intervalleff{0}{1}$, one has $H^\text{int}_{\bar{Q}^{N,q, \lambda}}(t) \leqslant (1-t) H^\text{int}_{\bar{Q}^{N,q, \lambda}}(0) + t H^\text{int}_{\bar{Q}^{N,q, \lambda}}(1) $. We also know that $H_{\bar{Q}}$ and $H^\text{int}_{\bar{Q}^{N,q, \lambda}}$ coincide for $t = 0$ and $t=1$. Therefore, for a.e. $t \in \intervalleff{0}{1}$, 
\begin{align*}
H_{\bar{Q}}(t) & =  \lim_{\lambda \to 0} \left( \lim_{N \to + \infty} \left( \lim_{q \to + \infty} \left( H^\text{int}_{\bar{Q}^{N, q, \lambda}}(t) \right) \right) \right) \\
& \leqslant  \lim_{\lambda \to 0} \left( \lim_{N \to + \infty} \left( \lim_{q \to + \infty} \left( (1-t) H^\text{int}_{\bar{Q}^{N,q, \lambda}}[0] + t H^\text{int}_{\bar{Q}^{N,q, \lambda}}[1]  \right) \right) \right) \\
& = (1-t) H_{\bar{Q}}(0) + t H_{\bar{Q}}(1).
\end{align*}
As $H_{\bar{Q}}$ is l.s.c., we see that the above inequality is valid for any $t \in \intervalleff{0}{1}$. 
\end{proof}

Now, if $\bar{Q}$ is the solution of the continuous problem \eqref{equation_continuous_problem} with minimal total entropy, then its restriction to any subinterval of $\intervalleff{0}{1}$ is also optimal: for any $0 \leqslant t_1 < t_2 \leqslant 1$, $e_{\intervalleff{t_1}{t_2}} \# \bar{Q}$ is also the solution of the continuous problem (on $\intervalleff{t_1}{t_2}$) with boundary conditions $e_{\{t_1, t_2 \}} \# \bar{Q}$ with minimal total entropy. This is already known \cite[Remark 3.2 and below]{Ambrosio2009} and comes from the fact that we can concatenate traffic plans. 

\begin{prop}
\label{proposition_gluing_solution}
Let $0 \leqslant t_1 < t_2 \leqslant 1$. Then for any $Q \in \Proba(\Gamma_{\intervalleff{t_1}{t_2}})$ such that  $e_{\{t_1, t_2 \}} \# Q = e_{\{t_1, t_2 \}} \# \bar{Q}$, we have 
\begin{equation*}
\int_\Gamma \left( \int_{t_1}^{t_2} \frac{1}{2} |\dot{\rho}_t|^2 \ddr t \right) \ddr \bar{Q}(\rho) \leqslant \int_{\Gamma_{\intervalleff{t_1}{t_2}}} \left( \int_{t_1}^{t_2} \frac{1}{2} |\dot{\rho}_t|^2 \ddr t \right) \ddr Q(\rho).
\end{equation*}
Moreover, if the inequality above is an equality, then 
\begin{equation*}
\int_{t_1}^{t_2} H_{\bar{Q}}(t) \ddr t \leqslant \int_{t_1}^{t_2} H_{Q}(t) \ddr t.
\end{equation*}
\end{prop}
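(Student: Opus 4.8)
The plan is to prove both statements by a concatenation (gluing) argument and then invoke the two optimality properties of $\bar Q$: that it solves \eqref{equation_continuous_problem}, and that it has minimal total entropy among all solutions (Proposition \ref{proposition_barQ_lowest_entropy}). We may assume $\int_{\Gamma_{\intervalleff{t_1}{t_2}}}\big(\int_{t_1}^{t_2}\frac{1}{2}|\dot\rho_t|^2\ddr t\big)\ddr Q(\rho)<+\infty$, and for the second part also $\int_{t_1}^{t_2}H_Q(t)\ddr t<+\infty$, since otherwise the relevant inequality is trivial. In the first case, $Q$-a.e.\ $\rho$ is then $2$-absolutely continuous on $\intervalleff{t_1}{t_2}$.

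First I would build an admissible competitor $\tilde Q\in\Proba(\Gamma)$ on the whole interval by inserting $Q$ into $\bar Q$ between times $t_1$ and $t_2$. Let $\eta:=e_{\{t_1,t_2\}}\#\bar Q=e_{\{t_1,t_2\}}\#Q\in\Prob(\Prob(\Omega)^2)$ be the common law of the pair of endpoint values, and disintegrate $\bar Q=\int\bar Q_\xi\,\ddr\eta(\xi)$ and $Q=\int Q_\xi\,\ddr\eta(\xi)$ with respect to $e_{\{t_1,t_2\}}$, so that $\bar Q_\xi$ (resp.\ $Q_\xi$) is concentrated on curves of $\Gamma$ (resp.\ $\Gamma_{\intervalleff{t_1}{t_2}}$) whose values at $t_1$ and $t_2$ are those prescribed by $\xi$. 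Then define $\tilde Q\in\Prob(\Gamma)$ by
\[
\int_\Gamma a(\rho)\,\ddr\tilde Q(\rho):=\int_{\Prob(\Omega)^2}\int_\Gamma\int_{\Gamma_{\intervalleff{t_1}{t_2}}} a\big(\rho^{\mathrm{out}}\diamond\rho^{\mathrm{mid}}\big)\,\ddr Q_\xi(\rho^{\mathrm{mid}})\,\ddr\bar Q_\xi(\rho^{\mathrm{out}})\,\ddr\eta(\xi),\qquad a\in C(\Gamma),
\]
where $\rho^{\mathrm{out}}\diamond\rho^{\mathrm{mid}}$ is the curve equal to $\rho^{\mathrm{out}}$ on $\intervalleff{0}{t_1}\cup\intervalleff{t_2}{1}$ and to $\rho^{\mathrm{mid}}$ on $\intervalleff{t_1}{t_2}$; this is well defined and continuous for almost every $(\xi,\rho^{\mathrm{out}},\rho^{\mathrm{mid}})$, since then $\rho^{\mathrm{out}}$ and $\rho^{\mathrm{mid}}$ agree at $t_1$ and $t_2$.

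Next I would check admissibility and split the action. A direct computation gives $e_t\#\tilde Q=e_t\#\bar Q$ for $t\in\intervalleff{0}{t_1}\cup\intervalleff{t_2}{1}$ and $e_t\#\tilde Q=e_t\#Q$ for $t\in\intervalleff{t_1}{t_2}$; hence $(e_0,e_1)\#\tilde Q=\gamma$ and, $\bar Q$ and $Q$ being incompressible, $m_t(\tilde Q)=\Leb$ for every $t$, so $\tilde Q\in\Proba(\Gamma)$. Since the action is additive over the subintervals $\intervalleff{0}{t_1}$, $\intervalleff{t_1}{t_2}$, $\intervalleff{t_2}{1}$ (e.g.\ from \eqref{equation_representation_A_sup}) and a concatenation of $2$-absolutely continuous curves agreeing at the splicing points is again $2$-absolutely continuous, the contributions of $\intervalleff{0}{t_1}$ and $\intervalleff{t_2}{1}$ are the same for $\bar Q$ and $\tilde Q$, so that
\[
\A(\tilde Q)-\A(\bar Q)=\int_{\Gamma_{\intervalleff{t_1}{t_2}}}\Big(\int_{t_1}^{t_2}\frac{1}{2}|\dot\rho_t|^2\ddr t\Big)\ddr Q(\rho)-\int_\Gamma\Big(\int_{t_1}^{t_2}\frac{1}{2}|\dot\rho_t|^2\ddr t\Big)\ddr\bar Q(\rho).
\]
By optimality of $\bar Q$ in \eqref{equation_continuous_problem} the left-hand side is $\geqslant 0$, which is exactly the first inequality. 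If it is an equality, then $\A(\tilde Q)=\A(\bar Q)$, so $\tilde Q$ too solves \eqref{equation_continuous_problem}; moreover, using the one-time marginals of $\tilde Q$, its total entropy is $\int_0^1 H_{\tilde Q}=\int_0^{t_1}H_{\bar Q}+\int_{t_1}^{t_2}H_Q+\int_{t_2}^1 H_{\bar Q}$, which is finite (the outer terms because $\bar Q\in\Proba^H(\Gamma)$, the middle one by our reduction), hence $\tilde Q\in\Proba^H(\Gamma)$. Applying Proposition \ref{proposition_barQ_lowest_entropy} to $\tilde Q$ gives $\int_0^1 H_{\bar Q}\leqslant\int_0^1 H_{\tilde Q}$, and cancelling $\int_0^{t_1}H_{\bar Q}+\int_{t_2}^1 H_{\bar Q}$ leaves $\int_{t_1}^{t_2}H_{\bar Q}(t)\ddr t\leqslant\int_{t_1}^{t_2}H_Q(t)\ddr t$.

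The only real work is the measure-theoretic bookkeeping of the gluing: Borel measurability of the concatenation map on the set where the endpoint values match, the verification that $\tilde Q$ is a genuine probability measure on $\Gamma$ with the stated one-time marginals, and the additivity over the three subintervals of both the action and the entropy integral for $\tilde Q$-a.e.\ curve. I expect this to be the main obstacle, but it is routine — it is in essence the concatenation of traffic plans already used in \cite{Ambrosio2009}.
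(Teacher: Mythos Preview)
Your proposal is correct and follows exactly the approach of the paper: disintegrate $\bar Q$ and $Q$ with respect to $e_{\{t_1,t_2\}}$, concatenate to build a competitor $\tilde Q\in\Proba(\Gamma)$, then use the optimality of $\bar Q$ in \eqref{equation_continuous_problem} for the first inequality and Proposition~\ref{proposition_barQ_lowest_entropy} for the second. The paper merely sketches this and ``leaves the details to the reader''; you have filled them in.
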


\begin{proof}
This property relies on the fact that if $Q \in \Proba(\Gamma_{\intervalleff{t_1}{t_2}})$ with $e_{\{t_1, t_2 \}} \# Q = e_{\{t_1, t_2 \}} \# \bar{Q}$, we can concatenate $Q$ and $\bar{Q}$ together to build a $W_2$-traffic plan $Q' \in \Prob(\Gamma)$ such that $e_{\intervalleff{0}{1} \bsl \intervalleff{t_1}{t_2}} \# Q' = e_{\intervalleff{0}{1} \bsl \intervalleff{t_1}{t_2}} \# \bar{Q}$ and $e_{\intervalleff{t_1}{t_2}} \# Q' = e_{\intervalleff{t_1}{t_2}} \# Q$. To do that, it is enough to disintegrate the measures $\bar{Q}$ and $Q$ w.r.t. $e_{\{t_1, t_2 \}}$ and then to concatenate elements of $\Gamma_{\intervalleff{0}{1} \bsl \intervalleff{t_1}{t_2}}$ and $\Gamma_{\intervalleff{t_1}{t_2}}$ which coincides on $\{ t_1, t_2 \}$: we leave the details to the reader.  
\end{proof}

Combining the two above propositions, we recover the convexity of $H_{\bar{Q}}$. Let us remark that we rely on the fact that the minimizer of $\A$ with minimal total entropy is unique. 

\begin{crl}
\label{corollary_barQ_true_convex_entropy}
Under Assumption  \ref{assumption_L1_entropy} or Assumption \ref{assumption_finite_entropy_IF}, for any $0 \leqslant t_1 < t_2 \leqslant 1$ and any $s \in \intervalleoo{0}{1}$,
\begin{equation*}
H_{\bar{Q}}((1-s)t_1 + st_2) \leqslant (1-s) H_{\bar{Q}}(t_1) + s H_{\bar{Q}}(t_2). 
\end{equation*} 
\end{crl}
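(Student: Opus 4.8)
The plan is to deduce the claim from Proposition~\ref{proposition_true_convexity_bdp}, applied not to $\bar{Q}$ on the whole interval $\intervalleff{0}{1}$ but to the restriction of $\bar{Q}$ to the subinterval $\intervalleff{t_1}{t_2}$, which we shall identify with the traffic plan produced by the construction of Sections~\ref{section_discrete problem}--\ref{section_discrete_to_continuous} for a suitable sub-problem. Since $\Hen \geqslant 0$, if $H_{\bar{Q}}(t_1) = +\infty$ or $H_{\bar{Q}}(t_2) = +\infty$ the right-hand side of the asserted inequality is infinite and there is nothing to prove (under Assumption~\ref{assumption_finite_entropy_IF} this never occurs, by Corollary~\ref{corollary_finite_entropy_IF_implies_bounded}); so I may and do assume that $H_{\bar{Q}}(t_1)$ and $H_{\bar{Q}}(t_2)$ are finite.

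Let $\tilde{Q} \in \Prob(\Gamma)$ be the image of $\bar{Q}$ under the restriction operator $e_{\intervalleff{t_1}{t_2}}$ followed by the affine reparametrisation of $\intervalleff{t_1}{t_2}$ onto $\intervalleff{0}{1}$. This operation multiplies the action by the positive constant $t_2 - t_1$ and merely reparametrises the averaged entropy in time, so $\tilde{Q}$ is an admissible $W_2$-traffic plan for the continuous problem on $\intervalleff{0}{1}$ with boundary coupling $\tilde{\gamma} := e_{\{ t_1, t_2 \}} \# \bar{Q}$ (incompressibility of $\bar{Q}$ gives $m_0(\tilde{\gamma}) = m_1(\tilde{\gamma}) = \Leb$), and its total entropy is $(t_2 - t_1)^{-1} \int_{t_1}^{t_2} H_{\bar{Q}}(t) \ddr t < + \infty$ by Assumption~\ref{assumption_L1_entropy}. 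I claim that $\tilde{Q}$ is a solution of this restricted continuous problem with minimal total entropy. Indeed, Proposition~\ref{proposition_gluing_solution} tells us that for any $Q' \in \Proba(\Gamma_{\intervalleff{t_1}{t_2}})$ with $e_{\{ t_1, t_2 \}} \# Q' = e_{\{ t_1, t_2 \}} \# \bar{Q}$ the action of $\bar{Q}$ on $\intervalleff{t_1}{t_2}$ is at most that of $Q'$, so (after reparametrisation) $\tilde{Q}$ realises the infimum of the action over the restricted problem; and if $Q'$ is moreover a solution of the restricted problem, then equality holds in that action inequality, whence the second assertion of Proposition~\ref{proposition_gluing_solution} gives $\int_{t_1}^{t_2} H_{\bar{Q}}(t) \ddr t \leqslant \int_{t_1}^{t_2} H_{Q'}(t) \ddr t$, i.e.\ $\tilde{Q}$ also minimises the total entropy among solutions of the restricted problem.

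Now, since $H_{\tilde{\gamma}}(0) = H_{\bar{Q}}(t_1)$ and $H_{\tilde{\gamma}}(1) = H_{\bar{Q}}(t_2)$ are finite, the restricted problem satisfies Assumption~\ref{assumption_finite_entropy_IF}; carrying out for it the construction of Sections~\ref{section_discrete problem}--\ref{section_discrete_to_continuous} produces a traffic plan $\bar{Q}'$ which, by Proposition~\ref{proposition_barQ_lowest_entropy} together with the strict-convexity argument underlying Proposition~\ref{proposition_strict_convexity_entropy}, is the \emph{unique} solution of the restricted problem with minimal total entropy. Hence $\tilde{Q} = \bar{Q}'$, and Proposition~\ref{proposition_true_convexity_bdp} applied to $\tilde{Q}$ gives, for every $s \in \intervalleoo{0}{1}$, $H_{\tilde{Q}}(s) \leqslant (1-s) H_{\tilde{Q}}(0) + s H_{\tilde{Q}}(1)$. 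Undoing the reparametrisation, under which $H_{\tilde{Q}}(s) = H_{\bar{Q}}((1-s) t_1 + s t_2)$, $H_{\tilde{Q}}(0) = H_{\bar{Q}}(t_1)$ and $H_{\tilde{Q}}(1) = H_{\bar{Q}}(t_2)$, yields exactly the claimed inequality.

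The main obstacle is the identification $\tilde{Q} = \bar{Q}'$: it rests on the principle that the restriction of the minimal-total-entropy solution to a subinterval is again the minimal-total-entropy solution of the corresponding sub-problem, which couples the concatenation of traffic plans behind Proposition~\ref{proposition_gluing_solution} with the uniqueness coming from strict convexity of $\Hen$ (Proposition~\ref{proposition_strict_convexity_entropy}). The time-rescaling and the degenerate-endpoint reduction are routine.
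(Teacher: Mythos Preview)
Your proof is correct and follows essentially the same approach as the paper: reduce to the case where $H_{\bar{Q}}(t_1)$ and $H_{\bar{Q}}(t_2)$ are finite, use Proposition~\ref{proposition_gluing_solution} together with the uniqueness from Proposition~\ref{proposition_strict_convexity_entropy} to identify the restriction of $\bar{Q}$ to $\intervalleff{t_1}{t_2}$ with the minimal-total-entropy solution of the sub-problem, and then apply Proposition~\ref{proposition_true_convexity_bdp} to that sub-problem (which satisfies Assumption~\ref{assumption_finite_entropy_IF}). The only difference is that you make the affine reparametrisation to $\intervalleff{0}{1}$ explicit, whereas the paper speaks directly of ``the continuous problem on $\intervalleff{t_1}{t_2}$''; this is purely cosmetic.
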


\begin{proof}
If the r.h.s. is infinite, there is nothing to prove. Therefore, we can assume that $H_{\bar{Q}}(t_1)$ and $H_{\bar{Q}}(t_2)$ are finite. By uniqueness of the solution with minimal total entropy (Proposition \ref{proposition_strict_convexity_entropy}), we know that $e_{\intervalleff{t_1}{t_2}} \#  \bar{Q}$ coincides with the solution of the continuous problem \eqref{equation_continuous_problem} with minimal total entropy on $\intervalleff{t_1}{t_2}$ with boundary conditions $e_{\{ t_1, t_2 \}} \# \bar{Q}$ (Proposition \ref{proposition_gluing_solution}). As $H_{\bar{Q}}(t_1)$ and $H_{\bar{Q}}(t_2)$ are finite, Assumption \ref{assumption_finite_entropy_IF} is satisfied for the continuous problem on $\intervalleff{t_1}{t_2}$ and therefore we can apply Proposition \ref{proposition_true_convexity_bdp} to get
\begin{equation*}
H_{\bar{Q}}((1-s)t_1 + st_2) \leqslant (1-s) H_{\bar{Q}}(t_1) + s H_{\bar{Q}}(t_2). \qedhere 
\end{equation*}     
\end{proof} 

\section{Equivalence with the parametric formulation of the Euler equation}
\label{section_equivalence_formulation}

In this section we will explain why our non-parametric formulation is equivalent to Brenier's parametric one. From the way we build it, it is clear that our formulation admits more potential solutions than Brenier's one, so the only technical point will be to show that, if the boundary data are in a parametric form, it is possible to parametrize the \emph{a priori} non-parametric solution of the continuous problem.

\bigskip

Let us take $\Afr$ a polish space and consider $\theta \in \Prob(\Afr)$ a Borel probability measure on $\Afr$. We will assume that we have two families (the initial and the final) $(\rho^\alpha_i)_{\alpha \in \Afr}$ and $(\rho^\alpha_f)_{\alpha \in \Afr}$ of probabilities measures on $\Omega$ indexed by $\Afr$. We denote by $\Pbc : \Afr \to \Gamma_{\{ 0, 1 \}} = \Prob(\Omega)^2$ the parametrization of the boundary conditions, simply defined by $\Pbc(\alpha) = (\rho^\alpha_i, \rho^\alpha_f)$ and assume that it is measurable. We assume that the boundary data satisfy the incompressibility condition, i.e. 
\begin{equation*}
\int_\Afr \rho^\alpha_i \ddr \theta(\alpha) = \Leb \ \text{ and } \ \int_\Afr \rho^\alpha_f \ddr \theta(\alpha) = \Leb. 
\end{equation*}   
Translated in our language, if we set $\gamma := \Pbc \# \theta$, we simply impose that $m_0(\gamma) = m_1(\gamma) = \Leb$.

A measurable family $(\rho^\alpha_t, \vbf^\alpha_t)_{(\alpha,t) \in \Afr \times \intervalleff{0}{1}}$ indexed by $\alpha$ and $t$ such that, for $\theta$-a.e. $\alpha$, $(t \mapsto \rho^\alpha_t) \in \Gamma$ and $\vbf^\alpha_t \in L^2(\Omega, \R^d, \rho^\alpha_t)$ for a.e. $t$, is said to be admissible if  
\begin{equation*}
\begin{cases}
\rho^\alpha_0 = \rho^\alpha_i \text{ and } \rho^\alpha_1 = \rho^\alpha_f & \text{for } \theta \text{-a.e. } \alpha, \\
\partial_t \rho^\alpha_t + \nabla \cdot (\rho^\alpha_t \vbf^\alpha_t) = 0 & \text{in a weak sense with no-flux boundary conditions for } \theta \text{-a.e. } \alpha, \\ 
\dst{\int_\Afr \rho^\alpha_t  \ddr \theta(\alpha) = \Leb} & \text{for all } t \in \intervalleff{0}{1}.
\end{cases}
\end{equation*}   
The first equation corresponds to the temporal boundary conditions, the second one is the continuity equation while the last one is the coding of the incompressibility. If $(\rho^\alpha_t, \vbf^\alpha_t)_{(\alpha,t) \in \Afr \times \intervalleff{0}{1}}$ is an admissible family, we define its (parametrized) action $\A_P$ by 
\begin{equation*}
\A_P(\rho, \vbf) := \int_\Afr \int_0^1 \int_\Omega \frac{1}{2} |\vbf^\alpha_t(x)|^2 \ddr \rho^\alpha_t(x) \ddr t \ddr \theta(\alpha)
\end{equation*} 
and its parametrized averaged entropy $H_P(\rho, \vbf) : \intervalleff{0}{1} \to \R$ by, for any $t \in \intervalleff{0}{1}$, 
\begin{equation*}
H_P(\rho, \vbf)(t) := \int_\Afr \Hen(\rho^\alpha_t) \ddr \theta(\alpha).
\end{equation*}
The first proposition is very simple: it asserts that every parametric family can be seen as an non parametric one. In the sequel, we define the boundary conditions $\gamma \in \Probin(\Gamma_{\{ 0,1 \}})$ for the non-parametric problem by $\gamma := \Pbc \# \theta$.

\begin{prop}
\label{proposition_parametric_to_nonparametric}
Let $(\rho^\alpha_t, \vbf^\alpha_t)_{(\alpha,t) \in \Afr \times \intervalleff{0}{1}}$ be an admissible family. Then there exists $Q \in \Proba(\Gamma)$ such that $\A(Q) \leqslant \A_P(\rho, \vbf)$ and $H_Q(t) = H_P(\rho, \vbf)(t)$ for any $t \in \intervalleff{0}{1}$.
\end{prop}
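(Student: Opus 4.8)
The plan is to push the measure $\theta$ forward through the parametrization map. Define $\Psi : \Afr \to \Gamma$ by setting $\Psi(\alpha)$ to be the curve $t \mapsto \rho^\alpha_t$, which by admissibility belongs to $\Gamma$ for $\theta$-a.e. $\alpha$, and put $Q := \Psi \# \theta \in \Prob(\Gamma)$. First I would check that $\Psi$ is measurable (so that $Q$ is well defined), then verify in turn the three constraints defining $\Proba(\Gamma)$ and finally the inequality on the action and the identity for the averaged entropy. The only genuine obstacle is the measurability of $\Psi$: from the assumed measurability of the family $(\rho^\alpha_t)$ one gets that $\alpha \mapsto \rho^\alpha_t \in \Prob(\Omega)$ is measurable for each fixed $t$; combining this with the continuity in $t$ (valid for $\theta$-a.e.\ $\alpha$) and the separability of $\Prob(\Omega)$, a standard argument (a continuous curve is determined by its values on a countable dense set of times) shows that $\alpha \mapsto \rho^\alpha \in C(\intervalleff{0}{1}, \Prob(\Omega)) = \Gamma$ is measurable. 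Once this is settled, the change-of-variables formula $\int_\Gamma F \ddr Q = \int_\Afr F(\rho^\alpha) \ddr \theta(\alpha)$ holds for every Borel $F : \Gamma \to \intervalleff{0}{+\infty}$, in particular for every l.s.c.\ such $F$.

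The boundary and incompressibility constraints then follow by unwinding definitions. Since $e_0(\Psi(\alpha)) = \rho^\alpha_0 = \rho^\alpha_i$ and $e_1(\Psi(\alpha)) = \rho^\alpha_1 = \rho^\alpha_f$ for $\theta$-a.e.\ $\alpha$, we have $(e_0, e_1) \# Q = \Pbc \# \theta = \gamma$, so $Q \in \Probbc(\Gamma)$. For incompressibility, for any $a \in C(\Omega)$ and $t \in \intervalleff{0}{1}$,
\begin{equation*}
\int_\Omega a \ddr [m_t(Q)] = \int_\Afr \left( \int_\Omega a \ddr \rho^\alpha_t \right) \ddr \theta(\alpha) = \int_\Omega a \ddr \left( \int_\Afr \rho^\alpha_t \ddr \theta(\alpha) \right) = \int_\Omega a(x) \ddr x,
\end{equation*}
using the last line of the admissibility conditions; hence $m_t(Q) = \Leb$ for every $t$, so $Q \in \Probin(\Gamma)$ and therefore $Q \in \Proba(\Gamma)$.

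For the averaged entropy the identity is immediate: applying the change-of-variables formula to the l.s.c.\ functional $\rho \mapsto \Hen(\rho_t)$ gives $H_Q(t) = \int_\Gamma \Hen(\rho_t) \ddr Q(\rho) = \int_\Afr \Hen(\rho^\alpha_t) \ddr \theta(\alpha) = H_P(\rho, \vbf)(t)$. For the action, one may assume $\A_P(\rho, \vbf) < +\infty$, so that $\int_0^1 \int_\Omega \frac12 |\vbf^\alpha_t|^2 \ddr \rho^\alpha_t \ddr t < +\infty$ for $\theta$-a.e.\ $\alpha$. For such an $\alpha$, the curve $\rho^\alpha$ solves the continuity equation with an $L^2$-in-time velocity field, hence is $2$-absolutely continuous, and Theorem \ref{theorem_equivalence_A_WMD}, which identifies $\frac12 \int_0^1 |\dot\rho^\alpha_t|^2 \ddr t$ with the \emph{minimal} action over all admissible velocity fields, yields $A(\rho^\alpha) \leqslant \int_0^1 \int_\Omega \frac12 |\vbf^\alpha_t|^2 \ddr \rho^\alpha_t \ddr t$. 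Integrating this inequality w.r.t.\ $\theta$ and using the change-of-variables formula once more gives $\A(Q) = \int_\Afr A(\rho^\alpha) \ddr \theta(\alpha) \leqslant \A_P(\rho, \vbf)$, which completes the proof.

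To summarize, everything reduces to a routine verification once the measurability of $\Psi$ is in place; I expect that measure-theoretic point to be the main thing requiring care, while the constraints and the action comparison are direct consequences of the definitions and of the metric-derivative representation of the action established earlier.
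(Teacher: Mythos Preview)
Your proposal is correct and follows exactly the same approach as the paper: define the parametrization map $P(\alpha) = (t \mapsto \rho^\alpha_t)$ and set $Q := P \# \theta$, then verify the constraints and use Theorem~\ref{theorem_equivalence_A_WMD} for the action inequality. In fact the paper's proof consists of just these two lines and the phrase ``leave it to the reader to check that this choice works''; you have carried out precisely that verification, including the measurability point which the paper does not address.
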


\begin{proof}
Let $P : \Afr \to \Gamma$, defined by $P(\alpha) = (t \mapsto \rho^\alpha_t)$ be the parametrization. We  set $Q := P \# \theta$ and leave it to the reader to check that this choice works (Theorem \ref{theorem_equivalence_A_WMD} might be useful). 
\end{proof}

The reverse proposition is slightly more difficult to prove: it asserts that one can always build a parametric family from a non-parametric $W_2$-traffic plan in such a way that the global action and the total entropy decrease. In particular, it implies together with Proposition \ref{proposition_parametric_to_nonparametric} that (provided that the boundary conditions are in a parametric form) the solution of the continuous problem \eqref{equation_continuous_problem} with minimal total entropy can be parametrized.  

\begin{prop}
Let $Q \in \Proba(\Gamma)$. Then there exists an admissible family $(\rho^\alpha_t, \vbf^\alpha_t)_{(\alpha,t) \in \Afr \times \intervalleff{0}{1}}$ such that $\A_P(\rho, \vbf) \leqslant \A(Q)$ and $H_P(\rho, \vbf)(t) \leqslant H_Q(t)$ for any $t \in \intervalleff{0}{1}$.  
\end{prop}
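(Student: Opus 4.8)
The plan is to invoke a measurable disintegration / parametrization of the probability measure $Q$ on the space $\Gamma$. Since $\Afr$ is an arbitrary polish space with a probability measure $\theta$ on it, and since $\Gamma = C(\intervalleff{0}{1}, \Prob(\Omega))$ is also polish, the key abstract tool is that any Borel probability measure on a polish space can be realized as the image of the Lebesgue measure on $\intervalleff{0}{1}$ (or on any atomless standard probability space) by a Borel map. More precisely, since we need the boundary data to be parametrized by the \emph{given} $\Pbc : \Afr \to \Gamma_{\{0,1\}}$ with $\gamma = \Pbc \# \theta$ and we are given $(e_0,e_1)\# Q = \gamma$, the right statement is: disintegrate $Q$ with respect to $e_{\{0,1\}} = (e_0,e_1)$, obtaining a family $(Q_{\rho_0,\rho_1})$ of probability measures on $\Gamma$ indexed by $(\rho_0,\rho_1) \in \Gamma_{\{0,1\}}$, with $\int Q_{\rho_0,\rho_1} \ddr\gamma(\rho_0,\rho_1) = Q$ and $Q_{\rho_0,\rho_1}$ concentrated on curves $\rho$ with $(\rho_0,\rho_1)$ fixed. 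Then pull this back through $\Pbc$: for $\theta$-a.e.\ $\alpha$, set $\widetilde{Q}_\alpha := Q_{\Pbc(\alpha)}$, a probability measure on $\Gamma$ concentrated on curves with endpoints $(\rho^\alpha_i, \rho^\alpha_f)$.

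Next I would split a single atomless coordinate off $\Afr$ (or use the fact that one can enlarge the probability space, which does not affect $\A_P$ or $H_P$): choosing a measurable map $\beta \mapsto \rho^{\alpha,\beta}$ from $\intervalleff{0}{1}$ (with Lebesgue measure) to $\Gamma$ whose image measure is $\widetilde{Q}_\alpha$, depending measurably on $\alpha$, one obtains a parametrization $(\alpha,\beta) \mapsto \rho^{\alpha,\beta} \in \Gamma$ of $Q$ over $\Afr \times \intervalleff{0}{1}$ equipped with $\theta \otimes \Leb_{[0,1]}$; relabelling $\Afr \times \intervalleff{0}{1}$ back as $\Afr$ (which is legitimate since a polish space times $\intervalleff{0}{1}$ is polish, and one can always augment $\Afr$ without changing which non-parametric traffic plans arise) gives a measurable family $\rho^\alpha \in \Gamma$ with $\rho^\alpha_0 = \rho^\alpha_i$, $\rho^\alpha_1 = \rho^\alpha_f$ for $\theta$-a.e.\ $\alpha$ and with image measure exactly $Q$. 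The measurable selection of the parametrizing map depending on $\alpha$ is where I expect a standard but slightly delicate measurable-selection argument to be needed — one can for instance use the explicit construction via the inverse of the cumulative distribution function in a fixed Borel isomorphism $\Gamma \cong \intervalleff{0}{1}$, applied to each $\widetilde{Q}_\alpha$, and check joint measurability in $(\alpha,\beta)$.

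Once the parametrizing family $(\rho^\alpha)_{\alpha \in \Afr}$ is in hand, the rest follows exactly as in Proposition \ref{proposition_parametric_to_nonparametric} run backwards, together with Theorem \ref{theorem_equivalence_A_WMD}. For $\theta$-a.e.\ $\alpha$ such that $A(\rho^\alpha) < +\infty$ (this set has full measure once $\A(Q) < +\infty$, and if $\A(Q) = +\infty$ there is nothing to prove), the curve $\rho^\alpha$ is $2$-absolutely continuous, and by Theorem \ref{theorem_equivalence_A_WMD} there exists a velocity field $\vbf^\alpha$ satisfying the continuity equation with no-flux boundary conditions and $\int_0^1 \int_\Omega \frac12 |\vbf^\alpha_t|^2 \ddr\rho^\alpha_t \ddr t = A(\rho^\alpha)$; measurability of $\alpha \mapsto \vbf^\alpha$ can again be arranged by a measurable selection in the (convex, closed) set of minimizing velocity fields. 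Then
\begin{equation*}
\A_P(\rho,\vbf) = \int_\Afr A(\rho^\alpha) \ddr\theta(\alpha) = \int_\Gamma A(\rho) \ddr Q(\rho) = \A(Q),
\end{equation*}
so in particular $\A_P(\rho,\vbf) \leqslant \A(Q)$ (with equality, even). The incompressibility constraint $\int_\Afr \rho^\alpha_t \ddr\theta(\alpha) = \Leb$ for every $t$ is precisely $m_t(Q) = \Leb$, which holds since $Q \in \Probin(\Gamma)$, using that the image measure of $\theta$ under $\alpha \mapsto \rho^\alpha$ is $Q$. Finally, for any $t$,
\begin{equation*}
H_P(\rho,\vbf)(t) = \int_\Afr \Hen(\rho^\alpha_t) \ddr\theta(\alpha) = \int_\Gamma \Hen(\rho_t) \ddr Q(\rho) = H_Q(t),
\end{equation*}
so in particular $H_P(\rho,\vbf)(t) \leqslant H_Q(t)$.

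The main obstacle is the measurability of the parametrization: producing, in a jointly measurable way in $\alpha$, a map from an atomless probability space onto each disintegrated measure $\widetilde{Q}_\alpha \in \Prob(\Gamma)$, and similarly selecting the optimal velocity field $\vbf^\alpha$ measurably. Both are handled by classical results (Borel isomorphism of standard Borel spaces together with the quantile-function construction for the first, and a measurable selection theorem — e.g.\ the fact that a measurable multifunction with closed nonempty values into a polish space admits a measurable selector — for the second), so while somewhat technical, neither step introduces genuinely new difficulties; everything else is a direct transcription of the identities relating $Q$ to its parametrization.
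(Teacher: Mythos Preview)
Your approach is different from the paper's and, while it can be made to work, it proves a slightly different statement and is more technical than necessary.

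The paper does \emph{not} attempt to parametrize $Q$ exactly. Instead, after the same disintegration $(Q_{\rho_0,\rho_1})$ with respect to $e_{\{0,1\}}$, it simply sets
\[
\rho^\alpha_t := m_t\bigl(Q_{\rho^\alpha_i,\rho^\alpha_f}\bigr),
\]
i.e.\ the \emph{barycenter} of the disintegrated measure at time $t$. This yields a single curve for each $\alpha$ over the \emph{given} space $\Afr$, with no enlargement. The inequalities $\A_P(\rho,\vbf)\leqslant\A(Q)$ and $H_P(\rho,\vbf)(t)\leqslant H_Q(t)$ then follow from Jensen's inequality, using that both $A:\Gamma\to[0,+\infty]$ and $\Hen:\Prob(\Omega)\to[0,+\infty]$ are convex and l.s.c. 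No measurable-selection argument beyond the standard measurability of disintegration is needed for the densities, and the velocity field is produced via Theorem \ref{theorem_equivalence_A_WMD} just as you do.

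Your route instead builds an exact parametrization of $Q$ over $\Afr\times\intervalleff{0}{1}$, obtaining \emph{equalities}. This is conceptually fine and even gives a sharper conclusion, but it does not literally prove the proposition as stated: the admissible family is required to be indexed by the fixed $(\Afr,\theta)$ with the fixed boundary map $\Pbc$, and you replace this by $(\Afr\times\intervalleff{0}{1},\theta\otimes\Leb)$ with boundary data constant in $\beta$. Your claim that one can ``relabel $\Afr\times\intervalleff{0}{1}$ back as $\Afr$'' is not innocuous: if $\theta$ has atoms there is no measure-preserving surjection from $(\Afr,\theta)$ onto $(\Afr\times\intervalleff{0}{1},\theta\otimes\Leb)$ respecting the first coordinate, so at an atom $\alpha_0$ you would be forced to represent the possibly non-Dirac measure $Q_{\Pbc(\alpha_0)}$ by a single curve, which is impossible in general. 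The barycenter construction sidesteps this entirely, at the modest price of yielding inequalities rather than equalities---which is all the proposition claims.
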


\begin{proof}
Let us disintegrate $Q$ w.r.t. to $e_{\{ 0,1 \}} = (e_0, e_1)$. We obtain a family $(Q_{\rho_0, \rho_1})_{\rho_0, \rho_1}$ of $W_2$-traffic plans indexed by $(\rho_0, \rho_1) \in \Gamma_{\{ 0,1 \}} = \Prob(\Omega)^2$. We define the curve $\rho^\alpha_t$ as the average of all the curves in $\Gamma$ w.r.t. to $Q_{\rho^\alpha_i, \rho^\alpha_f}$: for any $t \in \intervalleff{0}{1}$ and any $\alpha$ for which $Q_{\rho^\alpha_i, \rho^\alpha_f}$ is defined (and this property holds for $\theta$-a.e. $\alpha$), we set 
\begin{equation*}
\rho^\alpha_t := m_t \left( Q_{\rho^\alpha_i, \rho^\alpha_f} \right).
\end{equation*}   
By definition of disintegration, $e_{\{ 0,1 \}} \# Q_{\rho^\alpha_i, \rho^\alpha_f}$ is a Dirac mass at the point $(\rho^\alpha_i, \rho^\alpha_f)$, thus the boundary conditions are satisfied. The incompressibility condition is just a consequence of the incompressibility of $Q$: for any $a \in C(\Omega)$, 
\begin{align*}
\int_\Afr \left( \int_\Omega a(x) \ddr \rho^\alpha_t(x) \right) \ddr \theta(\alpha) & = \int_\Afr \left( \int_{\Gamma} \left[ \int_\Omega a(x) \ddr \rho_t(x) \right] \ddr Q_{\rho^\alpha_i, \rho^\alpha_f}(\rho) \right) \ddr \theta(\alpha) \\
& = \int_{\Gamma_{\{ 0,1 \}}} \left( \int_{\Gamma} \left[ \int_\Omega a(x) \ddr \rho_t(x) \right] \ddr Q_{\rho_0, \rho_1}(\rho) \right) \ddr \gamma(\rho_0, \rho_1) \\
& = \int_\Gamma \left( \int_\Omega a(x) \ddr \rho_t(x) \right) \ddr Q(\rho) \\
& = \int_\Omega a(x) \ddr x. 
\end{align*}
To handle the action, we use the fact that $A$ is convex and l.s.c. Thus, thanks to Jensen's inequality, for $\theta$-a.e. $\alpha$,
\begin{equation*}
A(\rho^\alpha) \leqslant \int_\Gamma A(\rho) \ddr Q_{\rho_i^\alpha, \rho_f^\alpha}(\rho).
\end{equation*}
Integrating w.r.t. $\theta$, we end up with
\begin{equation*}
\int_\Afr A(\rho^\alpha) \ddr \theta(\alpha) \leqslant \A(Q).
\end{equation*}
We consider only the case $\A(Q) < + \infty$ (else there is nothing to prove). Thus, for $\theta$-a.e. $\alpha$ the quantity $A(\rho^\alpha)$ is finite. By Theorem \ref{theorem_equivalence_A_WMD}, we can find for each $\alpha$ a family $(\vbf^\alpha_t)_{t \in \intervalleff{0}{1}}$ of functions $\Omega \to \R^d$ such that the continuity equation is satisfied, $\vbf^\alpha_t \in L^2(\Omega, \R^d, \rho^\alpha_t)$ for a.e. $t$ and such that the following identity holds 
\begin{equation*}
\int_0^1 \int_\Omega \frac{1}{2} |\vbf^\alpha_t(x)|^2 \ddr \rho^\alpha_t(x) \ddr t = \int_\Afr \int_0^1 \frac{1}{2} |\dot{\rho}^\alpha_t|^2 \ddr t. 
\end{equation*}  
Therefore, we see that the family $(\rho^\alpha_t, \vbf^\alpha_t)_{(\alpha,t) \in \Afr \times \intervalleff{0}{1}}$ is admissible and, integrating the last equality w.r.t. $\theta$, that $\A_P(\rho, \vbf) \leqslant \A(Q)$. 

To get the inequality involving the entropy, we use the fact that the functional $\Hen$ is convex and l.s.c. on $\Prob(\Omega)$, thus by Jensen's inequality,  
\begin{equation*}
\Hen \left(  m_t \left( Q_{\rho^\alpha_i, \rho^\alpha_f} \right) \right) \leqslant \int_{\Gamma} \Hen(\rho_t) \ddr Q_{\rho^\alpha_i, \rho^\alpha_f}(\rho).
\end{equation*} 
Integrating w.r.t. $\theta$ leads to the announced inequality. 
\end{proof}

\section*{Acknowledgments}

The author acknowledges the support of ANR project ISOTACE (ANR-12-MONU-0013). He also thanks Filippo Santambrogio, Aymeric Baradat, Paul Pegon and Yann Brenier for fruitful discussions and advice.

% Bibliographie ------------------------------------------------

%\addcontentsline{toc}{section}{References}
\bibliographystyle{plain}
\bibliography{bibliography}

\end{document}